\UseRawInputEncoding
\documentclass[12pt]{article}
\usepackage{amsmath, amssymb, amsthm, graphicx, hyperref, array, booktabs, microtype}
\usepackage[margin=0.85in]{geometry}
\newtheorem{theorem}{Theorem}[section]
\newtheorem{definition}[theorem]{Definition}
\newtheorem{remark}[theorem]{Remark}
\newtheorem{lemma}[theorem]{Lemma}
\newtheorem{corollary}[theorem]{Corollary}
\newtheorem{proposition}[theorem]{Proposition}
\newtheorem{example}[theorem]{Example}

\hypersetup{
    colorlinks=true,
    linkcolor=black,
    citecolor=black,
    urlcolor=blue,
}

\title{Second Order Zarankiewicz Number}
\author{Johan L\"{o}fberg\footnote{Division of Automatic Control, Link\"{o}ping University, Sweden. ({\tt johan.lofberg@liu.se})}
\and
Liqun Qi\footnote{Jiangsu Provincial Scientific Research Center of Applied Mathematics, Nanjing 211189, China.
Department of Applied Mathematics, The Hong Kong Polytechnic University, Hung Hom, Kowloon, Hong Kong.
({\tt maqilq@polyu.edu.hk})}
}
\date{\today}

\begin{document}

\maketitle

\begin{abstract}
We introduce the \emph{second order Zarankiewicz number} $z_2(m,n)$ for irreducible doubly simple biquadratic forms with $|E_1|=z(m,n)$, together with the intermediate recursive-line and signed parameters $z_{RL}(m,n)$ and $z_{SL}(m,n)$. They satisfy the unconditional hierarchy
\[
\operatorname{BSR}(m,n) \ge z_2(m,n) \ge z_{SL}(m,n) \ge z_{RL}(m,n) \ge z_{wL}(m,n) \ge z(m,n),
\]
where $z_{RL}$ is defined by the strengthened recursive rectangle criterion $(RW3^+)$ together with the conditions $(S)$ and $C_4$-freeness of $G_1$, and $z_{SL}$ is defined by the signed criterion $(RW3^\pm)$. We show that $(RW3^+)$ is sound and strictly weaker than the literal weak cross-cell test $(W3)$ on the weak-admissible class, and that $(RW3^\pm)$ is likewise sound. At the smallest four-column cases we obtain
\[
z_2(5,4)=z_{SL}(5,4)=z_{RL}(5,4)=13>12=z_{wL}(5,4),
\]
and
\[
z_2(6,4)=z_{SL}(6,4)=z_{RL}(6,4)=16>14=z_{wL}(6,4).
\]
In three columns this yields
\[
z_2(m,3)=z_{RL}(m,3)=2m \qquad \text{for all } m\ge 3,
\]
with strict separation from $z_{wL}(m,3)$ for every $m\ge10$, and exact gap
$\lfloor(m-3)/3\rfloor$ for $m\ge16$. Further finite computations give
$z_{RL}(5,5)=17$, $z_{RL}(7,4)=19$, and
$z_{RL}(7,7)\ge32>28=z_{wL}(7,7)$. Along $N=2p$ with $p$ an odd prime,
we obtain the cubic asymptotic separation
\[
z_2\!\left(\binom{N}{2},N\right)-z_{wL}\!\left(\binom{N}{2},N\right)\ge \left(\frac{1}{16}-o(1)\right)N^3.
\]
For the exceptional incidence case $p=3$ we prove $z_{SL}(15,6)=z_2(15,6)=60$. Concurrent work of Chen and Chen, using the recursive-line definition introduced here, incorporates the finite recursive-line values of this manuscript and establishes further exact four-column values together with an eventual formula for all $m\ge15$. The conjectural equality $z_2=z_{RL}$ is supported by the exact two-column, three-column, four-column, and odd-prime incidence families.

\textbf{Keywords:} biquadratic form; sum of squares; SOS rank; Zarankiewicz number; second order Zarankiewicz number; irreducible doubly simple form; $C_4$-free graph

\textbf{MSC:} 14P10; 05C35; 11E25; 15A69; 90C22
\end{abstract}

\section{Introduction}

The study of biquadratic forms and their sum-of-squares (SOS) representations has deep connections to both algebraic geometry \cite{clz95} and extremal combinatorics \cite{bo04}. For an $m \times n$ biquadratic form
\[
P(\mathbf{x},\mathbf{y}) = \sum_{i,k=1}^m \sum_{j,l=1}^n a_{ijkl} x_i x_k y_j y_l,
\]
with symmetry conditions $a_{ijkl} = a_{kjil} = a_{klij}$, the SOS rank $\operatorname{SOS}(P)$ is the minimum number of squares needed in an SOS decomposition. The biquadratic SOS rank $\operatorname{BSR}(m,n)$ is the maximum SOS rank among all $m \times n$ SOS biquadratic forms \cite{qi1, qi3}.

A remarkable connection between SOS rank and extremal graph theory was established in \cite{qi1}: for every $C_4$-free bipartite graph $G_1 = (S,T,E_1)$, the associated doubly simple biquadratic form
\[
P_G(\mathbf{x},\mathbf{y}) = \sum_{(i,j)\in E_1} x_i^2 y_j^2 + \sum_{(i,j;k,l)\in E_2} (x_i y_j + x_k y_l)^2
\]
has SOS rank at most $|E_1| + |E_2|$. When the graph satisfies certain admissibility conditions, this form is irreducible, meaning $\operatorname{SOS}(P_G) = |E_1| + |E_2|$. This led to the definition of the limited augmented Zarankiewicz number $z_L(m,n)$ \cite{qi1} and, more recently, the weak limited augmented Zarankiewicz number $z_{wL}(m,n)$ \cite{qi3}.

The weak framework relaxes the original admissibility conditions by replacing the original Condition 2 with two weaker requirements: acyclicity of the dependency graph of nondegenerate 2-edges (with complementary 2-cycles allowed), and a local prohibition that the two opposite cells of a nondegenerate 2-edge cannot both be 1-edges. Condition 3 is replaced by the requirement that every vertex-disjoint pair with at least one 2-edge has an unoccupied opposite cell \cite{qi3}.

However, these weak conditions, while sufficient for irreducibility, are not necessary. There exist irreducible doubly simple forms that violate the weak conditions, particularly $(W3)$. These forms achieve irreducibility through a more subtle mechanism: recursive propagation of orthogonality through coefficient identities, rather than through literal unoccupied cells.

In this paper, we introduce the \emph{second order Zarankiewicz number} $z_2(m,n)$ to capture exactly this algebraic phenomenon. We define it as the maximum SOS rank of an $m \times n$ irreducible doubly simple biquadratic form with $|E_1|=z(m,n)$. We also introduce the intermediate recursive-line parameter $z_{RL}(m,n)$ obtained by imposing $(S)$, $C_4$-freeness of $G_1$, and the strengthened recursive rectangle test $(RW3^+)$. The unconditional hierarchy is
\[
\operatorname{BSR}(m,n) \ge z_2(m,n) \ge z_{SL}(m,n) \ge z_{RL}(m,n) \ge z_{wL}(m,n) \ge z(m,n),
\]
where the signed parameter $z_{SL}$ is defined later by the criterion $(RW3^\pm)$. The results of this paper show that both $z_{RL}$ and $z_2$ can be strictly larger than $z_{wL}$. In two columns, however, everything collapses to the exact identity
\[
\operatorname{BSR}(m,2)=z_2(m,2)=z_{RL}(m,2)=z(m,2)=m+1
\]
for every $m$.

Our main contributions are these.
\begin{itemize}
    \item We formalize recursive orthogonality propagation by the intrinsic closure $(RW3)$ and its strengthened version $(RW3^+)$, and prove that $(RW3^+)$ is sound and strictly weaker than the literal weak cross-cell test $(W3)$ on the weak-admissible class.
    \item We prove the first exact four-column equalities beyond the weak framework:
    \[
    z_2(5,4)=z_{SL}(5,4)=z_{RL}(5,4)=13>12=z_{wL}(5,4),
    \]
    \[
    z_2(6,4)=z_{SL}(6,4)=z_{RL}(6,4)=16>14=z_{wL}(6,4).
    \]
    Concurrent work of Chen and Chen~\cite{cc26mx4}, using the recursive-line definition introduced here, incorporates the finite recursive-line values obtained in this manuscript and establishes further exact values and an eventual four-column formula.
    Hence the weak admissibility conditions are sufficient but not necessary for irreducibility, while the recursive-line parameter remains exact in both cases.
    \item We show that the recursive-line parameter can also improve on the weak framework in the three-column regime and at the standard $7\times 7$ benchmark. In particular,
    \[
    z_2(m,3)=z_{RL}(m,3)=2m \qquad \text{for all } m\ge 3,
    \]
    with strict inequality over the exact weak values for every $m\ge 10$; for $m\ge 16$ the exact gap is $\lfloor (m-3)/3\rfloor$ by \cite{qlc26}, and
    \[
    z_{RL}(7,7)\ge 32>28=z_{wL}(7,7).
    \]
    \item Exhaustive searches with independent exact verifiers establish
    $z_{RL}(5,5)=17$ and $z_{RL}(7,4)=19$. Explicit witnesses and complete
    search records are available in the accompanying repository.
    \item Along the odd-prime subsequence of the complete-graph incidence family we prove the cubic asymptotic separation
    \[
    z_2\left(\binom{N}{2}, N\right) - z_{wL}\left(\binom{N}{2}, N\right)
    \ge \left(\frac{1}{16} - o(1)\right)N^3,
    \]
    so the gap between the second-order and weak frameworks persists in infinite families.
    \item We introduce the signed recursive criterion $(RW3^\pm)$ and the signed parameter $z_{SL}$. In particular, for the exceptional incidence case $p=3$ we prove
    \[
    z_{SL}(15,6)=z_2(15,6)=60.
    \]
\end{itemize}

\section{Preliminaries}

\subsection{Biquadratic Forms and SOS Rank}

Let $P(\mathbf{x},\mathbf{y})$ be an $m \times n$ biquadratic form. We say $P$ is \emph{positive semidefinite} (PSD) if $P(\mathbf{x},\mathbf{y}) \ge 0$ for all $\mathbf{x} \in \mathbb{R}^m, \mathbf{y} \in \mathbb{R}^n$. It is a \emph{sum of squares} (SOS) if there exist bilinear forms $f_1,\dots,f_r$ such that
\[
P(\mathbf{x},\mathbf{y}) = \sum_{t=1}^r f_t(\mathbf{x},\mathbf{y})^2.
\]
The minimum such $r$ is the SOS rank $\operatorname{SOS}(P)$. The biquadratic SOS rank $\operatorname{BSR}(m,n)$ is the maximum SOS rank among all $m \times n$ SOS biquadratic forms \cite{qi1, qi3}.

\subsection{Augmented Bipartite Graphs}

Let $G_1 = (S,T,E_1)$ be an $m \times n$ bipartite graph with $S = [m]$, $T = [n]$. An \emph{augmented bipartite graph} $G = (S,T,E_1 \cup E_2)$ adds a set $E_2$ of 2-edges. A 2-edge $(i,j;k,l)$ has halves $(i,j)$ and $(k,l)$. It is:
\begin{itemize}
    \item \emph{nondegenerate} if $i \ne k$ and $j \ne l$,
    \item \emph{row-degenerate} if $i = k$ and $j \ne l$,
    \item \emph{column-degenerate} if $i \ne k$ and $j = l$.
\end{itemize}
Two nondegenerate 2-edges are \emph{complementary} if they are the two diagonals of one genuine rectangle. An augmented bipartite graph is \emph{limited} if $|E_1|=z(m,n)$.
The \emph{simplicity condition} $(S)$ requires that no cell is used by more than one edge.

The associated \emph{doubly simple biquadratic form} is
\[
P_G(\mathbf{x},\mathbf{y}) = \sum_{(i,j)\in E_1} x_i^2 y_j^2 + \sum_{(i,j;k,l)\in E_2} (x_i y_j + x_k y_l)^2. \tag{1}
\]
We say $P_G$ is \emph{irreducible} if $\operatorname{SOS}(P_G) = |E_1| + |E_2|$.

\subsection{Weak Admissibility}

We recall the weak admissibility conditions from \cite{qi3}, with the simplicity condition $(S)$ made explicit.

{
For a nondegenerate 2-edge $e=(i,j;k,l)$, its two \emph{opposite cells} are $(i,l)$ and $(k,j)$. For any selected edge $e$, let $R(e)$ and $C(e)$ denote the sets of row and column indices appearing in its support. For two vertex-disjoint selected edges $e,f$, define the \emph{opposite-cell set} by
\[
O(e,f)=\bigl(R(e)\times C(f)\bigr)\cup \bigl(R(f)\times C(e)\bigr),
\]
with repetitions suppressed. The weak cross-cell test $(W3)$ asks that, whenever two selected edges are vertex-disjoint and at least one is a 2-edge, the set $O(e,f)$ contain an unoccupied cell.

The \emph{dependency graph} of the nondegenerate selected 2-edges has a directed edge $e\to e'$ if both opposite cells of $e$ are occupied and one of them is a half of $e'$. In checking $(W2)$, each mutually complementary pair is first contracted to one vertex, so the internal complementary 2-cycle disappears rather than becoming a self-loop.
}

\begin{definition}[Weak generalized $C_4$-cycle]
An augmented bipartite graph $G = (S,T,E_1 \cup E_2)$ contains a weak generalized $C_4$-cycle if any of the following holds:
\begin{itemize}
    \item \textbf{(S)} The graph is not simple, i.e., some cell is used by more than one edge;
    \item \textbf{(W1)} The 1-edge graph $G_1$ contains a classical $C_4$;
    \item \textbf{(W2)} After every mutually complementary pair of nondegenerate 2-edges is contracted to one vertex, the resulting dependency graph contains a directed cycle;
    \item \textbf{(W2')} There exists a nondegenerate 2-edge $(i,j;k,l)$ such that both opposite cells $(i,l)$ and $(k,j)$ are 1-edges;
    \item \textbf{(W3)} There exists a vertex-disjoint pair of edges where at least one edge is a 2-edge, for which every cell in the corresponding opposite-cell set $O(e,f)$ is occupied.
\end{itemize}
If none of these occurs, $G$ is called \emph{weak admissible}.
If $|E_1|=z(m,n)$, we call $G$ a \emph{limited augmented bipartite graph}.
\end{definition}

The weak limited augmented Zarankiewicz number $z_{wL}(m,n)$ is the maximum total number of edges $|E_1| + |E_2|$ over all weak admissible limited augmented bipartite graphs with $|E_1| = z(m,n)$, where $z(m, n)$ is the classical Zarankiewicz number \cite{chm24, gu69, kst54, ni10, re58, za51}.

\section{The Second Order Zarankiewicz Number}

\subsection{Definition}

\begin{definition}[Second Order Zarankiewicz number]
The \emph{second order Zarankiewicz number} $z_2(m,n)$ is the maximum SOS rank of an $m \times n$ irreducible doubly simple biquadratic form (1) with $|E_1| = z(m,n)$.

Equivalently,
\[
\begin{aligned}
z_2(m,n) = \max \bigl\{ |E_1| + |E_2| :\ &|E_1|=z(m,n),\ G \text{ satisfies } (S),\\
&G_1 \text{ is } C_4\text{-free},\ P_G \text{ is irreducible} \bigr\},
\end{aligned}
\]
where $G = (S,T,E_1 \cup E_2)$ is a limited augmented bipartite graph.
\end{definition}

\begin{proposition}[Universal cell bound]\label{prop:universal-cell-bound}
{
For every irreducible limited doubly simple form $P_G$ on an $m\times n$ grid,
\[
\operatorname{SOS}(P_G)=|E_1|+|E_2|\le \left\lfloor \frac{mn+z(m,n)}{2}\right\rfloor.
\]
}
\end{proposition}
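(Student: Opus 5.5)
The bound is a cell count. Condition $(S)$ forces every edge of $G$ to occupy its own cells, and the grid has only $mn$ cells. Each 1-edge uses exactly one cell. Each 2-edge $(i,j;k,l)$ uses its two halves $(i,j)$ and $(k,l)$. These are two distinct cells whether the 2-edge is nondegenerate, row-degenerate or column-degenerate, since in every case $(i,j)\neq(k,l)$. A 2-edge with $(i,j)=(k,l)$ would give the term $(2x_iy_j)^2=4x_i^2y_j^2$, which we exclude from the class of doubly simple forms. Under $(S)$ no cell is shared, so the occupied cells number exactly $|E_1|+2|E_2|$, and
\[
|E_1|+2|E_2|\le mn .
\]

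Next I would use the limited hypothesis $|E_1|=z(m,n)$. This gives $2|E_2|\le mn-z(m,n)$, and adding $2|E_1|=2z(m,n)$ to both sides yields
\[
2\bigl(|E_1|+|E_2|\bigr)\le mn+z(m,n).
\]
Since $|E_1|+|E_2|$ is an integer, we may take the floor. Irreducibility gives $\operatorname{SOS}(P_G)=|E_1|+|E_2|$ by definition, which proves the stated inequality.

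The only step that needs care is justifying that each 2-edge really occupies two distinct cells, so that the count $|E_1|+2|E_2|$ is exact. I would settle this by the convention above, namely that a 2-edge has distinct halves. This is implicit in the classification of 2-edges into nondegenerate, row-degenerate and column-degenerate, because the case $i=k$, $j=l$ is not listed. Note also that the bound holds for every limited doubly simple form satisfying $(S)$. Irreducibility enters only to identify $|E_1|+|E_2|$ with the SOS rank.
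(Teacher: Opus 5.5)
Your proof is correct, but it controls $|E_1|$ differently from the paper.

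\textbf{Your route.} You take $|E_1|=z(m,n)$ directly from the word ``limited''. Irreducibility is then used only to identify $|E_1|+|E_2|$ with $\operatorname{SOS}(P_G)$.

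\textbf{The paper's route.} The paper never uses limitedness. It shows that irreducibility forces $G_1$ to be $C_4$-free. If four $1$-edges fill a rectangle with diagonal monomials $(a,d)$ and $(b,c)$, then $ad=bc$. Hence $a^2+b^2+c^2+d^2=(a+d)^2+(b-c)^2$, which is a shorter representation. This gives $|E_1|\le z(m,n)$. Combined with the same simplicity count $|E_1|+2|E_2|\le mn$, it yields $2(|E_1|+|E_2|)=|E_1|+(|E_1|+2|E_2|)\le z(m,n)+mn$.

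\textbf{Comparison.} The two arguments drop different hypotheses. Yours bounds $|E_1|+|E_2|$ for every limited configuration satisfying $(S)$, irreducible or not. The paper's bounds the SOS rank of every irreducible form satisfying $(S)$, whether or not its ordinary part is extremal. For the statement as posed, either suffices. The paper's version is the more convenient one when applying the bound to subforms or restrictions whose ordinary part need not be extremal.

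Your explicit remark that each $2$-edge has two distinct halves is a harmless clarification. The paper leaves it implicit in its three-way classification of $2$-edges.
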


\begin{proof}
If four 1-edges occupy one rectangle, write the corresponding monomials so that $(a,d)$ and $(b,c)$ are the two diagonals. Since $ad=bc$,
\[
a^2+b^2+c^2+d^2=(a+d)^2+(b-c)^2,
\]
so the displayed decomposition is reducible. Hence irreducibility forces $E_1$ to be $C_4$-free, and therefore $|E_1|\le z(m,n)$. Simplicity gives $|E_1|+2|E_2|\le mn$. Combining these inequalities,
\[
2(|E_1|+|E_2|)=|E_1|+(|E_1|+2|E_2|)\le z(m,n)+mn.
\]
Taking floors gives the stated bound.
\end{proof}

\begin{corollary}\label{cor:z2-upper-bound}
For all $m,n\ge 2$,
\[
z_2(m,n)\le \left\lfloor \frac{mn+z(m,n)}{2}\right\rfloor.
\]
If $m\ge \binom{n}{2}$, then $z(m,n)=m+\binom{n}{2}$ by \cite{culik56}, so
\[
z_2(m,n)\le \left\lfloor \frac{(n+1)m+\binom{n}{2}}{2}\right\rfloor.
\]
In particular, for $m\ge 3$,
\[
z_2(m,3)\le 2m+1.
\]
\end{corollary}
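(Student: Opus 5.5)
The corollary follows from Proposition~\ref{prop:universal-cell-bound} by taking a maximum and then evaluating $z(m,n)$ in the range $m\ge\binom{n}{2}$; no new combinatorial ingredient is needed.

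\textbf{First inequality.} By the definition of $z_2(m,n)$, it is the maximum of $|E_1|+|E_2|=\operatorname{SOS}(P_G)$ over irreducible limited doubly simple forms $P_G$ on the $m\times n$ grid. Proposition~\ref{prop:universal-cell-bound} bounds every such form by $\lfloor (mn+z(m,n))/2\rfloor$. Hence the maximum satisfies the same bound.

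If the admissible set were empty, the maximum would be vacuous. This case does not arise: taking $E_2=\emptyset$ and $G_1$ an extremal $C_4$-free graph gives a form $\sum_{(i,j)\in E_1} x_i^2y_j^2$ with distinct monomials. Its SOS rank is $|E_1|$, since in any SOS decomposition each square's bilinear form must have its support inside $E_1$, and the squares of distinct monomials are linearly independent. We only need the upper bound, so this remark can be brief.

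\textbf{Second inequality.} For $m\ge\binom{n}{2}$ we use the classical result of \v{C}ul\'{\i}k \cite{culik56}, which gives $z(m,n)=m+\binom{n}{2}$. The heuristic is that each pair of columns can be covered by at most one row, and rows of degree one cost nothing. Substituting,
\[
mn+z(m,n)=mn+m+\binom{n}{2}=(n+1)m+\binom{n}{2},
\]
which gives the second display.

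\textbf{The case $n=3$.} Here $\binom{3}{2}=3$, so the hypothesis $m\ge\binom{n}{2}$ reads $m\ge 3$. The second bound becomes $\lfloor(4m+3)/2\rfloor=2m+1$.

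The only step needing care is checking that the hypothesis of \v{C}ul\'{\i}k's theorem is exactly $m\ge\binom{n}{2}$, with rows as the larger side. This matches the paper's convention $m\ge\binom{n}{2}$, so nothing beyond invoking the citation is required. The remaining steps are arithmetic.
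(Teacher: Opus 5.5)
Your argument is correct and is the same as the paper's. The first bound is Proposition~\ref{prop:universal-cell-bound} applied to every form counted by $z_2(m,n)$, and the remaining two displays follow by substituting \v{C}ul\'{\i}k's formula and computing $\lfloor (4m+3)/2\rfloor=2m+1$.

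Your optional non-emptiness aside gives the wrong reason, although nothing in the upper bound depends on it. Linear independence of the squared monomials does not force SOS rank $|E_1|$: four $1$-edges on a rectangle satisfy $a^2+b^2+c^2+d^2=(a+d)^2+(b-c)^2$. What makes the $E_2=\emptyset$ form irreducible is $C_4$-freeness, either via the rectangle identity or via the paper's hierarchy $z_2\ge z_{wL}\ge z$.
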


\begin{proof}
This is immediate from Proposition~\ref{prop:universal-cell-bound}, since every form counted by $z_2(m,n)$ is an irreducible limited doubly simple form with $|E_1|=z(m,n)$.
\end{proof}

\subsection{The Hierarchy}

We first record the basic inequalities between the classical, weak, and second-order parameters.

\begin{theorem}
For all $m,n \ge 2$,
\[
\operatorname{BSR}(m,n) \ge z_2(m,n) \ge z_{wL}(m,n) \ge z(m,n).
\]
\end{theorem}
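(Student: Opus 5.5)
The chain has three links, and I would prove each separately.

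\emph{First link, $\operatorname{BSR}(m,n)\ge z_2(m,n)$.} This is immediate from the definitions. Each form counted by $z_2(m,n)$ is a doubly simple form $P_G$. By construction $P_G$ is an $m\times n$ SOS biquadratic form, since (1) exhibits it as a sum of squares of bilinear forms. Irreducibility gives $\operatorname{SOS}(P_G)=|E_1|+|E_2|$. Hence this quantity is at most the maximum SOS rank $\operatorname{BSR}(m,n)$. Taking the maximum over admissible $G$ gives the inequality. The set is nonempty: the construction in the next link supplies at least one feasible $G$.

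\emph{Second link, $z_2(m,n)\ge z_{wL}(m,n)$.} Take a weak admissible limited augmented bipartite graph $G$ with $|E_1|=z(m,n)$ attaining $z_{wL}(m,n)$. By definition, weak admissibility includes $(S)$ and the absence of $(W1)$. So $G$ is simple and $G_1$ is $C_4$-free, which are exactly the structural requirements in the definition of $z_2$. The remaining requirement is irreducibility of $P_G$. Here I would invoke the main theorem of \cite{qi3}, which states that weak admissibility is sufficient for $\operatorname{SOS}(P_G)=|E_1|+|E_2|$; the introduction recalls this. Then $G$ is feasible for $z_2$, so $z_2(m,n)\ge |E_1|+|E_2|=z_{wL}(m,n)$.

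The main obstacle is making sure the conditions used in this paper match the hypotheses of the irreducibility theorem in \cite{qi3}. Two points need checking: the explicit $(S)$, and the contraction convention for complementary pairs in $(W2)$. If the correspondence is only up to these conventions, I would argue that the present definition is at least as restrictive as the one in \cite{qi3}, so the cited theorem still applies.

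\emph{Third link, $z_{wL}(m,n)\ge z(m,n)$.} Take a $C_4$-free bipartite graph $G_1$ with $z(m,n)$ edges and set $E_2=\emptyset$. Check each condition in turn:
\begin{itemize}
\item $(S)$ holds trivially.
\item $(W1)$ fails because $G_1$ is $C_4$-free.
\item $(W2)$ and $(W2')$ are vacuous, since there are no 2-edges.
\item $(W3)$ is vacuous, since it concerns only pairs in which at least one edge is a 2-edge.
\end{itemize}
So this $G$ is weak admissible and limited, with total edge count $z(m,n)$. This gives $z_{wL}(m,n)\ge z(m,n)$ and completes the chain.
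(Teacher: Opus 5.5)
Your proposal is correct and follows the paper's proof link by link: the subclass argument for $\operatorname{BSR}\ge z_2$, the main theorem of \cite{qi3} for $z_2\ge z_{wL}$, and the extremal $C_4$-free graph with $E_2=\emptyset$ for $z_{wL}\ge z$. Your worry about matching conventions with \cite{qi3} can also be settled inside the paper: it later proves that every weak admissible limited graph satisfies $(RW3^+)$ and that $(RW3^+)$ is sound, which gives $z_2\ge z_{RL}\ge z_{wL}$ without depending on the exact hypotheses of \cite{qi3}.
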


\begin{proof}
\begin{enumerate}
    \item $\operatorname{BSR}(m,n) \ge z_2(m,n)$, since irreducible doubly simple forms are a subclass of SOS biquadratic forms.
    \item $z_2(m,n) \ge z_{wL}(m,n)$, by the main theorem of \cite{qi3}.
    \item $z_{wL}(m,n) \ge z(m,n)$, since every extremal $C_4$-free graph with $E_2=\emptyset$ is weak admissible.
\end{enumerate}
\end{proof}

\begin{theorem}[Exact three-column upper bound]\label{thm:m3-upper}
For every $m\ge 3$,
\[
z_2(m,3)\le 2m.
\]
\end{theorem}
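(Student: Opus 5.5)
The plan is to prove the bound by excluding equality in Corollary~\ref{cor:z2-upper-bound}. That corollary gives $z_2(m,3)\le 2m+1$, so it suffices to show that no irreducible limited doubly simple form on an $m\times 3$ grid has $|E_1|+|E_2|=2m+1$. We use $z(m,3)=m+3$ (valid for $m\ge 3=\binom32$). Equality would then force $|E_2|=m-2$. By simplicity the edges would occupy $|E_1|+2|E_2|=3m-1$ cells, so exactly one cell of the grid is unoccupied. The proof consists of pinning down this nearly-full configuration and exhibiting a reduction.

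\emph{Step 1: structure of $E_1$.} A $C_4$-free graph with three columns has every pair of columns sharing at most one row, so $\sum_i\binom{d_i}{2}\le 3$. Since $\sum_i d_i=m+3$, we have $\sum_i(d_i-1)=3$. A row of degree $3$ already uses all three column pairs, which forces every other row to have degree $\le 1$. Hence some row has degree $0$, and then $\sum_i(d_i-1)\le 2$, a contradiction. Rows of degree $0$ are excluded in the same way, because each one would require compensating excess degree elsewhere. Therefore $E_1$ consists of exactly three rows of degree $2$, whose column pairs are $\{1,2\},\{1,3\},\{2,3\}$, and $m-3$ rows of degree $1$. Up to permuting rows and columns, $E_1$ is unique. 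Its complement consists of one free cell in each of the three special rows and two free cells in each ordinary row. The $m-2$ two-edges must cover all but one of these $2m-3$ free cells.

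\emph{Step 2: locating a reducible local pattern.} For the case analysis we split by the position of the single unoccupied cell (in a special row or in an ordinary row) and by the types of 2-edges (row-degenerate, column-degenerate, nondegenerate). Each ordinary row has two free cells and carries only one 1-edge. A pigeonhole count over the $m-3$ ordinary rows then shows that many of these rows are covered entirely by halves of 2-edges. From this we aim to produce two vertex-disjoint edges $e,f$, at least one of them a 2-edge, for which every cell of $O(e,f)$ is occupied, and for which the monomial identity $x_iy_j\cdot x_ky_l=x_iy_l\cdot x_ky_j$ couples the cross terms of the squares involved. The fallback is the dual argument. An irreducible form of rank $r$ means that every Gram matrix of $P_G$ in the monomial basis $\{x_iy_j\}$ has rank $\ge r$. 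The Gram matrices form an affine family parametrized by one free variable $t_R$ per rectangle $R$, shifting weight between the two diagonal products. We would exhibit an explicit choice of the $t_R$ that drops the rank by one, since in the nearly-full grid every rectangle has all four cells occupied.

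\emph{Main obstacle.} The hard step is constructing the rank-dropping Gram perturbation (equivalently, an explicit decomposition with $2m$ squares). A single rectangle does not suffice. For four cells $a,b,c,d$ with $ab=cd$ and the pattern $a^2+b^2+c^2+d^2+2ab$, the admissible Gram family has minimal rank $3$, which is not a reduction. The reduction must therefore chain several rectangles. The natural chain runs through the three special rows, which form a ``triangle'' on the three column pairs, together with the ordinary rows whose free cells are paired by 2-edges. I would first try the case where the unoccupied cell lies in an ordinary row. There, the orthogonality propagation that makes the form irreducible has no starting point: the recursive rectangle test finds no unoccupied opposite cell, and one can follow that failing chain of rectangles to read off the perturbation $\{t_R\}$ explicitly. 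The special-row case should reduce to this case or be handled by a symmetric argument.
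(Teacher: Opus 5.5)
\textbf{There is a genuine gap.}

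Your reduction to the equality case is correct. Corollary~\ref{cor:z2-upper-bound} gives $z_2(m,3)\le 2m+1$, and $|E_1|+|E_2|=2m+1$ forces $|E_2|=m-2$ and exactly one unoccupied cell. But the argument stops there. Step~2 never exhibits a representation with at most $2m$ squares, and you yourself leave the rank-dropping Gram perturbation as the unresolved main obstacle.

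The mechanisms you suggest would not supply it either:
\begin{itemize}
\item A vertex-disjoint pair violating $(W3)$ does not imply reducibility. The $5\times4$ example in this paper violates $(W3)$ and is irreducible.
\item Failure of orthogonality propagation only shows that this certificate system proves nothing. It does not produce a lower-rank PSD Gram matrix. Also, the line rule already grounds same-row and same-column zeros, so the propagation does have starting points.
\end{itemize}
There are two smaller slips. First, $E_1$ is not unique up to permutations once $m\ge5$: the $m-3$ degree-one rows can be distributed among the columns in non-isomorphic ways (column degrees $(4,2,2)$ versus $(3,3,2)$ at $m=5$). Second, rectangles through the hole do not have all four cells occupied.

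The missing idea is a one-column deletion combined with the two-column value $\operatorname{BSR}(m,2)=m+1$ from \cite{qi1}. Let the hole lie in column $c$. Column $c$ then has at most $m-1$ occupied cells. By simplicity, distinct displayed squares meeting column $c$ use distinct such cells, so at most $m-1$ displayed squares involve $y_c$. The other displayed squares form an $m\times 2$ SOS biquadratic form, which can be rewritten with at most $m+1$ squares. Hence $\operatorname{SOS}(P_G)\le (m-1)+(m+1)=2m<|E_1|+|E_2|$, contradicting irreducibility.

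This is the paper's proof, and it does not even need the equality case. There are $3m-(m+3)=2m-3$ free cells, an odd number, so every candidate has some column with $h\ge1$ holes. The same count then gives $\operatorname{SOS}(P_G)\le 2m+1-h\le 2m$ directly. No classification of $E_1$ and no explicit Gram perturbation is required.
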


\begin{proof}
Let $P_G$ be any form counted by $z_2(m,3)$. Then $P_G$ is irreducible, limited, and doubly simple, with
\[
|E_1|=z(m,3)=m+3
\]
by \v{C}ul\'{\i}k's formula. Thus the number of cells outside $E_1$ is
\[
3m-(m+3)=2m-3,
\]
which is odd. Every selected $2$-edge occupies exactly two such cells, and simplicity prevents overlaps, so at least one cell remains unoccupied. If some column contains $h\ge 1$ holes, then at most $m-h$ displayed squares can touch that column, while the remaining displayed squares form an $m\times 2$ SOS biquadratic form after deleting it. Hence
\[
\operatorname{SOS}(P_G)\le m-h+\operatorname{BSR}(m,2)=2m+1-h\le 2m,
\]
using the exact two-column formula $\operatorname{BSR}(m,2)=m+1$ from \cite{qi1}.
Since $P_G$ is irreducible,
\[
|E_1|+|E_2|=\operatorname{SOS}(P_G)\le 2m.
\]
Taking the maximum over all irreducible limited doubly simple $m\times 3$ forms proves the claim.
\end{proof}

\subsection{The weak value at (5,4)}

\begin{proposition}\label{prop:zwl54}
\[
z_{wL}(5,4)=12.
\]
\end{proposition}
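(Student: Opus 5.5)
The proof has two halves: a lower bound $z_{wL}(5,4)\ge 12$ from an explicit weak admissible graph, and a matching upper bound $z_{wL}(5,4)\le 12$ from an exhaustive case analysis (possibly computer-assisted). By \v{C}ul\'{\i}k's formula, $\binom{4}{2}=6>5$, so we use the known value $z(5,4)=10$. Then $E_1$ leaves $20-10=10$ free cells, and simplicity gives $|E_2|\le 5$. Proposition~\ref{prop:universal-cell-bound} gives the a priori bound $15$. So we must show that a weak admissible limited graph carries exactly two $2$-edges at best.

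\textbf{Lower bound.} I would fix a concrete $C_4$-free $5\times 4$ graph with $10$ edges. A natural choice takes degree sequence $(3,2,2,2,1)$ on the rows, e.g. rows $\{1,2,3\},\{1,4\},\{2,4\},\{3,4\},\{1\}$, adjusted as needed to stay $C_4$-free. I would then place two $2$-edges in the complement and check $(S)$, $(W1)$, $(W2)$, $(W2')$ and $(W3)$ directly. The $(W3)$ check is finite: each $2$-edge is paired with every vertex-disjoint selected edge, and we verify that an opposite cell is free. A row-degenerate or column-degenerate $2$-edge keeps the supports small, which makes $(W3)$ easy to satisfy, so I would try those first.

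\textbf{Upper bound.} Suppose a weak admissible limited $G$ has $|E_2|\ge 3$, so at least six of the ten free cells are used. I would enumerate the extremal $C_4$-free $5\times4$ graphs with $10$ edges up to row and column permutation; there are only a handful of such classes. For each class, the ten-cell complement has a simple structure. Conditions $(W2')$ and $(W3)$ act as strong local obstructions:
\begin{itemize}
\item Any $2$-edge spans at least two rows and a few columns.
\item Since $G_1$ is extremal, most cells are occupied.
\item Hence for a vertex-disjoint pair $(e,f)$, the opposite-cell set $O(e,f)$ tends to be fully occupied, unless the unoccupied cells lie in exactly the right positions.
\end{itemize}
Adding a third $2$-edge uses up six of the ten free cells, which leaves at most four holes to witness every required $(W3)$ test and every $(W2')$ opposite cell. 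I would argue by counting these required witnesses: each vertex-disjoint pair needs its own hole, and with only $4$ holes and $5$ rows there are too many disjoint pairs. Any residual configurations would be killed by $(W2)$ cycles.

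\textbf{Main obstacle.} The hardest step is the exhaustive upper-bound case analysis. A clean pen-and-paper counting argument may not cover every configuration. I would therefore back it with an exact computer enumeration over all $C_4$-free $E_1$ of size $10$ and all simple sets of three $2$-edges in the complement, checking the five conditions, and cite that verification, as the paper does for its other finite values.
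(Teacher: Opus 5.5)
Your plan has the same two-part structure as the paper's proof: an explicit weak admissible witness with two $2$-edges, then an exhaustive computer check over the three extremal $C_4$-free bases of all $\binom{10}{6}\cdot 15=3150$ placements of three disjoint pairs per base. As written, however, it proves neither inequality.

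\textbf{Lower bound.} The lower bound needs an actual witness, and you never produce one. Your heuristic of trying degenerate $2$-edges first also fails on the very base you propose (rows $\{1,2,3\},\{1,4\},\{2,4\},\{3,4\},\{1\}$, which is already $C_4$-free). The only degenerate $2$-edges that individually pass $(W3)$ against the $1$-edges are $(5,2;5,4)$, $(5,3;5,4)$, $(4,2;5,2)$, and $(3,3;5,3)$. Any two of them either share a cell or leave some opposite-cell set fully occupied.

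A mixed choice does work. Take $E_2=\{(2,2;3,3),(1,4;5,3)\}$; the holes are then $(2,3),(3,1),(4,1),(4,2),(5,2),(5,4)$. Each $2$-edge has a hole among its opposite cells, so $(W2')$ holds and the dependency graph has no arcs. The two $2$-edges share column $3$, so they need no $(W3)$ check against each other. The remaining vertex-disjoint pairs are $(2,2;3,3)$ with $(1,1),(4,4),(5,1)$, witnessed by the holes $(3,1),(4,2),(3,1)$, and $(1,4;5,3)$ with $(2,1),(3,2)$, witnessed by $(2,3),(5,2)$. The paper instead uses an all-degree-two base with the degenerate pair $(3,1;3,4)$, $(4,2;5,2)$.

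\textbf{Upper bound.} Your counting sketch fails at its key step. A vertex-disjoint pair does not need its own hole, because one hole can lie in the opposite-cell sets of many pairs; above, $(3,1)$ serves two pairs. So four holes are not exhausted in the way you suggest. The upper bound therefore rests entirely on the enumeration, exactly as in the paper.

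You also need one more fact to let a check of exactly three $2$-edges cover all cases with $|E_2|\ge 3$: weak admissibility is preserved when $2$-edges are deleted. The paper uses this fact, and it holds for the following reasons. Deletion only turns occupied cells into holes, so $(S)$, $(W1)$, $(W2')$ and $(W3)$ survive. Dependency arcs can only disappear, never appear. If one member of a complementary pair is deleted, the survivor now has both opposite cells empty, so it has no outgoing arcs and cannot lie on a cycle.
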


\begin{proof}
We first show the lower bound $z_{wL}(5,4)\ge 12$ by an explicit weak admissible construction. Let
\[
E_1=\{(1,3),(1,4),(2,2),(2,4),(3,2),(3,3),(4,1),(4,4),(5,1),(5,3)\}
\]
and
\[
E_2=\{(3,1;3,4),(4,2;5,2)\}.
\]
Then $|E_1|=10=z(5,4)$ and $|E_2|=2$, so the total number of edges is $12$.

The $1$-edge graph is $C_4$-free: its column supports are
\[
C_1=\{4,5\},\qquad C_2=\{2,3\},\qquad C_3=\{1,3,5\},\qquad C_4=\{1,2,4\},
\]
and every pair of these sets intersects in at most one row. Thus $(W1)$ does not occur. Both $2$-edges are degenerate, so $(W2)$ and $(W2')$ are vacuous.

It remains to check $(W3)$. For the row-degenerate edge $(3,1;3,4)$, the only vertex-disjoint edges are
\[
(1,3),\ (2,2),\ (5,3),\ (4,2;5,2),
\]
and their opposite-cell sets contain the empty cells $(1,1)$, $(2,1)$, $(5,4)$, and $(5,4)$, respectively. For the column-degenerate edge $(4,2;5,2)$, the additional vertex-disjoint $1$-edges are
\[
(1,3),\ (1,4),\ (2,4),\ (3,3),
\]
and their opposite-cell sets contain the empty cells $(1,2)$, $(1,2)$, $(5,4)$, and $(4,3)$, respectively. Hence every vertex-disjoint pair with at least one $2$-edge has an unoccupied opposite cell, so $(W3)$ also holds. Therefore this graph is weak admissible, and $z_{wL}(5,4)\ge 12$.

For the upper bound, suppose $G=(S,T,E_1\cup E_2)$ is weak admissible with $|E_1|=z(5,4)=10$ and $|E_1|+|E_2|\ge 13$. Then $|E_2|\ge 3$. Since weak admissibility is preserved when $2$-edges are deleted, it is enough to exclude the case $|E_2|=3$.

Up to row and column permutations, there are exactly three extremal $C_4$-free $5\times 4$ graphs with $10$ ordinary edges. For each representative $E_1$, there are
\[
\binom{10}{6}\cdot 15=3150
\]
ways to choose three pairwise disjoint $2$-edges on the ten cells outside
$E_1$: first choose the six used cells, then pair them into three unordered
pairs. An exact exhaustive computation rejects all $9450$ candidates.
The search and its complete rejection records are available in
\cite{reproducibility}. Therefore no weak admissible configuration with
$|E_2|\ge3$ exists, so $z_{wL}(5,4)\le12$.

Combining the lower and upper bounds gives $z_{wL}(5,4)=12$.
\end{proof}

\subsection{\texorpdfstring{The Gap: $z_2(5,4) > z_{wL}(5,4)$}{The Gap: z2(5,4) > zwL(5,4)}}

The following theorem establishes a strict separation in the hierarchy.

\begin{theorem}
\[
z_2(5,4) > z_{wL}(5,4).
\]
Specifically,
\[
z_2(5,4) \ge 13 > 12 = z_{wL}(5,4).
\]
\end{theorem}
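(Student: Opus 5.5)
Since $z_{wL}(5,4)=12$ is Proposition~\ref{prop:zwl54}, the theorem reduces to the lower bound $z_2(5,4)\ge 13$. To prove it I would exhibit one explicit limited augmented bipartite graph $G$ on the $5\times 4$ grid with $|E_1|=z(5,4)=10$, $|E_2|=3$, satisfying $(S)$ with $G_1$ $C_4$-free, and then show that $P_G$ is irreducible, i.e. $\operatorname{SOS}(P_G)=13$.

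For the construction I would start from one of the three extremal $C_4$-free $5\times4$ graphs, for instance the $E_1$ used in Proposition~\ref{prop:zwl54}. On its ten free cells I would place three pairwise disjoint $2$-edges. The choice has two constraints:
\begin{itemize}
\item[(a)] $(W1)$, $(W2)$ and $(W2')$ should still hold, so that the local degeneracies handled in \cite{qi3} do not occur;
\item[(b)] $(W3)$ must necessarily fail for some vertex-disjoint pair, since the exhaustive search in Proposition~\ref{prop:zwl54} rules out any weak admissible candidate.
\end{itemize}
I would aim for the failure of $(W3)$ to occur for only one or two pairs. In each such pair, the cells of $O(e,f)$ should all be occupied, but by edges whose own orthogonality is already forced, so that it can propagate recursively.

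The irreducibility proof would follow the standard Gram-matrix argument. Suppose $P_G=\sum_{t=1}^r f_t^2$ with $r\le 12$. Write $f_t=\sum_{i,j}c^{(t)}_{ij}x_iy_j$ and set $v_{ij}=(c^{(t)}_{ij})_t\in\mathbb R^r$. Matching coefficients of $x_i^2y_j^2$ gives $\|v_{ij}\|=1$ on occupied cells and $v_{ij}=0$ on empty cells. It also fixes $\langle v_{ij},v_{kl}\rangle=1$ for each $2$-edge. Combined with the unit norms, this forces $v_{ij}=v_{kl}$ for the two halves.

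The mixed monomials give two kinds of identities:
\begin{itemize}
\item $x_ix_ky_jy_l$ with $i\ne k$, $j\ne l$ gives $\langle v_{ij},v_{kl}\rangle+\langle v_{il},v_{kj}\rangle=c$, where $c$ is the known coefficient;
\item same-row and same-column monomials give $\langle v_{ij},v_{il}\rangle=0$ unless the pair is a $2$-edge, and similarly along columns.
\end{itemize}
The goal is to derive that the $13$ vectors, one per edge, are pairwise orthogonal unit vectors. That makes $r\ge13$, a contradiction.

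Orthogonality between edges sharing a row or column is immediate. For vertex-disjoint pairs $(e,f)$, I would go through the rectangle identities. Whenever $O(e,f)$ contains an empty cell, the complementary inner product vanishes and orthogonality follows directly. This is exactly the argument from \cite{qi3}.

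The main obstacle is the pairs violating $(W3)$. For these I would order the deductions: first establish orthogonality for all the "easy" pairs, then substitute those results into the rectangle identity of the bad pair. There the complementary term becomes an inner product between vectors already known to be orthogonal, or equal to vectors already known to be orthogonal via the $2$-edge equalities $v_{ij}=v_{kl}$. That makes it vanish.

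If a circular dependency appears, I would adjust the placement of the $2$-edges until the dependency order is acyclic. As a safeguard, I would confirm the final witness with an exact rank computation in \cite{reproducibility}: the Gram matrix is forced to be the identity on the $13$ edge vectors. Together with Proposition~\ref{prop:zwl54}, this gives $z_2(5,4)\ge13>12=z_{wL}(5,4)$.
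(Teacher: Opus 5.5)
Your overall framework (unit coefficient vectors, line and rectangle identities, and deductions ordered so that each companion term is already known) is the one the paper uses. Deferring the weak side to Proposition~\ref{prop:zwl54} is correct. The gap is that $z_2(5,4)\ge 13$ is an existence statement, and your proposal never produces the object. You describe how you would place three disjoint $2$-edges and say you would ``adjust the placement'' whenever a circular dependency appears, but nothing guarantees that this loop succeeds. Success is not automatic: the exhaustive search behind Theorem~\ref{prop:zrl54} finds only $124$ $(RW3^+)$-admissible placements among the $9450$ candidates over the three bases. Appendix~B also shows that many individual pairs already force reducibility.

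The paper's proof supplies exactly the missing content. The witness is
\[
E_1=\{(1,2),(1,3),(1,4),(2,1),(2,2),(3,4),(4,1),(4,4),(5,1),(5,3)\},\qquad E_2=\{(3,1;4,2),(5,2;3,3),(4,3;2,4)\},
\]
with holes $(1,1),(2,3),(3,2),(5,4)$. It is followed by $38$ shared-line orthogonalities and a $40$-line ordered table of rectangle transfers. In that table every companion term is a hole, a shared-line zero, or an earlier line. For instance, the $(W3)$-violating pair $(1,2)$, $(4,3;2,4)$ is handled by the rectangle on rows $1,4$ and columns $2,3$. Its companion pairs $(1,3)$ with $(3,1;4,2)$, which was certified earlier using the hole $(1,1)$. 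Your proposed safeguard does not close the gap either. Exhibiting one Gram matrix of rank $13$ proves nothing; you need that \emph{every} Gram representation has rank at least $13$, and that is exactly what the propagation certificate establishes.

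There is also an error in your setup: coefficient matching does not give $\langle v_{ij},v_{kl}\rangle=1$ for every $2$-edge. It does for degenerate $2$-edges, via a same-row or same-column monomial. For a nondegenerate $2$-edge $(i,j;k,l)$, however, the monomial $x_ix_ky_jy_l$ gives only
\[
\langle v_{ij},v_{kl}\rangle+\langle v_{il},v_{kj}\rangle=1,
\]
or $2$ if the opposite diagonal is also selected. So the two halves are identified only after the opposite diagonal $\{(i,l),(k,j)\}$ has been controlled. Once $(W3)$ is allowed to fail, excluding $(W2)$ and $(W2')$ does not by itself ensure this. The opposite cells may lie in vertex-disjoint edges whose mutual orthogonality is itself still undecided. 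The paper's witness is chosen so that each opposite diagonal contains a hole: $(3,2)$ for $(3,1;4,2)$ and $(5,2;3,3)$, and $(2,3)$ for $(4,3;2,4)$. Hence all three $2$-edges resolve at once. Your construction needs the same property, or an explicit resolution order, before the orthogonality propagation can even begin.
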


\begin{proof}
We exhibit an explicit $5 \times 4$ irreducible doubly simple form with SOS rank 13.

Let
\[
E_1 = \{(1,2),(1,3),(1,4),(2,1),(2,2),(3,4),(4,1),(4,4),(5,1),(5,3)\}.
\]
This is an extremal $C_4$-free graph with $|E_1| = 10 = z(5,4)$.

Let
\[
E_2 = \{(3,1;4,2), (5,2;3,3), (4,3;2,4)\}.
\]
All three 2-edges are nondegenerate and use six distinct cells. The total number of edges is 13.

The associated doubly simple biquadratic form is
\[
\begin{aligned}
P_G(\mathbf{x},\mathbf{y}) = & x_1^2(y_2^2 + y_3^2 + y_4^2) + x_2^2(y_1^2 + y_2^2) + x_3^2 y_4^2 \\
& + x_4^2(y_1^2 + y_4^2) + x_5^2(y_1^2 + y_3^2) \\
& + (x_3 y_1 + x_4 y_2)^2 + (x_5 y_2 + x_3 y_3)^2 + (x_4 y_3 + x_2 y_4)^2.
\end{aligned}
\]
Immediately, $\operatorname{SOS}(P_G) \le 13$ from the defining representation.

To show irreducibility, we use a vector argument. Suppose $P_G$ has an SOS representation with $r$ squares. Write
\[
P_G = \sum_{t=1}^r f_t^2, \qquad f_t = \sum_{i=1}^5 \sum_{j=1}^4 v_{ij}^{(t)} x_i y_j,
\]
and set $\mathbf{v}_{ij} = (v_{ij}^{(1)}, \dots, v_{ij}^{(r)})^\top \in \mathbb{R}^r$. Coefficient comparison gives the following standard facts:

\begin{enumerate}
    \item For each occupied cell $(i,j)$, $\|\mathbf{v}_{ij}\|^2 = 1$. For each unoccupied cell, $\mathbf{v}_{ij} = 0$.

    \item In this particular construction, two occupied cells in the same row or the same column are orthogonal. Indeed, all selected $2$-edges here are nondegenerate, so no such same-line pair can itself be a selected degenerate $2$-edge.

    \item For every genuine rectangle $(i,k)\times(j,\ell)$ with $i \neq k$ and $j \neq \ell$,
    \[
    \mathbf{v}_{ij} \cdot \mathbf{v}_{k\ell} + \mathbf{v}_{i\ell} \cdot \mathbf{v}_{kj}
    \]
    equals the number of selected 2-edges on the two diagonals of that rectangle.
\end{enumerate}

The occupied cells are:
\[
\begin{array}{c|cccc}
 & 1 & 2 & 3 & 4 \\ \hline
1 & 0 & a & b & c \\
2 & d & e & 0 & w \\
3 & u & 0 & v & f \\
4 & g & u & w & h \\
5 & i & v & j & 0
\end{array}
\]
Here $a,\dots,j$ denote the ten 1-edge vectors and $u,v,w$ denote the three 2-edge vectors:
\[
u = (3,1;4,2), \qquad v = (5,2;3,3), \qquad w = (4,3;2,4).
\]
Thus the four unoccupied cells are $(1,1)$, $(2,3)$, $(3,2)$, and $(5,4)$.

First the three 2-edges resolve immediately. For rows $3,4$ and columns $1,2$, the cell $(3,2)$ is unoccupied, so the rectangle identity gives
\[
\mathbf{v}_{31} \cdot \mathbf{v}_{42} = 1.
\]
Since both vectors are unit vectors, this forces
\[
\mathbf{v}_{31} = \mathbf{v}_{42} =: \mathbf{u}.
\]
Similarly,
\[
\mathbf{v}_{52} = \mathbf{v}_{33} =: \mathbf{v}, \qquad \mathbf{v}_{43} = \mathbf{v}_{24} =: \mathbf{w}.
\]
Hence every SOS representation contains the same thirteen unit edge-vectors
\[
a,b,c,d,e,f,g,h,i,j,u,v,w.
\]
It remains to show that they are pairwise orthogonal.

\noindent
Shared rows and columns give $38$ of the $\binom{13}{2} = 78$ orthogonalities immediately:
\[
\begin{aligned}
a &\perp \{b,c,e,u,v\}, & b &\perp \{c,j,v,w\}, & c &\perp \{f,h,w\},\\
d &\perp \{e,g,i,u,w\}, & e &\perp \{u,v,w\}, & f &\perp \{h,u,v,w\},\\
g &\perp \{h,i,u,w\}, & h &\perp \{u,w\}, & i &\perp \{j,u,v\},\\
j &\perp \{v,w\}, & u &\perp \{v,w\}, & v &\perp w.
\end{aligned}
\]
The remaining $40$ orthogonalities follow from zero-right-hand-side rectangle identities. In each line below,
the target inner product plus the listed companion term is zero; the companion is either already zero by the previous list,
contains an unoccupied cell, or has been proved zero on an earlier line.

\begin{center}
\small
\begin{tabular}{lll}
Target & Rectangle & Companion \\
\hline
$\langle a,d\rangle$ & $(1,2)\times(1,2)$ & $0$ \\
$\langle b,d\rangle$ & $(1,2)\times(1,3)$ & $0$ \\
$\langle c,d\rangle$ & $(1,2)\times(1,4)$ & $0$ \\
$\langle b,e\rangle$ & $(1,2)\times(2,3)$ & $0$ \\
$\langle b,u\rangle$ & $(1,3)\times(1,3)$ & $0$ \\
$\langle c,u\rangle$ & $(1,3)\times(1,4)$ & $0$ \\
$\langle a,f\rangle$ & $(1,3)\times(2,4)$ & $0$ \\
$\langle a,g\rangle$ & $(1,4)\times(1,2)$ & $0$ \\
$\langle b,g\rangle$ & $(1,4)\times(1,3)$ & $0$ \\
$\langle c,g\rangle$ & $(1,4)\times(1,4)$ & $0$ \\
$\langle a,w\rangle$ & $(1,4)\times(2,3)$ & $\langle b,u\rangle$ \\
$\langle a,h\rangle$ & $(1,4)\times(2,4)$ & $\langle c,u\rangle$ \\
$\langle b,h\rangle$ & $(1,4)\times(3,4)$ & $\langle c,w\rangle$ \\
$\langle a,i\rangle$ & $(1,5)\times(1,2)$ & $0$ \\
$\langle b,i\rangle$ & $(1,5)\times(1,3)$ & $0$ \\
$\langle c,i\rangle$ & $(1,5)\times(1,4)$ & $0$ \\
$\langle a,j\rangle$ & $(1,5)\times(2,3)$ & $\langle b,v\rangle$ \\
$\langle c,v\rangle$ & $(1,5)\times(2,4)$ & $0$ \\
$\langle c,j\rangle$ & $(1,5)\times(3,4)$ & $0$ \\
$\langle d,v\rangle$ & $(2,3)\times(1,3)$ & $0$ \\
$\langle d,f\rangle$ & $(2,3)\times(1,4)$ & $\langle u,w\rangle$ \\
$\langle e,f\rangle$ & $(2,3)\times(2,4)$ & $0$ \\
$\langle e,g\rangle$ & $(2,4)\times(1,2)$ & $\langle d,u\rangle$ \\
$\langle d,h\rangle$ & $(2,4)\times(1,4)$ & $\langle g,w\rangle$ \\
$\langle e,h\rangle$ & $(2,4)\times(2,4)$ & $\langle u,w\rangle$ \\
$\langle e,i\rangle$ & $(2,5)\times(1,2)$ & $\langle d,v\rangle$ \\
$\langle d,j\rangle$ & $(2,5)\times(1,3)$ & $0$ \\
$\langle i,w\rangle$ & $(2,5)\times(1,4)$ & $0$ \\
$\langle e,j\rangle$ & $(2,5)\times(2,3)$ & $0$ \\
$\langle g,v\rangle$ & $(3,4)\times(1,3)$ & $\langle u,w\rangle$ \\
$\langle f,g\rangle$ & $(3,4)\times(1,4)$ & $\langle h,u\rangle$ \\
$\langle h,v\rangle$ & $(3,4)\times(3,4)$ & $\langle f,w\rangle$ \\
$\langle j,u\rangle$ & $(3,5)\times(1,3)$ & $\langle i,v\rangle$ \\
$\langle f,i\rangle$ & $(3,5)\times(1,4)$ & $0$ \\
$\langle f,j\rangle$ & $(3,5)\times(3,4)$ & $0$ \\
$\langle g,j\rangle$ & $(4,5)\times(1,3)$ & $\langle i,w\rangle$ \\
$\langle h,i\rangle$ & $(4,5)\times(1,4)$ & $0$ \\
$\langle h,j\rangle$ & $(4,5)\times(3,4)$ & $0$ \\
$\langle c,e\rangle$ & $(1,2)\times(2,4)$ & $\langle a,w\rangle$ \\
$\langle b,f\rangle$ & $(1,3)\times(3,4)$ & $\langle c,v\rangle$
\end{tabular}
\end{center}

Since we have 13 nonzero mutually orthogonal vectors in $\mathbb{R}^r$, we must have $r \ge 13$. Therefore $\operatorname{SOS}(P_G) = 13$, and $P_G$ is irreducible. Hence $z_2(5,4) \ge 13$.

By Proposition~\ref{prop:zwl54}, $z_{wL}(5,4) = 12$. Therefore,
\[
z_2(5,4) \ge 13 > 12 = z_{wL}(5,4).
\]
This completes the proof.
\end{proof}

\begin{remark}
Corollary~\ref{cor:z2-54} below upgrades this strict separation to the
exact identity $z_2(5,4)=13=z_{RL}(5,4)$.
\end{remark}

\begin{remark}
The construction violates $(W3)$. Consider the vertex-disjoint edges
\[
(1,2)\qquad\text{and}\qquad(4,3;2,4).
\]
Their cross cells are
\[
(1,3), (1,4), (4,2), (2,2)
\]
are occupied. Hence the weak admissibility test rejects the configuration, even though the coefficient identities prove that its SOS rank is 13.
\end{remark}

\section{\texorpdfstring{A Finite Intrinsic Replacement for (W3): $(RW3^+)$ and $z_{RL}$}{A Finite Intrinsic Replacement for (W3): (RW3+) and zRL}}

The $5 \times 4$ proof shows that the relevant object is not a literal empty-cross-cell test, but a finite proof system built from coefficient identities. Besides genuine rectangle identities, degenerate selected $2$-edges are also detected directly by same-row or same-column coefficient comparisons. We first package those mechanisms into the intrinsic fixed-point closure $(RW3)$, and then adjoin the complementary-pair rule to obtain the strengthened criterion $(RW3^+)$. This strengthened version is the one that defines the recursive-line parameter $z_{RL}$ studied throughout the section.

\begin{definition}[Line-and-rectangle certificate closure]
Let $G=(S,T,E_1\cup E_2)$ be a limited augmented bipartite graph, and let $\Omega$ be its occupied cells. For any unordered pair $\{p,q\}$ of occupied cells, write
\[
\delta(\{p,q\})=
\begin{cases}
1,&\text{if }\{p,q\}\in E_2,\\
0,&\text{otherwise.}
\end{cases}
\]
For a genuine rectangle with corners
\[
p=(i,j),\qquad q=(k,\ell),\qquad r=(i,\ell),\qquad s=(k,j),
\]
the same notation applies to the two diagonals $\{p,q\}$ and $\{r,s\}$.

Define an \emph{identification} relation $\sim$ on occupied cells as the least equivalence relation generated by the identification clauses in the rules below. Define a \emph{certified orthogonality} relation $\perp_R$ as the least symmetric relation on occupied cells that is closed under the orthogonality clauses below and saturated under $\sim$ in either argument. Concretely, $\sim$ and $\perp_R$ are generated by the following rules:
\begin{enumerate}
    \item \textbf{Line rule.} If $p,q\in \Omega$ lie in the same row or the same column, then the pair $\{p,q\}$ is certified to have value $\delta(\{p,q\})$:
    \begin{itemize}
        \item if $\delta(\{p,q\})=0$, then $p\perp_R q$;
        \item if $\delta(\{p,q\})=1$, then $p\sim q$.
    \end{itemize}

    \item \textbf{Saturation rule.} If $p\sim p'$, $q\sim q'$, and $p'\perp_R q'$, then $p\perp_R q$.

    \item \textbf{Rectangle transfer rule.} For any genuine rectangle with diagonals $\{p,q\}$ and $\{r,s\}$, suppose the diagonal $\{r,s\}$ is already certified to have value $\delta(\{r,s\})$ in the following sense:
    \begin{itemize}
        \item if $\delta(\{r,s\})=0$, then either one of $r,s$ is unoccupied or $r\perp_R s$;
        \item if $\delta(\{r,s\})=1$, then both $r,s$ are occupied and $r\sim s$.
    \end{itemize}
    Then the diagonal $\{p,q\}$ is certified to have value $\delta(\{p,q\})$:
    \begin{itemize}
        \item if $\delta(\{p,q\})=0$ and $p,q$ are occupied, then $p\perp_R q$;
        \item if $\delta(\{p,q\})=1$, then $p\sim q$.
    \end{itemize}
\end{enumerate}
\end{definition}

\begin{definition}[Recursive rectangle admissibility $(RW3)$]
For each selected edge $e$, let
\[
\operatorname{supp}(e)=
\begin{cases}
\{(i,j)\},&\text{if }e=(i,j)\in E_1,\\
\{(i,j),(k,\ell)\},&\text{if }e=(i,j;k,\ell)\in E_2.
\end{cases}
\]
We say that $G$ satisfies \emph{recursive rectangle admissibility} if, in the line-and-rectangle certificate closure above,
\begin{enumerate}
    \item every selected $2$-edge has its two halves identified;
    \item distinct selected edges determine distinct $\sim$-classes;
    \item for any two distinct selected edges $e,f$, there exist $p\in \operatorname{supp}(e)$ and $q\in \operatorname{supp}(f)$ such that $p\perp_R q$.
\end{enumerate}
\end{definition}

\begin{theorem}
If $G=(S,T,E_1\cup E_2)$ is recursively rectangle admissible, then the associated doubly simple biquadratic form $P_G$ is irreducible. In particular,
\[
\operatorname{SOS}(P_G)=|E_1|+|E_2|.
\]
\end{theorem}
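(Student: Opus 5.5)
The plan is to generalize the vector argument from the $5\times 4$ construction. Take an arbitrary SOS representation $P_G=\sum_{t=1}^r f_t^2$ with $f_t=\sum_{i,j} v^{(t)}_{ij}x_iy_j$, and attach to each cell $(i,j)$ the vector $\mathbf v_{ij}\in\mathbb R^r$. Coefficient comparison gives three families of identities. First, $\|\mathbf v_{ij}\|^2$ equals the coefficient of $x_i^2y_j^2$, which by $(S)$ is $1$ on occupied cells and $0$ otherwise, so unoccupied cells carry zero vectors. Second, for two cells in the same row or column, $2\mathbf v_{p}\cdot\mathbf v_q$ equals the coefficient of the corresponding monomial, so $\mathbf v_p\cdot\mathbf v_q=\delta(\{p,q\})$. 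Third, for a genuine rectangle with diagonals $\{p,q\},\{r,s\}$ we have $\mathbf v_p\cdot\mathbf v_q+\mathbf v_r\cdot\mathbf v_s=\delta(\{p,q\})+\delta(\{r,s\})$.

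The core step is a soundness lemma, proved by induction on the length of the derivation in the certificate closure. The claim is that $p\sim q$ implies $\mathbf v_p=\mathbf v_q$, and $p\perp_R q$ implies $\mathbf v_p\cdot\mathbf v_q=0$.
\begin{itemize}
\item \emph{Line rule.} This follows directly from the second identity, using the fact that two unit vectors with inner product $1$ coincide (Cauchy--Schwarz equality).
\item \emph{Saturation rule.} This is immediate, since $\sim$ is realized as actual equality of vectors.
\item \emph{Rectangle transfer rule.} The hypothesis certifies that $\mathbf v_r\cdot\mathbf v_s=\delta(\{r,s\})$. If $\delta=0$, this holds because either a zero vector is involved or the induction hypothesis applies. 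If $\delta=1$, it holds because $\mathbf v_r=\mathbf v_s$ is a unit vector. The rectangle identity then yields $\mathbf v_p\cdot\mathbf v_q=\delta(\{p,q\})$. When this value is $1$, both cells are occupied (the pair is a selected 2-edge), so Cauchy--Schwarz gives $\mathbf v_p=\mathbf v_q$. When it is $0$ and both cells are occupied, we get orthogonality.
\end{itemize}
One should check that generating $\sim$ as an equivalence relation (by reflexivity, symmetry and transitivity) preserves equality of vectors, which is trivial.

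Next, apply the three clauses of $(RW3)$. By clause 1, each selected edge $e$ determines a single unit vector $\mathbf w_e$, namely the common vector of its halves. By clause 3, for distinct $e,f$ there are $p\in\operatorname{supp}(e)$ and $q\in\operatorname{supp}(f)$ with $p\perp_R q$. Soundness then gives $\mathbf w_e\cdot\mathbf w_f=\mathbf v_p\cdot\mathbf v_q=0$. (Clause 2 is automatically consistent with this and is not needed beyond guaranteeing that the edges are genuinely distinct.) So the vectors $\{\mathbf w_e\}$ are $|E_1|+|E_2|$ mutually orthogonal unit vectors in $\mathbb R^r$, forcing $r\ge |E_1|+|E_2|$. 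The defining representation (1) gives the reverse inequality, so $\operatorname{SOS}(P_G)=|E_1|+|E_2|$.

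The main obstacle is making the coefficient bookkeeping in the identities airtight, with the factors of $2$ and $4$ handled correctly. For the line identity, the monomial $x_i^2y_jy_\ell$ receives coefficient $2$ from $\sum_t f_t^2$ and coefficient $2$ from a degenerate 2-edge square. For the rectangle identity, the monomial $x_ix_ky_jy_\ell$ receives $2(\mathbf v_{ij}\cdot\mathbf v_{k\ell}+\mathbf v_{i\ell}\cdot\mathbf v_{kj})$ from the representation and $2\delta$ from each selected diagonal. Simplicity $(S)$ is needed here: it guarantees that no other edge contributes to these monomials, and that an occupied cell has norm exactly $1$.
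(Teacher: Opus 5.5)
Your proposal is correct and follows the paper's proof essentially step for step. Both arguments prove soundness of the line, saturation, and rectangle-transfer rules by induction on derivation depth. Both then attach one unit vector to each selected edge and use pairwise orthogonality to force $r\ge|E_1|+|E_2|$.

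Your one deviation is the remark that clause 2 is redundant, since mutually orthogonal unit vectors are automatically distinct. That remark is correct. The paper instead invokes clause 2 explicitly to get $|E_1|+|E_2|$ distinct vectors.
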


\begin{proof}
Let
\[
P_G=\sum_{t=1}^r f_t^2,\qquad
f_t=\sum_{(i,j)\in \Omega} v_{ij}^{(t)}x_i y_j,
\]
be any SOS representation, and write $\mathbf v_{ij}=(v_{ij}^{(1)},\dots,v_{ij}^{(r)})^\top$. For occupied cells, $\|\mathbf v_{ij}\|=1$. We claim:
\begin{enumerate}
    \item if $p\sim q$, then $\mathbf v_p=\mathbf v_q$;
    \item if $p\perp_R q$, then $\mathbf v_p\cdot \mathbf v_q=0$.
\end{enumerate}
This is proved by induction on the closure depth. The saturation rule is immediate from equality of vectors.

For the line rule, if two occupied cells $p,q$ lie in one row or one column, then the coefficient of either $x_i^2y_jy_\ell$ or $x_ix_ky_j^2$ gives
\[
\mathbf v_p\cdot \mathbf v_q=\delta(\{p,q\}).
\]
Indeed, simplicity implies that the only way to obtain coefficient $2$ on that mixed term is for $\{p,q\}$ itself to be a selected degenerate $2$-edge.

For the rectangle transfer rule, the rectangle identity gives
\[
\mathbf v_p\cdot \mathbf v_q+\mathbf v_r\cdot \mathbf v_s
\;=\;
\delta(\{p,q\})+\delta(\{r,s\}).
\]
If the companion diagonal $\{r,s\}$ is certified to have value $\delta(\{r,s\})$, then the target diagonal $\{p,q\}$ has inner product $\delta(\{p,q\})$. Hence:
\begin{itemize}
    \item if $\delta(\{p,q\})=0$, then $\mathbf v_p\cdot \mathbf v_q=0$;
    \item if $\delta(\{p,q\})=1$, then $\mathbf v_p\cdot \mathbf v_q=1$, and since both vectors are unit vectors, $\mathbf v_p=\mathbf v_q$.
\end{itemize}
So the claim follows.

Now each selected edge determines one nonzero unit vector: a $1$-edge gives its unique occupied cell vector, and a $2$-edge gives the common vector of its two halves. By injectivity of the $\sim$-classes for selected edges, these are exactly $|E_1|+|E_2|$ distinct vectors. By the orthogonality condition in recursive rectangle admissibility, any two of them are orthogonal. Therefore $\mathbb R^r$ contains $|E_1|+|E_2|$ nonzero mutually orthogonal vectors, so
\[
r\ge |E_1|+|E_2|.
\]
The defining decomposition of $P_G$ already shows $\operatorname{SOS}(P_G)\le |E_1|+|E_2|$, hence equality holds and $P_G$ is irreducible.
\end{proof}

\begin{remark}
The recursive certificate $(RW3)$ should not be described as a strictly weaker version of the weak framework. The $5\times 4$ example above shows that $(RW3)$ can certify a configuration rejected by the literal weak cross-cell test $(W3)$, while the weak $6\times 3$ complementary-diagonal construction from \cite{qlc26} goes in the other direction: it is accepted by the weak argument, but its simultaneous complementary resolution is not captured by the present least-fixed-point rectangle-transfer rule alone. Thus the weak framework and $(RW3)$ alone are logically incomparable, which is why the final recursive-line parameter uses the strengthened closure $(RW3^+)$ rather than $(RW3)$ itself.
\end{remark}

\begin{definition}[Complementary-pair strengthening $(RW3^+)$]
Starting from the line-and-rectangle certificate closure, adjoin the following additional rule:
\begin{enumerate}
    \setcounter{enumi}{3}
    \item \textbf{Complementary-pair rule.} If $\{p,q\}\in E_2$ and $\{r,s\}\in E_2$ are the two diagonals of a genuine rectangle, then both selected diagonals are resolved simultaneously:
    \[
    p\sim q,\qquad r\sim s.
    \]
\end{enumerate}
We say that $G$ satisfies \emph{strengthened recursive rectangle admissibility} if the resulting closure satisfies the same three conditions as recursive rectangle admissibility. We denote this strengthened condition by $(RW3^+)$.
\end{definition}

\begin{proposition}
If $G=(S,T,E_1\cup E_2)$ satisfies $(RW3^+)$, then the associated doubly simple biquadratic form $P_G$ is irreducible. In particular,
\[
\operatorname{SOS}(P_G)=|E_1|+|E_2|.
\]
\end{proposition}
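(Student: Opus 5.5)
The plan is to rerun the proof of the preceding theorem for $(RW3)$ verbatim and add one new inductive case for the complementary-pair rule. Fix an arbitrary SOS representation $P_G=\sum_{t=1}^r f_t^2$ with coefficient vectors $\mathbf v_{ij}\in\mathbb R^r$. As before, $\|\mathbf v_p\|=1$ for occupied $p$ and $\mathbf v_p=0$ for unoccupied $p$. We prove by induction on closure depth that $p\sim q$ implies $\mathbf v_p=\mathbf v_q$, and that $p\perp_R q$ implies $\mathbf v_p\cdot\mathbf v_q=0$. The line, saturation and rectangle transfer rules are handled exactly as in the previous proof, since those arguments use only the coefficient identities and the inductive hypothesis, not the absence of other rules. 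The final counting step also transfers unchanged: conditions 1--3 give $|E_1|+|E_2|$ distinct, pairwise orthogonal unit vectors, so $r\ge|E_1|+|E_2|$. The defining representation gives the reverse inequality.

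The only new step is the complementary-pair rule. Suppose $\{p,q\}$ and $\{r,s\}$ are both selected $2$-edges forming the two diagonals of a genuine rectangle. The rectangle identity, read off from the coefficient of the corresponding $x_ix_ky_jy_\ell$ monomial, gives
\[
\mathbf v_p\cdot\mathbf v_q+\mathbf v_r\cdot\mathbf v_s=\delta(\{p,q\})+\delta(\{r,s\})=2 .
\]
Simplicity is needed here: it guarantees that no other selected edge contributes to that monomial, so the right-hand side is exactly $2$. All four cells are occupied, so all four vectors are unit vectors. Cauchy--Schwarz then gives $\mathbf v_p\cdot\mathbf v_q\le 1$ and $\mathbf v_r\cdot\mathbf v_s\le 1$. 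Since the two terms sum to $2$, both must equal $1$. Equality in Cauchy--Schwarz for unit vectors forces $\mathbf v_p=\mathbf v_q$ and $\mathbf v_r=\mathbf v_s$, which is exactly the content of the rule.

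The only point needing care is this rectangle coefficient identity in the case where both diagonals are selected. One must check that the monomial $x_ix_ky_jy_\ell$ gets coefficient $2\cdot 1$ from each of the two squares $(x_iy_j+x_ky_\ell)^2$ and $(x_iy_\ell+x_ky_j)^2$. One must also check that no $1$-edge square or other $2$-edge square contributes to it, and this again follows from $(S)$. Once the normalization matches the one used for the rectangle transfer rule, the argument is purely the Cauchy--Schwarz saturation above. The proposition then follows.
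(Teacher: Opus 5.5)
Your proposal is correct and matches the paper's proof: you reuse the $(RW3)$ closure-depth induction unchanged, and you handle the complementary-pair rule via the rectangle identity $\mathbf v_p\cdot\mathbf v_q+\mathbf v_r\cdot\mathbf v_s=2$ together with the bound of $1$ on each inner product of unit vectors. Your explicit check that simplicity $(S)$ fixes the right-hand side at exactly $2$ and the norms at $1$ is a detail the paper leaves implicit.
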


\begin{proof}
The proof is the same as for $(RW3)$ once one checks the new rule. If $\{p,q\}$ and $\{r,s\}$ are complementary selected diagonals of a rectangle, then the rectangle identity gives
\[
\mathbf v_p\cdot \mathbf v_q+\mathbf v_r\cdot \mathbf v_s=2.
\]
All four vectors are unit vectors, so each inner product is at most $1$. Therefore both inner products are equal to $1$, and hence $\mathbf v_p=\mathbf v_q$ and $\mathbf v_r=\mathbf v_s$. Thus the complementary-pair rule is sound, and the rest of the irreducibility argument is unchanged.
\end{proof}

\begin{proposition}\label{prop:weak-implies-rw3plus}
Every weak admissible limited augmented bipartite graph satisfies the strengthened condition $(RW3^+)$.
\end{proposition}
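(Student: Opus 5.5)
The plan is to verify the three clauses of strengthened recursive rectangle admissibility directly from the weak conditions. Orthogonality comes first, since it does not depend on identification; then the halves of each $2$-edge are identified by induction along the acyclic dependency graph; finally, distinctness of classes follows from a soundness argument applied to the defining decomposition.

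\emph{Step 1 (orthogonality of distinct edges).} Let $e\neq f$ be selected edges.
\begin{itemize}
\item Suppose some $p\in\operatorname{supp}(e)$ and $q\in\operatorname{supp}(f)$ share a row or column. Then the line rule applies to $\{p,q\}$. Since $p,q$ belong to different edges, $\{p,q\}\notin E_2$ by simplicity $(S)$, so $\delta=0$ and $p\perp_R q$.
\item Otherwise $e,f$ are vertex-disjoint. If both are $1$-edges, $(W1)$ ($C_4$-freeness) means the rectangle on $p,q$ has a free opposite corner, and rectangle transfer gives $p\perp_R q$.
\item If at least one of $e,f$ is a $2$-edge, the negation of $(W3)$ supplies an unoccupied cell $(a,b)\in O(e,f)$, say $a\in R(e)$ and $b\in C(f)$. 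Choose $p=(a,c)\in\operatorname{supp}(e)$ and $q=(d,b)\in\operatorname{supp}(f)$. By vertex-disjointness $a\neq d$ and $c\neq b$, so these form a genuine rectangle whose other diagonal $\{(a,b),(d,c)\}$ has an unoccupied corner, hence is certified with value $0$. Since $\{p,q\}\notin E_2$, rectangle transfer gives $p\perp_R q$.
\end{itemize}
This yields clause 3. It also fixes, for each pair of distinct edges, a certified orthogonal pair of representatives, which will be propagated by saturation once the halves are identified.

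\emph{Step 2 (identification of halves).} Degenerate $2$-edges are identified by the line rule, since there $\delta=1$. For a nondegenerate $e=\{p,q\}$ with opposite cells $r,s$, argue as follows.
\begin{itemize}
\item If one of $r,s$ is unoccupied, rectangle transfer with companion value $0$ gives $p\sim q$.
\item If $\{r,s\}\in E_2$, the complementary-pair rule applies.
\item Otherwise both $r,s$ are occupied. By $(W2')$ they are not both $1$-edges, so $e$ has outgoing dependency arcs to the $2$-edge(s) containing $r$ or $s$. By $(W2)$ the contracted dependency graph is acyclic, so we proceed by induction in reverse topological order, resolving sinks first. By induction, the edges $f\ni r$ and $g\ni s$ already have their halves identified; here $f\neq g$, since otherwise $\{r,s\}\in E_2$. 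By Step 1 there exist $r'\in\operatorname{supp}(f)$ and $s'\in\operatorname{supp}(g)$ with $r'\perp_R s'$, and saturation upgrades this to $r\perp_R s$. Thus the companion diagonal is certified with value $0$, and rectangle transfer gives $p\sim q$.
\end{itemize}
Contracted complementary pairs are exactly the case handled by the complementary-pair rule, so the contraction in $(W2)$ matches the rule set of $(RW3^+)$. This last case is the delicate one: the companion orthogonality $r\perp_R s$ must not secretly depend on $e$ itself. This is where the ordering matters, together with the observation that Step 1 uses no identifications at all.

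\emph{Step 3 (distinct classes).} Apply the soundness proof of the preceding proposition to the defining decomposition $(1)$ of $P_G$, in which each selected edge has its own standard basis vector. Soundness shows that $p\sim q$ forces $\mathbf v_p=\mathbf v_q$. In this representation that happens only when $p,q$ lie in the same selected edge. Hence distinct selected edges lie in distinct $\sim$-classes, which is clause 2 and completes the verification of $(RW3^+)$.
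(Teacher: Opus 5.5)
There is a genuine gap in the second bullet of your Step~1. Condition $(W1)$ says only that the two cross cells $c=(i,\ell)$ and $d=(k,j)$ of two vertex-disjoint $1$-edges $p=(i,j)$, $q=(k,\ell)$ are not \emph{both} $1$-edges. Either of them may still be occupied by a half of a selected $2$-edge. So an unoccupied corner need not exist, and one rectangle transfer is not available. The rule requires the companion diagonal $\{c,d\}$ to be certified, i.e.\ $c\perp_R d$.

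For example, let $c$ be a half of a $2$-edge $g$ and let $d$ be a $1$-edge. The only rectangle with $\{c,d\}$ as a diagonal is the one on $p,q$ itself, so that route is circular. What $(W3)$ gives, through the empty-cross-cell rectangle of your third bullet, is orthogonality of $d$ with \emph{some} half of $g$, possibly the other half $c'$. Transporting this to $c$ needs saturation through $c\sim c'$, so $g$ must already be resolved. Your claim that ``Step~1 uses no identifications at all'' therefore fails exactly in this case.

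The repair is to treat pairs of $1$-edges after Step~2, as the paper does:
\begin{itemize}
\item If a cross cell is empty, transfer directly.
\item The cross cells cannot be the two halves of one $2$-edge. That nondegenerate $2$-edge would have both opposite cells equal to the $1$-edges $p,q$, violating $(W2')$. Alternatively, once that edge is resolved, this case has a value-one companion.
\item Otherwise $c,d$ lie in distinct edges $g,h$, which are not both $1$-edges by $(W1)$. The line rule or your $(W3)$-rectangle certifies orthogonality of some representatives of $g$ and $h$. Saturation through the now-resolved halves gives $c\perp_R d$, and a second rectangle transfer gives $p\perp_R q$.
\end{itemize}

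This creates no circularity. By $(W2')$, Step~2 only ever invokes Step~1 for a pair $f,g$ that is not a pair of $1$-edges. For such pairs your first and third bullets are genuinely identification-free.

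With this reordering the rest of your argument goes through. This includes the reverse-topological resolution in Step~2 and the soundness-based proof of distinct classes in Step~3. The latter is a valid alternative to the paper's direct observation that every identification clause joins the two halves of a single selected $2$-edge.
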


\begin{proof}
{
Let $G=(S,T,E_1\cup E_2)$ be weak admissible. We verify the three defining conditions of $(RW3^+)$.

First, every selected $2$-edge is resolved. Row-degenerate and column-degenerate $2$-edges are resolved by the line rule, and every mutually complementary pair of nondegenerate selected $2$-edges is resolved by the complementary-pair rule. Contract these complementary pairs in the weak $(W2)$ dependency graph. By weak admissibility the contracted graph is acyclic, so we may process its vertices in reverse topological order.

Let $e=(i,j;k,\ell)$ be the current nondegenerate selected $2$-edge and write $c=(i,\ell)$, $d=(k,j)$ for the opposite diagonal. If one of $c,d$ is unoccupied, then one rectangle transfer resolves $e$. Otherwise both are occupied. By $(W2')$ they are not both 1-edges. Every selected 2-edge occupying one of them is either degenerate, hence already resolved, or an outgoing dependency of $e$, hence already resolved by the reverse-topological induction. Let $f,g$ be the selected edges containing $c,d$. If resolved representatives of $f,g$ share a row or a column, then the line rule and saturation give $c\perp_R d$. Otherwise $f,g$ are vertex-disjoint and at least one is a 2-edge, so $(W3)$ and Lemma~\ref{lem:empty-cross-cell} give $c\perp_R d$. The rectangle of $e$ then resolves $e$.

Second, distinct selected edges remain in distinct equivalence classes, because every value-one identification introduced by the rules joins only the two halves of one selected 2-edge. Simplicity keeps supports of distinct selected edges disjoint, so different selected edges cannot collapse into one equivalence class.

Third, let $e\neq f$ be selected edges. If representatives of $e$ and $f$ share a row or column, then the line rule gives the required orthogonality. Otherwise, if at least one of $e,f$ is a 2-edge, then $e$ and $f$ are vertex-disjoint and their opposite-cell set contains an unoccupied cell by weak admissibility, so Lemma~\ref{lem:empty-cross-cell} gives the orthogonality certificate. Finally consider two 1-edges $e=(i,j)$ and $f=(k,\ell)$, with cross cells $c=(i,\ell)$ and $d=(k,j)$. If one of $c,d$ is unoccupied, Lemma~\ref{lem:empty-cross-cell} applies directly. If $c,d$ are the two halves of one resolved selected $2$-edge, then the rectangle on rows $i,k$ and columns $j,\ell$ has companion diagonal of value one and therefore gives $e\perp_R f$. Otherwise let $g,h$ be the distinct resolved selected edges containing $c,d$. By $(W1)$ they are not both 1-edges. If representatives of $g,h$ share a row or column, then the line rule and saturation give $c\perp_R d$; otherwise $g,h$ are vertex-disjoint and at least one is a 2-edge, so $(W3)$ and Lemma~\ref{lem:empty-cross-cell} again give $c\perp_R d$. A final rectangle transfer yields $e\perp_R f$.
}
\end{proof}

\begin{lemma}\label{lem:empty-cross-cell}
{
Suppose two distinct selected edges $e,f$ are already resolved, where a 1-edge is regarded as resolved automatically. If $e$ and $f$ are vertex-disjoint and their opposite-cell set contains an unoccupied cell, then the line-and-rectangle closure certifies orthogonality of representatives of $e$ and $f$.
}
\end{lemma}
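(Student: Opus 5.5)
The plan is to certify the orthogonality with a single rectangle transfer whose companion diagonal passes through the given unoccupied cell. After that, the resolvedness hypothesis and the saturation rule turn this into orthogonality of \emph{all} representatives of $e$ and $f$.

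First I fix the empty cell. By the definition of $O(e,f)$ and symmetry in $e,f$, I may assume the unoccupied cell is $r=(a,b)$ with $a\in R(e)$ and $b\in C(f)$. Since $a$ is a row index appearing in $\operatorname{supp}(e)$, there is a cell $p=(a,c)\in\operatorname{supp}(e)$, so $c\in C(e)$. Since $b$ is a column index appearing in $\operatorname{supp}(f)$, there is a cell $q=(d,b)\in\operatorname{supp}(f)$, so $d\in R(f)$. Vertex-disjointness of $e$ and $f$ means $R(e)\cap R(f)=\emptyset$ and $C(e)\cap C(f)=\emptyset$. Hence $a\neq d$ and $c\neq b$, so rows $a,d$ and columns $b,c$ span a genuine rectangle. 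Its diagonals are $\{p,q\}=\{(a,c),(d,b)\}$ and $\{r,s\}$ with $s=(d,c)$.

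Next I check the hypotheses of the rectangle transfer rule for the companion diagonal $\{r,s\}$. Every half of a selected $2$-edge is an occupied cell, and $r$ is unoccupied, so $\{r,s\}\notin E_2$ and $\delta(\{r,s\})=0$. Since one of $r,s$ is unoccupied, the diagonal $\{r,s\}$ is certified to have value $0$. The rule then certifies $\{p,q\}$ with value $\delta(\{p,q\})$. Both $p$ and $q$ are occupied, so it remains only to show that $\delta(\{p,q\})=0$.

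This is the one point needing care, and I would argue it from simplicity $(S)$. Suppose $\{p,q\}$ were a selected $2$-edge $g$. Then $g$ uses the cell $p$, and so does $e$, so by $(S)$ we get $g=e$. Likewise $g$ uses $q$, so $g=f$. This contradicts $e\neq f$. Hence $\delta(\{p,q\})=0$ and $p\perp_R q$.

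Finally, to pass to arbitrary representatives, I use the hypothesis that $e$ and $f$ are resolved. If $e$ is a $2$-edge, its two halves satisfy $p'\sim p$, and similarly for $f$; a $1$-edge has only one cell. The saturation rule then gives $p'\perp_R q'$ for every $p'\in\operatorname{supp}(e)$ and $q'\in\operatorname{supp}(f)$. This is exactly the certified orthogonality of representatives claimed in the lemma. I expect no step to be a real obstacle: the only subtle points are ruling out $\delta(\{p,q\})=1$ via $(S)$, and noting that the resolvedness hypothesis is used only in this last saturation step.
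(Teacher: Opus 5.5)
Your proposal is correct and is essentially the paper's proof. Both use a single rectangle transfer on the rows and columns of $p$ and $q$, whose companion diagonal is certified zero by the empty cell. Both use simplicity $(S)$ to rule out $\delta(\{p,q\})=1$ (your $g=e=f$ argument just spells out the paper's one-line remark), and both finish by saturation through the resolved halves.
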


\begin{proof}
{
Suppose the empty cross cell is $(r,c)$, where row $r$ occurs in the support of $e$ and column $c$ occurs in the support of $f$; the other case is symmetric. Choose a support cell $p=(r,c_e)$ of $e$ in row $r$ and a support cell $q=(r_f,c)$ of $f$ in column $c$. Vertex-disjointness gives $r\neq r_f$ and $c_e\neq c$, so
\[
p,\ q,\ (r,c),\ (r_f,c_e)
\]
form a genuine rectangle. The companion diagonal contains the unoccupied cell $(r,c)$ and is therefore certified zero. Since simplicity prevents $p$ and $q$ from forming a third selected 2-edge, the target diagonal has value $0$, so one rectangle transfer gives $p\perp_R q$. Saturation through the resolved halves gives the desired edge orthogonality.
}
\end{proof}

\begin{corollary}\label{cor:rw3plus-strictly-weaker}
On limited augmented bipartite graphs satisfying $(S)$ and excluding $(W1)$, $(W2)$, and $(W2')$, the strengthened certificate $(RW3^+)$ is strictly weaker than the literal weak cross-cell test $(W3)$.
\end{corollary}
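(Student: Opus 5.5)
The statement has two parts, and I will prove them in turn. First I show that on the stated class the literal test $(W3)$ implies $(RW3^+)$. Then I exhibit a graph in that class that satisfies $(RW3^+)$ but violates $(W3)$. Throughout, "weaker" means the admissibility requirement is less restrictive, so $(RW3^+)$ accepts more configurations.

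For the implication, fix a limited augmented graph $G$ that satisfies $(S)$, is $C_4$-free in $G_1$, and avoids $(W2)$ and $(W2')$. If in addition $G$ passes $(W3)$, then $G$ is weak admissible by definition. Proposition~\ref{prop:weak-implies-rw3plus} then gives $(RW3^+)$ directly. That proof uses exactly $(W1)$, $(W2)$, $(W2')$, $(W3)$ and Lemma~\ref{lem:empty-cross-cell}, and all of these are available under the present hypotheses. So on this class, every graph accepted by $(W3)$ is accepted by $(RW3^+)$.

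For strictness I use the $5\times4$ configuration from the gap theorem, with
\[
E_2=\{(3,1;4,2),(5,2;3,3),(4,3;2,4)\}.
\]
I check the standing hypotheses in order.
\begin{enumerate}
\item Simplicity $(S)$ holds because the six 2-edge cells are distinct and lie outside $E_1$.
\item $(W1)$ fails because $E_1$ was shown to be extremal and $C_4$-free.
\item $(W2')$ and $(W2)$ both fail because, for each nondegenerate 2-edge, one opposite cell is unoccupied. For $(3,1;4,2)$ the opposite cells are $(3,2)$ and $(4,1)$, and $(3,2)$ is empty. For $(5,2;3,3)$ they are $(5,3)$ and $(3,2)$, and $(3,2)$ is empty. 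For $(4,3;2,4)$ they are $(4,4)$ and $(2,3)$, and $(2,3)$ is empty. Hence the dependency graph has no edges at all.
\end{enumerate}
The remark following the gap theorem already shows that $(W3)$ fails, witnessed by the pair $(1,2)$ and $(4,3;2,4)$.

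It remains to show that this graph satisfies $(RW3^+)$. I will transcribe the gap theorem's proof into the closure rules. Each 2-edge's halves are identified by one rectangle transfer, because the companion diagonal contains an empty cell. The 38 same-line orthogonalities come from the line rule together with saturation. The 40 tabulated orthogonalities come from rectangle transfers, taken in the order of the table. Each row's companion is either an unoccupied cell or an orthogonality certified earlier, possibly through $\sim$-representatives. Condition 2 of $(RW3)$ holds because the only identifications made join the two halves of one 2-edge, and supports of distinct edges are disjoint. Condition 3 is exactly the full list of 78 pairs.

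The main obstacle is this last verification. In the table, vectors $u,v,w$ stand for classes, so each rectangle transfer must be applied to concrete cells and then saturated. For example, the entry $\langle a,w\rangle$ with companion $\langle b,u\rangle$ uses cells $(1,2),(4,3)$ against $(1,3),(4,2)$. I would check each row's cells against the grid to confirm the rectangle is genuine and the target diagonal is not itself a selected 2-edge. If a row fails, I would pick the alternate half of the 2-edge, or rely on the exhaustive checker in \cite{reproducibility}.
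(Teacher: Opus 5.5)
Your proposal is correct and follows the paper's proof. The inclusion comes from Proposition~\ref{prop:weak-implies-rw3plus}, since $(W3)$ plus the standing hypotheses is exactly weak admissibility. Strictness comes from the $5\times4$ example of Proposition~\ref{prop:rw3-not-w3-54}, whose $40$-line table does go through in the listed order when each entry is read on concrete cells and saturated through the three initial identifications. Your explicit check that this example satisfies $(S)$ and avoids $(W1)$, $(W2)$, $(W2')$ is correct: each nondegenerate $2$-edge has an empty opposite cell, so the dependency graph is edgeless. The paper's two-line proof leaves this check implicit, but it is needed for the example to lie in the stated class.
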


\begin{proof}
If a graph excludes the forbidden weak cross-cell pattern $(W3)$ together with the common weak conditions, then it is weak admissible, so Proposition~\ref{prop:weak-implies-rw3plus} gives $(RW3^+)$. The inclusion is strict because Proposition~\ref{prop:rw3-not-w3-54} gives an explicit $5\times 4$ graph satisfying $(RW3)$, hence also $(RW3^+)$, while failing the literal weak cross-cell test $(W3)$.
\end{proof}

\begin{remark}
The strengthened certificate $(RW3^+)$ is therefore a single-branch refinement: it adds complementary-pair checking to the intrinsic recursive closure and becomes strictly weaker than the literal weak cross-cell test $(W3)$. Corollary~\ref{cor:rw3plus-strictly-weaker} makes the separate literal $(W3)$ branch redundant on the weak-admissible class, so the final recursive-line parameter is built from $(RW3^+)$ itself.
\end{remark}

\begin{definition}[Recursive-line Zarankiewicz number]
Let $z_{RL}(m,n)$ be the maximum total number of edges $|E_1|+|E_2|$ over all limited augmented bipartite graphs with $|E_1|=z(m,n)$ that satisfy $(S)$, whose $1$-edge graph $G_1$ is $C_4$-free, and that satisfy strengthened recursive rectangle admissibility $(RW3^+)$.
\end{definition}

\begin{proposition}\label{prop:zrl-lower}
For all $m,n\ge 2$,
\[
z_2(m,n)\ge z_{RL}(m,n)\ge z_{wL}(m,n).
\]
\end{proposition}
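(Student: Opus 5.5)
The plan is to prove the two inequalities separately, each by showing that the feasible set of one maximization contains the feasible set of the other. Both maxima range over limited augmented bipartite graphs with $|E_1|=z(m,n)$ and objective $|E_1|+|E_2|$. So it suffices to show the following two inclusions:
\[
\{\text{weak admissible}\}\subseteq\{(S),\ G_1\ C_4\text{-free},\ (RW3^+)\}\subseteq\{(S),\ G_1\ C_4\text{-free},\ P_G\text{ irreducible}\}.
\]
Each feasible set is nonempty because it contains an extremal $C_4$-free graph with $E_2=\emptyset$, so all the maxima are well defined.

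For $z_2(m,n)\ge z_{RL}(m,n)$, take any graph $G$ counted by $z_{RL}(m,n)$. It satisfies $(S)$, has $C_4$-free $G_1$, and satisfies $(RW3^+)$. By the proposition establishing soundness of $(RW3^+)$, $P_G$ is irreducible with $\operatorname{SOS}(P_G)=|E_1|+|E_2|$. Hence $G$ is admissible in the definition of $z_2(m,n)$ with the same objective value. Taking the maximum gives the inequality.

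For $z_{RL}(m,n)\ge z_{wL}(m,n)$, take any weak admissible limited graph $G$. Weak admissibility excludes $(S)$ and $(W1)$, so $G$ is simple and $G_1$ is $C_4$-free. Proposition~\ref{prop:weak-implies-rw3plus} then gives $(RW3^+)$. Thus $G$ is feasible for $z_{RL}(m,n)$ with the same edge count, and maximizing yields the claim.

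The substantive content lives in the earlier results; the one real dependency is Proposition~\ref{prop:weak-implies-rw3plus}. That proposition in turn relies on Lemma~\ref{lem:empty-cross-cell} and on the reverse-topological processing of the contracted dependency graph. The main point I would re-check is that it covers every weak admissible limited graph, including degenerate $2$-edges and complementary pairs. The complementary-pair rule is exactly what handles the latter. Once that proposition is accepted, the present statement is a two-line monotonicity argument.
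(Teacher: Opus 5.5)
Your proposal is correct and matches the paper's proof: both inequalities come from feasible-set inclusions, using the soundness of $(RW3^+)$ for $z_2\ge z_{RL}$ and Proposition~\ref{prop:weak-implies-rw3plus} for $z_{RL}\ge z_{wL}$. Your remark that weak admissibility already supplies $(S)$ and $C_4$-freeness of $G_1$, and your nonemptiness observation, make explicit what the paper leaves implicit.
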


\begin{proof}
Let $G$ be counted by $z_{RL}(m,n)$. Then $G$ satisfies $(RW3^+)$, so the proposition above shows that its associated doubly simple biquadratic form is irreducible. Hence $G$ contributes a graph counted by $z_2(m,n)$, and therefore $z_2(m,n)\ge z_{RL}(m,n)$.

Moreover, every weak admissible graph satisfies $(RW3^+)$ by Proposition~\ref{prop:weak-implies-rw3plus}. Hence every graph counted by $z_{wL}(m,n)$ is also counted by $z_{RL}(m,n)$, giving $z_{RL}(m,n)\ge z_{wL}(m,n)$.
\end{proof}

We now record the first exact family for the recursive-line parameter. Combined with the three-column upper bound from Section~3, it yields an infinite exact family simultaneously for $z_{RL}$ and $z_2$.

\subsection{The exact three-column family}

\begin{proposition}[Three-column chain lower bound]\label{prop:zrl-m3-chain}
For every $m\ge 3$,
\[
z_{RL}(m,3)\ge 2m.
\]
Consequently,
\[
z_2(m,3)=z_{RL}(m,3)=2m.
\]
\end{proposition}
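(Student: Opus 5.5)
The equality part is immediate once the lower bound is established. Theorem~\ref{thm:m3-upper} gives $z_2(m,3)\le 2m$, and Proposition~\ref{prop:zrl-lower} gives $z_{RL}(m,3)\le z_2(m,3)$. So everything reduces to the construction: for each $m\ge 3$, exhibit a limited $m\times 3$ augmented bipartite graph satisfying $(S)$, with $C_4$-free $G_1$ and $|E_1|=z(m,3)=m+3$, carrying $m-3$ pairwise disjoint selected $2$-edges and satisfying $(RW3^+)$.

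By the parity count in the proof of Theorem~\ref{thm:m3-upper}, such a configuration uses $2m-6$ of the $2m-3$ free cells, so exactly three cells stay empty.

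For $E_1$ I would take the standard extremal graph. Rows $1,2,3$ carry the column pairs $\{1,2\},\{1,3\},\{2,3\}$, so each has one hole, at $(1,3),(2,2),(3,1)$. Each remaining row $i\ge 4$ carries a single $1$-edge in a column $c_i$, and I would let $c_i$ cycle through $1,2,3$ so that the extra rows are spread evenly over the columns. Then $G_1$ is clearly $C_4$-free, and each extra row has two free cells.

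The naive choice of one row-degenerate $2$-edge per extra row fails. Two extra rows with the same $c_i$ then form a symmetric $2\times 3$ block in which the needed inner products occur only in pairs $\langle x,f\rangle+\langle e,y\rangle=0$, with no certified companion. So instead I would build a \emph{chain}. Process the extra rows in order $4,5,\dots,m$. The $2$-edges are chosen mostly as nondegenerate or column-degenerate edges joining a free cell of row $i$ to a free cell of row $i+1$, with one row-degenerate edge at the end if the parity requires it. The intent is that every $2\times 2$ rectangle meeting row $i+1$ has a companion diagonal that is already certified from rows $\le i$, with the three holes in rows $1$--$3$ as the base case.

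The verification would be by induction on $m$, and I would check it in two stages.

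\textbf{Resolution.} Show that the new $2$-edge in rows $i,i+1$ is resolved. Either its opposite diagonal contains a hole or a cell already certified orthogonal in the previous step, or it is column-degenerate and the line rule resolves it directly.

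\textbf{Orthogonality.} Show that every new selected edge is certified orthogonal to every earlier one. Edges sharing a row or column are handled by the line rule together with saturation. For vertex-disjoint pairs, I would run rectangle transfer through the cross cells. Where a cross cell is empty, Lemma~\ref{lem:empty-cross-cell} applies directly. Otherwise the cross diagonal lies in earlier rows and is certified by the induction hypothesis, exactly as in the tabulated $5\times 4$ proof.

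Small cases $m=3,4,5$, and probably one period of the cycle of $c_i$, would be checked by hand to fix the pattern. The inductive step then only has to handle a bounded number of new rectangle types, since there are only three columns. Conditions (1)--(2) of $(RW3^+)$ follow as in Proposition~\ref{prop:weak-implies-rw3plus}: identifications only ever join the two halves of one selected $2$-edge, and simplicity keeps the resulting classes distinct.

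The main obstacle is choosing the chain so that no circular pair of rectangle identities arises. With only three columns, every rectangle between two extra rows shares two columns with many others, so the order in which the certificates are derived is critical. I would first try a chain alternating nondegenerate edges $(i,a;i+1,b)$ whose opposite cell $(i,b)$ or $(i+1,a)$ is the $1$-edge of the other row. That 1-edge is already orthogonal to everything before it, which feeds the recursion. If this stalls, the fallback is to choose the chain greedily by computer for small $m$ and then extract a periodic pattern.
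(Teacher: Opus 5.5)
\textbf{Verdict: there is a genuine gap.} Your reduction of the equality to the lower bound is correct and matches the paper: Theorem~\ref{thm:m3-upper} together with Proposition~\ref{prop:zrl-lower}. The problem is that the lower bound, which is the whole content of the proposition, is never supplied. You fix no configuration and produce no $(RW3^+)$ certificate, and the one concrete chain you propose to try first fails $(RW3^+)$.

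With $c_i$ cycling through $1,2,3$, two consecutive edges of the same kind in your description collide in a cell, so the kinds must alternate, as you suggest. But then each consecutive pair blocks the other. For example, with $c_i=1$, $c_{i+1}=2$, $c_{i+2}=3$ you are led to $(i,3;i+1,1)$ and $(i+1,3;i+2,1)$:
\begin{itemize}
\item resolving the first needs $(i,1)\perp_R(i+1,3)$, which is obtainable only by first identifying $(i+1,3)\sim(i+2,1)$ and then using column~$1$;
\item resolving the second needs $(i+1,1)\perp_R(i+2,3)$, which is obtainable only after $(i+1,1)\sim(i,3)$.
\end{itemize}
The two edges are not complementary diagonals of one rectangle. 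So neither the transfer rule nor the complementary-pair rule ever fires, and the closure stalls. The variant in which both opposite cells are $1$-edges is a $(W2')$ pattern and stalls the same way. Your inductive scheme also understates the work. A rectangle meeting row $i+1$ has both diagonals meeting row $i+1$, so its companion is never simply certified from rows $\le i$. In a working construction the certificates move between rows in both directions.

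The missing idea is the opposite of spreading the singletons out. The paper puts every extra $1$-edge in the same column, $(i,2)$ for $4\le i\le m$. It uses the chain $e_i=(i,1;i+1,3)$ for $3\le i\le m-1$, which starts in base row~$3$ and leaves the holes $(1,3)$, $(2,2)$, $(m,1)$. The verification then runs as follows.
\begin{itemize}
\item The dependence is one-directional. The edge $e_{m-1}$ is grounded by the hole $(m,1)$. Each $e_i$ follows from $e_{i+1}$, because $(i+1,1)\sim(i+2,3)$ and $(i,3)\perp_R(i+2,3)$ by the column-$3$ line rule.
\item The chain vectors $a_i$ are mutually orthogonal through column~$3$. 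The $1$-edges in columns $1$ and $3$ are orthogonal to them by line rules, and $(1,2)$ is orthogonal to them via the hole $(1,3)$.
\item The nontrivial step is $g_{k,i}=\langle u_k,a_i\rangle=0$ for the column-$2$ vectors $u_k$. Two rectangles (rows $k,i$ in columns $1,2$, and rows $k+1,i$ in columns $2,3$) give $g_{k,i}=g_{k+1,i-1}$ for $k\le i-2$. One rectangle (rows $i+1,k$ in columns $2,3$) gives $g_{k,i}=-g_{i+1,k-1}$ for $k\ge i+2$. The transport ends at the same-row zeros $g_{i,i}=g_{i+1,i}=0$ and at $g_{i-1,i}=-g_{i,i-1}=0$.
\item Only after this do the remaining $1$-edge pairs close through their companion diagonals.
\end{itemize}
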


\begin{proof}
For every $m\ge 3$, set
\[
E_1=\{(1,1),(1,2),(2,1),(2,3),(3,2),(3,3)\},\qquad E_2=\emptyset.
\]
If $m\ge 4$, enlarge this by adjoining the singleton edges $(i,2)$ for $4\le i\le m$ and the chain of $2$-edges
\[
E_2=\{(i,1;i+1,3):3\le i\le m-1\}.
\]
Then $|E_1|=m+3=z(m,3)$, $|E_2|=m-3$, and therefore the total number of selected edges is $2m$. The only unoccupied cells are
\[
(1,3),\qquad (2,2),\qquad (m,1).
\]
The column supports of $E_1$ are $C_1=\{1,2\}$, $C_2=\{1,3,4,\dots,m\}$, and $C_3=\{2,3\}$, so $G_1$ is $C_4$-free.

It remains to check $(RW3^+)$. If $m=3$, every selected edge is a $1$-edge. Pairs on a common line are handled by the line rule, and otherwise the companion diagonal of the rectangle contains a hole because $G_1$ is $C_4$-free. Thus $(RW3^+)$ holds when $m=3$.

Assume now $m\ge 4$ and write $e_i=(i,1;i+1,3)$ for $3\le i\le m-1$. Resolve these selected $2$-edges from right to left. The terminal edge $e_{m-1}$ is resolved by the rectangle on rows $m-1,m$ and columns $1,3$, whose companion diagonal contains the hole $(m,1)$. If $e_{i+1}$ has been resolved, then $(i+1,1)\sim(i+2,3)$. Since $(i,3)$ and $(i+2,3)$ share column $3$ and belong to distinct selected edges, the line rule and saturation give $(i,3)\perp_R(i+1,1)$, so the rectangle on rows $i,i+1$ and columns $1,3$ resolves $e_i$. Thus all selected $2$-edges are resolved.

For $m\ge 4$, no complementary selected pair occurs, and the only pairs with $\delta=1$ are the chosen diagonals in $E_2$, so distinct selected edges remain in distinct $\sim$-classes. Let $a_i$ be the common vector of $e_i$. Distinct chain edges are orthogonal by column $3$. One-edges in columns $1$ and $3$ are orthogonal to every $a_i$ by the line rule, and $(1,2)$ is orthogonal to every $a_i$ by the rectangle on rows $1,i+1$ and columns $2,3$, whose companion diagonal contains the hole $(1,3)$.

For $k=3,\dots,m$, let $u_k$ be the vector at $(k,2)$ and put $g_{k,i}=\langle u_k,a_i\rangle$ for $3\le i\le m-1$. Same-row orthogonality gives the grounded values
\[
g_{i,i}=g_{i+1,i}=0.
\]
If $k=i-1$, the rectangle on rows $(i-1,i)$ and columns $(1,2)$ gives $g_{i-1,i}=-g_{i,i-1}=0$. If $k\le i-2$, the rectangles on rows $(k,i)$ with columns $(1,2)$ and on rows $(k+1,i)$ with columns $(2,3)$ give $g_{k,i}=g_{k+1,i-1}$, so repeated transport reaches a grounded adjacent value. If $k\ge i+2$, the rectangle on rows $i+1,k$ and columns $2,3$ gives $g_{k,i}=-g_{i+1,k-1}$, and the right-hand side has already been settled. Therefore $g_{k,i}=0$ for all $k,i$, so every one-edge vector is orthogonal to every chain vector $a_i$.

Finally take two distinct selected $1$-edges $p,q$. If they share a line, use the line rule. Otherwise let $r,s$ be the companion diagonal of their rectangle. Since $G_1$ is $C_4$-free, $r,s$ cannot both be $1$-edges. The remaining possibilities are: one of $r,s$ is a hole; $r,s$ lie in two distinct chain edges; $r,s$ are the two halves of one chain edge; or one is a $1$-edge and the other lies in a chain edge. Each case was already settled above, so the rectangle identity gives $p\perp_R q$. Therefore the closure satisfies $(RW3^+)$, so this family is counted by $z_{RL}(m,3)$ and
\[
z_{RL}(m,3)\ge |E_1|+|E_2|=2m.
\]
Finally Proposition~\ref{prop:zrl-lower} gives $z_2(m,3)\ge z_{RL}(m,3)$,
while Theorem~\ref{thm:m3-upper} gives $z_2(m,3)\le2m$, so equality holds
throughout.
\end{proof}

\begin{corollary}[Three-column separation]\label{cor:three-column-gap}
For every $m\ge10$,
\[
z_2(m,3)=z_{RL}(m,3)>z_{wL}(m,3).
\]
For $m\ge16$, the gap is exactly
\[
z_2(m,3)-z_{wL}(m,3)
=z_{RL}(m,3)-z_{wL}(m,3)
=\left\lfloor\frac{m-3}{3}\right\rfloor.
\]
In particular, the gap divided by $m$ tends to $1/3$.
\end{corollary}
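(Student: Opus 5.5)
The plan is to combine the exact identity $z_2(m,3)=z_{RL}(m,3)=2m$ from Proposition~\ref{prop:zrl-m3-chain} with known values (or upper bounds) for the weak parameter $z_{wL}(m,3)$. The gap statement for $m\ge16$ will be taken from the exact three-column weak formula in \cite{qlc26}. I expect that formula to read $z_{wL}(m,3)=2m-\lfloor (m-3)/3\rfloor$ for $m\ge16$; equivalently, $z_{wL}(m,3)=m+3+|E_2|_{\max}$ with $|E_2|_{\max}=m-3-\lfloor(m-3)/3\rfloor$. Subtracting this from $2m$ gives the exact gap $\lfloor (m-3)/3\rfloor$. The limit $\lfloor(m-3)/3\rfloor/m\to 1/3$ then follows at once.

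For the strict inequality on the range $10\le m\le 15$, the plan is to use the finite exact weak values recorded in \cite{qlc26} and verify $z_{wL}(m,3)\le 2m-1$ in each case. If that reference only provides the formula for $m\ge16$, I would instead bound $z_{wL}(m,3)<2m$ directly. The idea is that a weak admissible configuration attaining $2m$ would use all but three cells, since $3m-(m+3)-2(m-3)=3$. I would then show that $(W3)$ together with $(W2)$ fails for such a near-saturated chain once $m\ge 10$. Concretely, I would argue as follows. With only three holes, each vertex-disjoint pair consisting of a $2$-edge and a $1$-edge in column $2$ must see a hole in its opposite-cell set. Each hole can serve only boundedly many pairs, and there are linearly many such pairs, so this fails beyond a small threshold. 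The threshold $m=10$ comes from the computed values.

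The main obstacle is the middle range $10\le m\le15$. There the separation rests on exact finite computations of $z_{wL}(m,3)$ rather than on the asymptotic formula, so I would rely on the cited tabulated values. Together with Proposition~\ref{prop:zrl-m3-chain}, this yields $z_2(m,3)=z_{RL}(m,3)=2m>z_{wL}(m,3)$ for all $m\ge10$.
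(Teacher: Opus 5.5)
Your proposal matches the paper's proof: it combines $z_2(m,3)=z_{RL}(m,3)=2m$ with the weak formula of \cite{qlc26} for $m\ge16$, and with the tabulated exact weak values $19,21,22,24,25,27$ from the same reference for $m=10,\dots,15$; the paper quotes the formula as $z_{wL}(m,3)=m+3+\lfloor(2m-4)/3\rfloor$, which is identical to your $2m-\lfloor(m-3)/3\rfloor$. Your fallback hole-counting sketch is not needed, and it would not be a proof by itself: it is phrased for the chain rather than for an arbitrary weak admissible configuration with three holes, and it takes its threshold $m=10$ from the computations.
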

\begin{proof}
Proposition~\ref{prop:zrl-m3-chain} gives $z_2(m,3)=z_{RL}(m,3)=2m$.
For $m\ge16$, the formula in \cite{qlc26} is
\[
z_{wL}(m,3)=m+3+\left\lfloor\frac{2m-4}{3}\right\rfloor,
\]
and subtraction gives the stated gap. For the remaining sizes, the exact
weak values in \cite{qlc26} give
\[
\begin{array}{c|rrrrrr}
m&10&11&12&13&14&15\\ \hline
z_2(m,3)=z_{RL}(m,3)&20&22&24&26&28&30\\
z_{wL}(m,3)&19&21&22&24&25&27\\
\text{Gap}&1&1&2&2&3&3
\end{array}
\]
and hence strict separation throughout $m\ge10$.
\end{proof}

\begin{remark}
The inclusion $z_{RL}(m,n)\ge z_{wL}(m,n)$ follows from Proposition~\ref{prop:zrl-lower}, and the corollary above shows that the gap can already be linear in the three-column family. We next turn to the smallest four-column case, where exact search shows that the recursive-line parameter is again strictly larger than the weak one. At $(5,4)$, exact search under the revised definition gives
\[
z_{RL}(5,4)=13>12=z_{wL}(5,4),
\]
and Theorem~\ref{thm:rw3-77} further gives
\[
z_{RL}(7,7)\ge 32 > 28 = z_{wL}(7,7).
\]
\end{remark}

\begin{theorem}\label{prop:zrl54}
One has
\[
z_{RL}(5,4)=13.
\]
\end{theorem}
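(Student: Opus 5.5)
The plan is to prove matching lower and upper bounds. For the lower bound, I will reuse the explicit $5\times4$ configuration from the gap theorem:
\[
E_1=\{(1,2),(1,3),(1,4),(2,1),(2,2),(3,4),(4,1),(4,4),(5,1),(5,3)\},
\]
\[
E_2=\{(3,1;4,2),(5,2;3,3),(4,3;2,4)\}.
\]
The argument given there is in fact a derivation inside the line-and-rectangle closure. Each $2$-edge is resolved by one rectangle transfer whose companion diagonal contains a hole: $(3,2)$, $(5,4)$ and $(2,3)$ respectively. The $38$ same-line orthogonalities come from the line rule, and every line of the $40$-row table is a rectangle transfer whose companion is either a hole or an earlier certified pair. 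Each use of a $2$-edge vector is justified by saturation through the resolved halves.

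Distinct selected edges stay in distinct $\sim$-classes, because the only value-one identifications join the halves of one selected $2$-edge. Together with $(S)$ and the $C_4$-freeness of $G_1$ already noted there, this shows the configuration satisfies $(RW3)$, hence $(RW3^+)$. So $z_{RL}(5,4)\ge13$, which is also the content of the Proposition~\ref{prop:rw3-not-w3-54} invoked in Corollary~\ref{cor:rw3plus-strictly-weaker}.

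For the upper bound, I first apply Corollary~\ref{cor:z2-upper-bound} with $z(5,4)=10$, which gives $z_{RL}(5,4)\le z_2(5,4)\le\lfloor(20+10)/2\rfloor=15$. So it suffices to exclude $|E_2|=4$ and $|E_2|=5$. Deleting a selected $2$-edge does not obviously preserve $(RW3^+)$, since removed edges can change which cells are holes. For that reason I would not reduce monotonically to one case, but would treat $|E_2|\in\{4,5\}$ directly.

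The first reduction is a hole-counting argument modelled on Theorem~\ref{thm:m3-upper}. With $|E_2|=5$ there are no holes at all. Then in the closure the only grounding for a rectangle transfer is the line rule or a complementary pair, and I expect to show that some nondegenerate $2$-edge can never be resolved. With $|E_2|=4$ there are exactly two holes.

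If some column contains a hole, deleting that column gives
\[
\operatorname{SOS}\le (5-1)+\operatorname{BSR}(5,3).
\]
This is only useful if we also know $\operatorname{BSR}(5,3)$, which is not available. Hence the honest route is computational, as in Proposition~\ref{prop:zwl54}. For each of the three extremal $C_4$-free $E_1$ classes up to row and column permutation, I enumerate every placement of four or five disjoint $2$-edges on the ten free cells. Each placement is obtained by choosing the used cells and then a perfect matching on them; quotienting by the automorphism group of $E_1$ shrinks the list further. For each candidate I compute the least fixed point of rules 1--4 by straightforward saturation and check the three admissibility conditions, expecting every candidate to be rejected.

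The main obstacle is making this exhaustive check trustworthy and reproducible: the closure is a fixed-point computation, and the conclusion depends on its correct implementation. I would therefore describe the saturation algorithm precisely, cross-check it with an independent verifier, and deposit the rejection records in \cite{reproducibility}. A secondary goal, if it can be found, is a hand argument for the $|E_2|=5$ case.
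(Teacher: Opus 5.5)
Your proposal is correct and follows the paper's proof. The lower bound is the explicit configuration certified by the line rule and the $40$-line rectangle table (Proposition~\ref{prop:rw3-not-w3-54}). The upper bound combines the universal cell bound $15$ with an exhaustive $(RW3^+)$ closure search over one representative of each of the three extremal base classes, excluding totals $15$ and $14$ separately; as you note, this avoids needing monotonicity under deleting $2$-edges. The paper rests on two independent closure implementations with archived records, so your speculative hand argument for the hole-free $|E_2|=5$ case is not needed.
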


\begin{proof}
The explicit $5\times 4$ configuration constructed earlier has $|E_1|=10$ and $|E_2|=3$, hence total size $13$, and Proposition~\ref{prop:rw3-not-w3-54} below shows that it satisfies $(RW3)$. Therefore it is counted by $z_{RL}(5,4)$, so $z_{RL}(5,4)\ge 13$.

For the reverse inequality, enumerate the extremal $C_4$-free
$5\times4$ bases with ten $1$-edges and all disjoint families of pairs on
their remaining cells. There are $2640$ labeled bases in three row-column
isomorphism classes. Over one representative of each class, the search
finds no $(RW3^+)$-admissible total-$15$ or total-$14$ family and $124$
admissible total-$13$ families. The last count is taken before quotienting
the augmentations by base automorphisms; the three representatives
contribute $50$, $26$, and $48$ families. Both independent exact closure
implementations reproduce these counts; the bases, candidate records, and
witnesses are archived in \cite{reproducibility}.
The universal cell bound is $15$, so these exclusions give
$z_{RL}(5,4)\le13$.
\end{proof}

\begin{lemma}[Hereditary irreducibility]\label{lem:hereditary}
If a displayed irreducible SOS decomposition is given and a collection of
complete displayed squares is deleted, then the remaining displayed
decomposition is still irreducible.
\end{lemma}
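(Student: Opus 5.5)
The natural approach is a contrapositive argument on the number of squares. Write the given irreducible displayed decomposition as
\[
P=\sum_{t=1}^{N} g_t^2,\qquad \operatorname{SOS}(P)=N,
\]
and let $D\subseteq\{1,\dots,N\}$ index the deleted squares. Set
\[
Q=\sum_{t\notin D} g_t^2,\qquad R=\sum_{t\in D} g_t^2,
\]
so that $P=Q+R$ and $Q$ is displayed with $N-|D|$ squares. Irreducibility of the remaining decomposition means $\operatorname{SOS}(Q)=N-|D|$. The displayed representation already gives $\operatorname{SOS}(Q)\le N-|D|$, so only the reverse inequality needs proof.

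Suppose instead that $Q$ admits an SOS representation with $s<N-|D|$ bilinear squares, say $Q=\sum_{u=1}^{s}h_u^2$. Substituting this into $P=Q+R$ gives
\[
P=\sum_{u=1}^{s}h_u^2+\sum_{t\in D}g_t^2.
\]
This is an SOS representation of $P$ by bilinear forms with $s+|D|<N$ squares, which contradicts $\operatorname{SOS}(P)=N$. Hence $\operatorname{SOS}(Q)=N-|D|$, and the remaining displayed decomposition is irreducible.

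The only point that needs care is that $R$ is itself a sum of squares of bilinear forms, so the concatenated representation stays in the class over which $\operatorname{SOS}$ is minimized. This holds trivially because the deleted pieces are complete displayed squares, not partial terms. No coefficient identities and no use of the closure rules are needed.

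For the doubly simple setting specifically, deleting complete squares corresponds to deleting a subset of $E_1\cup E_2$. The remaining form is again of type (1), although it is not necessarily limited. The lemma therefore says that sub-configurations of irreducible configurations are irreducible. I expect no real obstacle here; the argument is the elementary subadditivity $\operatorname{SOS}(Q+R)\le \operatorname{SOS}(Q)+\operatorname{SOS}(R)$ applied with $\operatorname{SOS}(R)\le |D|$.
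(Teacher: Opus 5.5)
Your proof is correct and follows the same route as the paper's one-line argument: a shorter representation of the remaining sub-sum, with the deleted squares put back, would represent the original form with fewer than $N$ squares, contradicting irreducibility. You simply write out explicitly the subadditivity step $\operatorname{SOS}(Q+R)\le \operatorname{SOS}(Q)+|D|$ that the paper leaves implicit.
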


\begin{proof}
If the remainder admitted a shorter representation, inserting the deleted
squares back would shorten the original representation.
\end{proof}

\begin{lemma}[Base classification]\label{lem:base-ABC}
One has $z(5,4)=10$. Up to row and column permutation, the extremal
ordinary $5\times4$ supports are exactly the three bases
\begin{equation}\label{eq:ABC-bases}
\begin{aligned}
A&:\ s{:}a,\ p{:}ab,\ q{:}ac,\ r{:}ad,\ t{:}bcd,\\
B&:\ s{:}a,\ p{:}ab,\ q{:}bc,\ r{:}bd,\ t{:}acd,\\
C&:\ s{:}ab,\ p{:}ac,\ q{:}bc,\ r{:}ad,\ t{:}bd.
\end{aligned}
\end{equation}
Here, for example, $p{:}ab$ means that $(pa)^2$ and $(pb)^2$ are ordinary
squares.
\end{lemma}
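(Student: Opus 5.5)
The plan is to encode a $C_4$-free $5\times4$ support row by row: each row is a subset of the four columns $\{a,b,c,d\}$. $C_4$-freeness says exactly that no two rows contain a common pair of columns. Equivalently, the $2$-subsets of columns covered by the rows are pairwise distinct. Since there are only $\binom42=6$ such pairs, every $C_4$-free support satisfies
\[
\sum_{\text{rows } x}\binom{d_x}{2}\le 6,
\]
where $d_x$ is the degree of row $x$. The argument has two parts: use this inequality to get $z(5,4)=10$ and to restrict the degree sequences, then classify the configurations within each admissible degree sequence by a pair-covering argument.

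\textbf{Step 1 (the value $z(5,4)=10$).} Base $C$ below is an explicit $C_4$-free support with $10$ edges, so $z(5,4)\ge10$. For the upper bound, $\binom{d}{2}$ is convex, so for a fixed total degree the sum $\sum\binom{d_x}{2}$ is minimised by the most balanced degree sequence. Eleven edges on five rows force at least the sequence $(3,2,2,2,2)$, giving $3+4=7>6$. Hence $z(5,4)\le10$.

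\textbf{Step 2 (admissible degree sequences for $10$ edges).} Next I list the degree sequences with $\sum d_x=10$ and $\sum\binom{d_x}2\le6$.
\begin{itemize}
\item A row of degree $4$ uses all six pairs. The other four rows must then have degree $\le1$, giving at most $4+4<10$ edges, so this is impossible.
\item Two rows of degree $3$ already use six pairs. The remaining three rows then have degree $\le1$, giving at most $3+3+3<10$ edges, so this is impossible.
\item One row of degree $3$ uses three pairs, which forces the sequence $(3,2,2,2,1)$ with exactly six pairs used.
\item Otherwise the sequence is $(2,2,2,2,2)$, using five pairs.
\end{itemize}

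\textbf{Step 3 (classification within each sequence).} For $(2,2,2,2,2)$, the five rows are five distinct edges of $K_4$ on $\{a,b,c,d\}$, i.e.\ the complement of a single edge. All choices of the omitted edge are equivalent under column permutations. Omitting $cd$ gives base $C$.

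For $(3,2,2,2,1)$, relabel columns so that the triple row is $t$.
\begin{itemize}
\item If $t=bcd$, it covers $bc,bd,cd$. The three $2$-rows must then be exactly the three pairs through $a$, namely $ab,ac,ad$.
\item The singleton row either lies outside the triple (column $a$), which gives base $A$, or lies inside it.
\item In the latter case, relabel so that the triple is $acd$ and the singleton is $a$. The $2$-rows are then $ab,bc,bd$, which gives base $B$.
\end{itemize}
Finally, rows may be permuted freely, which accounts for the row labels $s,p,q,r,t$.

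\textbf{Step 4 (the three classes are distinct).} This is the only point needing care, and it is an invariant check. $C$ differs from $A$ and $B$ by its row-degree multiset. $A$ and $B$ differ by column degrees: $A$ has column degrees $(4,2,2,2)$, while $B$ has $(3,3,2,2)$. Neither multiset changes under row or column permutations.

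I expect the main obstacle to be the exhaustiveness of the case analysis in Steps 2--3, rather than any deep idea. The key fact making it tight is that equality $\sum\binom{d_x}2=6$ forces the $2$-rows to be precisely the complement of the triple's pairs. Once that is observed, everything else is forced up to symmetry.
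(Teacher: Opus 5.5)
Your proposal is correct and follows essentially the same route as the paper: the pair-count bound $\sum_x\binom{d_x}{2}\le 6$, then reduction to the degree sequences $(2,2,2,2,2)$ and $(3,2,2,2,1)$, then the same forced-pair classification (five distinct edges of $K_4$; or the triple plus the three pairs through the remaining column, with the singleton outside or inside the triple). The differences are minor: you get the degree restriction by convexity and case enumeration, whereas the paper uses $\binom{d}{2}\ge 2d-3$ and the identity $\sum_i\binom{d_i}{2}=5+\tfrac12\sum_i(d_i-2)^2$; and your column-degree check, $(4,2,2,2)$ versus $(3,3,2,2)$, makes explicit the non-isomorphism of $A$ and $B$ that the paper leaves implicit.
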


\begin{proof}
Write $d_i$ for the ordinary row degrees. Since $G_1$ is $C_4$-free,
$\sum_i\binom{d_i}{2}\le\binom{4}{2}=6$. The integer inequality
$\binom{d}{2}\ge 2d-3$ therefore yields $2|E_1|-15\le6$, hence
$|E_1|\le10$. The displayed bases attain ten. At ten one has
$\sum_i d_i=10$ and
\[
\sum_i\binom{d_i}{2}=5+\frac12\sum_i(d_i-2)^2\le6.
\]
Thus the degrees are either all two or $(3,2,2,2,1)$. In the first case
the five row neighborhoods are five distinct edges of $K_4$, giving $C$.
In the second, the degree-three row uses a triple of columns. Each
degree-two row must join the remaining column to a distinct member of
that triple. The singleton lies either outside or inside the triple,
giving $A$ or $B$.
\end{proof}

\begin{corollary}\label{cor:z2-54}
One has
\[
z_2(5,4)=z_{SL}(5,4)=z_{RL}(5,4)=13.
\]
\end{corollary}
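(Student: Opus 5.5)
The chain $z_2(5,4)\ge z_{SL}(5,4)\ge z_{RL}(5,4)=13$ comes from the hierarchy together with Theorem~\ref{prop:zrl54}. The middle inequality $z_{SL}\ge z_{RL}$ is the one announced in the introduction; I take it as available from the definition of $(RW3^\pm)$, which extends $(RW3^+)$. So the whole task is the upper bound $z_2(5,4)\le 13$.

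The universal cell bound of Corollary~\ref{cor:z2-upper-bound} only gives $\lfloor (20+10)/2\rfloor=15$. I therefore have to exclude irreducible limited doubly simple forms with $|E_2|=4$ or $|E_2|=5$. By Lemma~\ref{lem:hereditary}, deleting one displayed $2$-edge square from an irreducible total-$15$ decomposition leaves an irreducible total-$14$ decomposition, still limited since $E_1$ is untouched. Hence it suffices to rule out total $14$, i.e. $|E_2|=4$ with eight of the ten free cells used and exactly two holes.

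By Lemma~\ref{lem:base-ABC}, $E_1$ is one of the bases $A,B,C$ up to permutation, so we can work case by case. In each case I would look for a reducibility witness rather than rely on the $(RW3^+)$ closure, since that closure is only sufficient. The first tool is the column-deletion argument from Theorem~\ref{thm:m3-upper}. If some column contains $h$ holes, then
\[
\operatorname{SOS}(P_G)\le (5-h)+\operatorname{BSR}(5,3).
\]
By itself this is too weak, because $\operatorname{BSR}(5,3)$ is not known to be small. I would therefore apply the analogous row and column deletions directly to the explicit squares. Deleting a row with $h$ holes leaves at most $4-h$ squares touching it, plus a $4\times4$ remainder. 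Deleting a column leaves a $5\times3$ remainder, which must itself be irreducible by Lemma~\ref{lem:hereditary} (after removing the squares touching the deleted line). Its total then cannot exceed $z_2$-type bounds for the smaller grid, and for $5\times 3$ the same parity-and-hole argument gives at most $10$. Choosing a column that contains both holes, or two holes among few squares, should yield $\le 13$.

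The main obstacle is the configurations where the two holes lie in different rows and columns. There the counting gives nothing better than $14$, and one must exhibit an explicit shorter SOS representation. I would first try the $4$-square identity trick from Proposition~\ref{prop:universal-cell-bound}, applied to a rectangle whose corners are covered by 1-edges and 2-edges with a cancelling sign pattern. If that fails, I would use a Gram-matrix rank drop argument: find a rectangle where two 2-edges can be rotated into each other, giving an explicit decomposition with $13$ squares. These finitely many residual cases over the three bases are best dispatched by exact computation, by checking for each candidate whether a feasible Gram matrix of rank $\le 13$ exists. I would archive this alongside the earlier searches in \cite{reproducibility}.
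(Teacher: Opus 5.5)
Your reduction steps match the paper's: the lower chain $z_2\ge z_{SL}\ge z_{RL}=13$ from Proposition~\ref{prop:zsl-hierarchy} and Theorem~\ref{prop:zrl54}, the passage from total $15$ to total $14$ via Lemma~\ref{lem:hereditary}, and the split into bases $A,B,C$ via Lemma~\ref{lem:base-ABC}. The gap is the exclusion of total $14$, which is the whole content of the corollary, and your line-deletion counting cannot supply it. Deleting a column $c$ removes exactly $5-h(c)-p(c)$ displayed squares, where $p(c)$ counts selected pairs inside $c$. Bounding the $5\times3$ remainder by $10$ then gives only $T\le 15-h(c)-p(c)$. So every configuration with $h(c)+p(c)\le1$ in each column still survives at $14$, for instance two holes in different columns and no column-degenerate pair. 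The bound of $10$ on the remainder also needs more care than ``$z_2$-type bounds'', since the remainder is not limited. Row deletion is weaker still, because nothing known bounds the $4\times4$ remainder well; even $z_2(4,4)$ is open.

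For the residual cases you defer to an ``exact computation'' of whether a rank-$\le13$ Gram matrix exists. As stated, that is not a proof. It is a nonconvex rank-constrained feasibility problem over thousands of candidates, $\binom{10}{8}\cdot105=4725$ per base. A numerical search produces no certificate, and you do not say what exact shorter decomposition would be extracted.

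The missing idea is a system of local, exactly checkable reducibility certificates:
\begin{enumerate}
\item \emph{Strip overload.} Apply Lemma~\ref{lem:hereditary} to the displayed squares lying entirely inside a $2\times q$ or $q\times2$ strip and use $\operatorname{BSR}(q,2)=q+1$. For example, five squares in a $2\times3$ strip are already reducible.
\item \emph{Hole pressure.} For two-column sets $D$, one has $h(D)+p(D)\le 3m+1-T$.
\item \emph{Product relations.} A formally nonzero relation $\sum_{i<j}c_{ij}f_if_j=0$ among displayed forms, built from $(x_iy_j)(x_ky_\ell)=(x_iy_\ell)(x_ky_j)$, gives a trace-zero $H$ and the rank-deficient Gram matrix $I-H/\lambda_{\max}(H)$. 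This is the rigorous form of your ``rotation'' idea and of the four-square trick once $2$-edges are involved.
\item \emph{Explicit local SOS identities} $S$ and $E$.
\end{enumerate}
These tools exclude individual pairs and pairs of pairs. A weighted counting cover then yields $2|E_2|\le7$ on base $B$ and $3|E_2|\le11$ on base $C$, while base $A$ ends in a short case analysis closed by product relations. Without such a certificate system, or a computation actually carried out that produces exact shorter decompositions for every candidate, $z_2(5,4)\le13$ remains unproved.
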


\begin{proof}
The explicit irreducible example from Section~3 gives $z_2(5,4)\ge13$, and
Theorem~\ref{prop:zrl54} gives $z_{RL}(5,4)=13$. By
Proposition~\ref{prop:zsl-hierarchy} one has $z_2\ge z_{SL}\ge z_{RL}$, so
it remains only to prove the matching unrestricted upper bound
$z_2(5,4)\le13$.

Let $G$ be an irreducible $5\times4$ doubly simple augmentation with
$|E_1|=z(5,4)=10$. If $|E_2|\ge5$, then deleting one displayed $2$-edge
square leaves, by Lemma~\ref{lem:hereditary}, an irreducible total-$14$
augmentation. Thus it is enough to exclude irreducible total-$14$
augmentations, i.e., the case $|E_2|=4$.

By Lemma~\ref{lem:base-ABC}, the extremal ordinary $5\times4$ supports
are, up to row and column permutation, the three bases
in~\eqref{eq:ABC-bases}.

For base $A$, the free cells are $s{:}bcd$, $p{:}cd$, $q{:}bd$, $r{:}bc$,
and $t{:}a$. The hole-pressure and strip-overload identities of
\hyperref[app:z254]{Appendix~B} force $ta$ to be a hole and leave only six possible core pairs
\[
pc+qd,\ pd+qb,\ pc+rb,\ pd+rc,\ qb+rc,\ qd+rb,
\]
all equivalent under simultaneous permutations of $(p,b)$, $(q,c)$, and
$(r,d)$. At most one core pair can be selected, and the remaining three
pairs are then forced to touch the distinguished row $s$. Each of the two
surviving final patterns is excluded by an explicit product relation.
Hence base $A$ admits no irreducible augmentation with four selected
pairs. The identities are recorded in \hyperref[app:z254]{Appendix~B}.

For base $B$, after the explicit one-pair local exclusions of \hyperref[app:z254]{Appendix~B}
only $20$ selected-pair positions remain. They lie in six forbidden
classes $B_1,\dots,B_6$, each containing at most one selected pair, with
weights $1,1,1,1,1,2$. The weighted class count is exactly
\[
2|E_2|+\xi_5+\xi_6+\xi_{19}+\xi_{20}\le 7,
\]
where the $\xi_i$ are the selection indicators of the remaining pair
positions. In particular $2|E_2|\le7$, contradicting $|E_2|=4$.

For base $C$, the explicit one-pair exclusions leave $25$ possible
selected-pair positions. They lie in eleven forbidden classes
$C_1,\dots,C_{11}$, each containing at most one selected pair, and the
corresponding weighted count is
\[
3|E_2|+2\eta_1\le 11,
\]
where the $\eta_i$ are the associated selection indicators. Hence
$3|E_2|\le11$, again contradicting $|E_2|=4$.

The surviving pair lists, the definitions of the classes $B_i$ and $C_i$,
and the local certificates proving that every listed class contains at
most one selected pair are given in \hyperref[app:z254]{Appendix~B}. Therefore no irreducible
total-$14$ augmentation exists, and hence no irreducible total-$15$
augmentation exists either. So $z_2(5,4)\le13$. Therefore
\[
z_2(5,4)=13=z_{SL}(5,4)=z_{RL}(5,4).
\]
\end{proof}

\begin{proposition}\label{prop:rw3-not-w3-54}
The explicit $5\times 4$ configuration from the gap theorem above satisfies both $(RW3)$ and $(RW3^+)$, but violates literal $(W3)$.
\end{proposition}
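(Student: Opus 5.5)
The proof has three parts: $(RW3)$ holds, $(RW3^+)$ follows from it, and $(W3)$ fails. The main work is to re-read the vector argument of the gap theorem as a derivation in the line-and-rectangle certificate closure. Each step there used only the coefficient identities behind the line rule and the rectangle transfer rule, so every step should translate into a closure rule.

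\emph{Resolving the $2$-edges.} For $u=(3,1;4,2)$, take the rectangle on rows $3,4$ and columns $1,2$. Its companion diagonal $\{(3,2),(4,1)\}$ contains the hole $(3,2)$ and has $\delta=0$, so one rectangle transfer gives $(3,1)\sim(4,2)$. In the same way:
\begin{itemize}
\item $v=(5,2;3,3)$ is resolved through the hole $(5,3)$? No: the rectangle on rows $3,5$ and columns $2,3$ has companion diagonal $\{(3,2),(5,3)\}$, which contains the hole $(3,2)$.
\item $w=(4,3;2,4)$ is resolved by the rectangle on rows $2,4$ and columns $3,4$, whose companion diagonal $\{(2,3),(4,4)\}$ contains the hole $(2,3)$.
\end{itemize}

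\emph{Distinct classes.} Every value-one clause joins only the two halves of a single selected $2$-edge. By simplicity the supports of distinct edges are disjoint, so distinct selected edges give distinct $\sim$-classes. This is the same argument as the second step of Proposition~\ref{prop:weak-implies-rw3plus}.

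\emph{Orthogonality.} Of the $78$ pairs of selected edges, $38$ come from a shared row or column. Each is certified by the line rule, applied to suitable halves of $u,v,w$ and, where the shared line runs through a half, moved to that half by saturation. The remaining $40$ pairs follow the table in the gap theorem, processed in its order. Each line there is a rectangle whose companion diagonal has $\delta=0$ (no companion diagonal is a selected $2$-edge, as I will check) and is already certified: it contains a hole, was handled by the line rule, or is certified by an earlier line. Whenever a target or companion involves a $2$-edge vector, saturation passes orthogonality between its two halves.

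The main obstacle is making sure that every table entry really corresponds to such a certificate. For each row I need to check that the rectangle's target corners are the named cells (or $\sim$-equivalent halves), that the target diagonal itself is not a selected $2$-edge, and that the order has no circular dependence. I would verify the entries with nonzero companions one at a time, e.g. $\langle a,w\rangle$ relies on $\langle b,u\rangle$, proved three lines above, and $\langle c,e\rangle$ relies on $\langle a,w\rangle$.

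\emph{Passing to $(RW3^+)$.} The three $2$-edges are not complementary to each other: the rectangle of $u$ has the hole $(3,2)$, and similarly for $v$ and $w$. So the complementary-pair rule never fires. Even if it did, adding rules only enlarges $\sim$ and $\perp_R$, and the value-one clauses it adds still join only halves of a single selected edge. Hence conditions~1--3 carry over from $(RW3)$ to $(RW3^+)$.

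\emph{Failure of $(W3)$.} This is the remark after the gap theorem. The $1$-edge $(1,2)$ and the $2$-edge $(4,3;2,4)$ are vertex-disjoint. Their opposite-cell set $O$ is
\[
\{(1,3),(1,4),(4,2),(2,2)\}.
\]
These cells are occupied by $b$, $c$, $u$ and $e$, so $O$ contains no unoccupied cell and the configuration violates $(W3)$.
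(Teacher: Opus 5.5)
Your proposal is correct and is essentially the paper's own argument. The paper likewise reads the three hole-based resolutions of $u,v,w$, the $38$ line-rule orthogonalities and the $40$-line rectangle table from the gap theorem as a finite $(RW3)$ certificate, then passes to $(RW3^+)$, and cites the vertex-disjoint pair $(1,2)$, $(4,3;2,4)$ for the failure of $(W3)$. The table does check out line by line in the stated order: only the lines for $\langle a,w\rangle,\langle a,h\rangle,\langle e,i\rangle,\langle g,j\rangle,\langle c,e\rangle,\langle b,f\rangle$ depend on earlier table lines, so your planned verification succeeds. Just delete the self-corrected aside in the bullet for $v$, whose companion hole is $(3,2)$.
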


\begin{proof}
The initial identifications
\[
\mathbf v_{31}=\mathbf v_{42},\qquad
\mathbf v_{52}=\mathbf v_{33},\qquad
\mathbf v_{43}=\mathbf v_{24},
\]
together with the $40$-line rectangle table in the gap proof above, form a finite line-and-rectangle certificate proving recursive rectangle admissibility. Therefore the graph satisfies $(RW3)$, and hence also the strengthened condition $(RW3^+)$. The failure of literal $(W3)$ was recorded in the remark immediately preceding Section~4. Hence this example shows that even the strengthened recursive certificate can hold when literal $(W3)$ fails.
\end{proof}

\subsection{A complementary-pair example}

After the chain family and the exact $(5,4)$ computation, it is useful to
see a different kind of optimizer that genuinely uses the complementary-pair
rule. The following example illustrates that strengthening on a $13\times 3$
grid.

\begin{example}[Complementary resolution on a $13\times3$ grid]
\label{ex:complementary13}
Let
\[
\begin{aligned}
E_1=\{&(1,1),(1,2),(2,1),(2,3),(3,2),(3,3),\\
&(4,1),(5,1),(6,1),(7,1),(8,1),\\
&(9,2),(10,2),(11,2),(12,2),(13,3)\}
\end{aligned}
\]
and
\[
\begin{aligned}
E_2=\{&(1,3;6,2),\ (2,2;5,3),\ (3,1;9,3),\ (4,2;4,3),\\
&(5,2;8,3),\ (6,3;7,2),\ (9,1;10,3),\\
&(11,1;12,3),\ (11,3;12,1),\ (13,1;13,2)\}.
\end{aligned}
\]
The supports are simple, $E_1$ is $C_4$-free, and
$|E_1|=16=z(13,3)$, $|E_2|=10$.

The selected diagonals
\[
(11,1;12,3),\qquad(11,3;12,1)
\]
form a complementary pair. The base $(RW3)$ closure cannot initiate either
identification: each rectangle transfer requires the other selected
diagonal to be resolved first, and none of these four cells belongs to
another selected edge. The complementary-pair rule resolves them
simultaneously. The resulting $(RW3^+)$ closure resolves all ten selected
$2$-edges and certifies all $\binom{26}{2}=325$ orthogonality obligations.
Both independent exact checkers verify this certificate; the witness and
verification records are archived in \cite{reproducibility}.
Thus this configuration attains the value $26$ already established by
Proposition~\ref{prop:zrl-m3-chain}.
\end{example}

Additional certified constructions at $m=10,14,15$ are retained in the
repository. Their numerical optimality follows from the same general
three-column theorem.

\subsection{Further exact finite values}

\begin{proposition}[Exact values on small grids]\label{prop:finite-values}
One has
\[
\begin{aligned}
z_{RL}(4,4)&=10,& z_{RL}(6,4)&=16,\\
z_{RL}(5,5)&=17,& z_{RL}(7,4)&=19.
\end{aligned}
\]
\end{proposition}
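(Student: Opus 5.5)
Each of the four identities will be proved as a lower bound from an explicit witness plus an upper bound from exhaustive search, following the template of Theorem~\ref{prop:zrl54}. The needed Zarankiewicz values are $z(4,4)=9$, $z(6,4)=z(7,4)=$ \v{C}ul\'{\i}k-type values ($z(6,4)=12$, $z(7,4)=13$) and $z(5,5)=12$. The universal cell bound of Proposition~\ref{prop:universal-cell-bound} then gives $\lfloor(16+9)/2\rfloor=12$, $\lfloor(24+12)/2\rfloor=18$, $\lfloor(25+12)/2\rfloor=18$ and $\lfloor(28+13)/2\rfloor=20$. So each case needs only one or two excluded totals above the claimed value.

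\textbf{Lower bounds.} For each grid I will exhibit one limited configuration with $|E_1|=z(m,n)$, $G_1$ $C_4$-free and $(S)$. The $E_2$ parts have sizes $1,4,5,6$ for totals $10,16,17,19$. I will certify $(RW3^+)$ by a finite certificate exactly as in the $5\times4$ gap theorem:
\begin{itemize}
\item first resolve the selected $2$-edges by rectangles with a hole on the companion diagonal, or by the complementary-pair rule;
\item then list the line-rule orthogonalities;
\item finally give an ordered table of zero-right-hand-side rectangle transfers, each citing a companion already certified.
\end{itemize}
Natural starting points are extensions of the $5\times4$ witness: add a row carrying further nondegenerate $2$-edges for $(6,4)$ and $(7,4)$, and add a row and a column for $(5,5)$. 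The $(4,4)$ case should need only a single $2$-edge placed on a hole-adjacent rectangle. The same tables also confirm irreducibility directly.

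\textbf{Upper bounds.} By Lemma~\ref{lem:hereditary}-style monotonicity for $(RW3^+)$, deleting a selected $2$-edge preserves admissibility: the closure only loses identifications for that edge, and its halves become holes, which certify zero diagonals. Granting this, it suffices to exclude the single total immediately above the claimed value ($11,17,18,20$). The plan is:
\begin{itemize}
\item enumerate the extremal $C_4$-free bases up to row and column permutation, extending the degree argument of Lemma~\ref{lem:base-ABC} to list all representatives;
\item for each representative, enumerate all partitions of the appropriate number of free cells into disjoint pairs;
\item run the least-fixed-point $(RW3^+)$ closure and check the three admissibility conditions;
\item reject every candidate.
\end{itemize}
Results will be reported as in Theorem~\ref{prop:zrl54}, with two independent implementations and the records archived in \cite{reproducibility}.

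\textbf{Main obstacle.} I expect the main obstacle to be the size of the search for $(7,4)$ and $(5,5)$. For $(7,4)$ there are $15$ free cells, from which $14$ must be paired into $7$ pairs, and there are several base classes. For $(5,5)$ there are $13$ free cells and $6$ pairs. I will prune using necessary local conditions that follow from soundness: reject a pair whose rectangle has both opposite cells in $E_1$ unless they can later be certified, and use the parity argument of Theorem~\ref{thm:m3-upper} where holes force counts. I will also quotient by base automorphisms and build the pairings incrementally, running the closure on partial configurations only where rejection is monotone. The correctness of this monotonicity (that failure cannot be rescued by adding pairs) is the delicate point. If it fails, I will fall back to full enumeration with symmetry reduction only.
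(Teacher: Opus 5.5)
Your plan is essentially the paper's proof. The paper likewise fixes one representative of each extremal base (one each for $(4,4)$, $(6,4)$, $(7,4)$, two for $(5,5)$), enumerates all families of disjoint pairs on the free cells, and runs the $(RW3^+)$ closure with two independent implementations. It then reads off witnesses from the positive counts and exclusions from the zero counts.

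The one real difference is your deletion lemma. The paper does not need it, because it enumerates every total from the claimed value up to the cell bound ($10$--$12$, $16$--$18$, $17$--$18$, $19$--$20$); you check only $11,17,18,20$. Your lemma is correct. By induction on derivations, every certificate between cells that stay occupied survives deleting a selected $2$-edge $\{p,q\}$. Any companion diagonal meeting $p$ or $q$ now contains a hole. A complementary partner of the deleted edge is now resolved by an ordinary transfer whose companion diagonal is $\{p,q\}$. The contrapositive of the lemma is exactly the monotonicity of rejection you flag as the delicate point, so incremental pruning on partial families is automatically sound and no fallback is needed. Likewise, a nondegenerate pair whose two opposite cells are $1$-edges can never be resolved: identifying its halves and certifying its opposite diagonal each require the other first. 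So that pruning rule needs no exception.

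One concrete correction concerns your lower-bound heuristics. The base of the $5\times4$ witness has a degree-three row, and its five rows already use all six pairs of columns. Hence no sixth row with two $1$-edges can be added without creating a $C_4$. By Theorem~\ref{thm:incidence-unique} with $N=4$, the only extremal $6\times4$ base is the incidence graph of $K_4$. A short degree count shows the only extremal $7\times4$ base is that graph plus one row of degree one, a single class rather than several. Neither contains the witness base, so the total-$16$ and total-$19$ witnesses must be found on these bases. The simplest way is to run your search at those totals too, as the paper does; it finds $6$ and $18$ accepted families. For $(5,5)$ you add only a column, not a row and a column. This is possible because the row pairs $\{2,3\}$ and $\{3,5\}$ are unused by every column of the witness base.
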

\begin{proof}
The classical extrema are $z(4,4)=9$, $z(6,4)=12$, $z(5,5)=12$,
and $z(7,4)=13$. Up to row and column permutations, there is one
base in each case except $(5,5)$, where there are two. The base
classification and the exhaustive augmentation searches are archived in
\cite{reproducibility}.

For a base with $f$ free cells, the number of families of $k$ disjoint
unordered pairs is
\[
M(f,k)=\frac{f!}{(f-2k)!\,2^k k!}.
\]
Enumerating these families and testing $(RW3^+)$ gives the following
counts. They are over one representative of each base, before quotienting
augmentations by base automorphisms. Both exact closure implementations
give the same acceptance and rejection records.
\begin{center}
\begin{tabular}{ccrr}
\toprule
Grid & Total & Candidates & Accepted\\
\midrule
$4\times4$ & 10 & 21 & 6\\
$4\times4$ & 11 & 105 & 0\\
$4\times4$ & 12 & 105 & 0\\
$6\times4$ & 16 & 51\,975 & 6\\
$6\times4$ & 17 & 62\,370 & 0\\
$6\times4$ & 18 & 10\,395 & 0\\
$5\times5$ & 17 & 540\,540 & 4\\
$5\times5$ & 18 & 270\,270 & 0\\
$7\times4$ & 19 & 4\,729\,725 & 18\\
$7\times4$ & 20 & 2\,027\,025 & 0\\
\bottomrule
\end{tabular}
\end{center}
Positive counts supply explicit witnesses. The zero counts exclude every
larger total up to the universal cell bound, proving the four equalities.
The corresponding exact weak values are reproduced by the separate
weak-admissibility searches in the same archive.
\end{proof}

\begin{corollary}\label{cor:z2-64}
One has
\[
z_2(6,4)=z_{SL}(6,4)=z_{RL}(6,4)=16.
\]
\end{corollary}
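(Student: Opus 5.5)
The plan follows the pattern of Corollary~\ref{cor:z2-54}. The lower bound is already available. Proposition~\ref{prop:finite-values} gives $z_{RL}(6,4)=16$, and the hierarchy $z_2\ge z_{SL}\ge z_{RL}$ (Proposition~\ref{prop:zsl-hierarchy}, stated later) gives $z_2(6,4)\ge z_{SL}(6,4)\ge 16$. So the whole task is the unrestricted upper bound $z_2(6,4)\le 16$.

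For the upper bound, take an irreducible doubly simple $6\times4$ augmentation $G$ with $|E_1|=z(6,4)=12$. The universal cell bound (Proposition~\ref{prop:universal-cell-bound}) gives total size at most $\lfloor(24+12)/2\rfloor=18$, so $|E_2|\le 6$. If $|E_2|\ge 5$, we delete displayed $2$-edge squares one at a time. By hereditary irreducibility (Lemma~\ref{lem:hereditary}) this produces an irreducible total-$17$ augmentation on the same base. It therefore suffices to exclude irreducible augmentations with $|E_2|=5$.

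Next comes the base classification. The row-degree argument of Lemma~\ref{lem:base-ABC} adapts directly. For a $C_4$-free $6\times4$ graph, $\sum_i\binom{d_i}{2}\le 6$ with $\sum d_i=12$ forces every $d_i=2$. The six row neighbourhoods are then the six edges of $K_4$, so the base is unique up to permutation: the incidence graph of $K_4$. Its twelve free cells are again indexed by the edges of $K_4$, each row $ij$ being free exactly in the two columns outside $\{i,j\}$. The symmetry group $S_4$ of this base acts on the $10{,}395$ perfect-type families of $5$ disjoint pairs, leaving few orbits. I would first discard single pairs that are locally impossible. A degenerate pair in a row $ij$ joining its two free cells $k,l$ is one source of such exclusions. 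Another is a pair whose rectangle identity, combined with row or column orthogonalities, forces an inner product that is incompatible with unit vectors; these are the one-pair exclusions of the kind listed in Appendix~B.

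The main obstacle is excluding the surviving $5$-pair families in a way that applies to every SOS decomposition, not only to those passing $(RW3^+)$. Here I would mimic the base-$B$/base-$C$ method. First, partition the surviving pair positions into forbidden classes, each class containing at most one selected pair. Membership in a class would be certified by an explicit coefficient identity: a product relation of the type $\mathbf v_p\cdot\mathbf v_q+\mathbf v_r\cdot\mathbf v_s=\delta+\delta'$, which together with hole-pressure gives a strict-inequality contradiction. Then assign weights so that the weighted count reads $c|E_2|+(\text{nonnegative terms})\le K$ with $cK^{-1}<1/5$ violated, i.e. $5c>K$, which contradicts $|E_2|=5$.

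If such a clean weighted inequality does not close, I would instead fix orbit representatives under $S_4$ and exclude each directly. Twelve free cells with ten used leave exactly two holes, and the hole positions strongly constrain which rectangle identities are grounded. For each orbit I would exhibit a lower-rank Gram factorisation, in the spirit of the $a^2+b^2+c^2+d^2=(a+d)^2+(b-c)^2$ trick from Proposition~\ref{prop:universal-cell-bound}, or else an identity forcing two selected vectors to coincide.
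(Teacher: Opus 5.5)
Your lower bound, the reduction to $|E_2|=5$ via Lemma~\ref{lem:hereditary}, and the identification of the unique base as the incidence graph of $K_4$ are all correct. The gap is the upper bound $z_2(6,4)\le16$, which you yourself call the whole task. What you give there is a plan (``I would\ldots''), with no certificates and no evidence that a weighted class cover actually closes on this base.

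The exclusion mechanism you describe also cannot work. Identities such as $\mathbf v_p\cdot\mathbf v_q+\mathbf v_r\cdot\mathbf v_s=\delta+\delta'$ hold in \emph{every} SOS representation, including the displayed one. There the selected-edge vectors are orthonormal coordinate vectors. So these identities can never produce an inner product incompatible with unit vectors. Any identification they force is already satisfied by the displayed decomposition, so it excludes nothing. They are the lower-bound (irreducibility) machinery of $(RW3)$, not an upper-bound tool.

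To exclude a configuration you must prove it \emph{reducible}, i.e.\ produce a shorter decomposition. The available tools are strip overload via $\operatorname{BSR}(q,2)\le q+1$ (Lemma~\ref{lem:strip-overload}), a product relation (Lemma~\ref{lem:product-relation}), or an explicit local identity (Lemma~\ref{lem:local-sos}). That is how every single-pair and two-pair exclusion in Appendix~B is certified. One smaller error: $10{,}395$ counts the six-pair families. There are $M(12,5)=62{,}370$ five-pair families, so more than $2{,}500$ orbits under $S_4$ before pruning.

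The missing idea, which is what the paper uses, is to apply hereditary irreducibility to whole rows and land in the already settled $5\times4$ case. Take an irreducible augmentation with $12+s$ squares, and for each row delete every displayed square touching that row. Every row of the $K_4$-incidence base carries exactly two ordinary cells. So each restriction is an irreducible $5\times4$ form with $10=z(5,4)$ ordinary squares, and Corollary~\ref{cor:z2-54} bounds it by $13$.

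Now sum over the six rows. Each ordinary square is counted five times, each pair meeting two rows four times, and each of the $\ell$ row-degenerate pairs five times. This gives $60+4(s-\ell)+5\ell\le78$, i.e.\ $4s+\ell\le18$ and $s\le4$. That proves $z_2(6,4)\le16$ with no new local certificates at all.
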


\begin{proof}
Proposition~\ref{prop:finite-values} gives $z_{RL}(6,4)=16$, hence
$z_2(6,4)\ge16$, and Proposition~\ref{prop:zsl-hierarchy} then also gives
$z_{SL}(6,4)=16$. For the reverse inequality, let an irreducible
$6\times4$ augmentation have $12+s$ displayed squares, where $12=z(6,4)$.
Delete one row at a time and keep only the displayed squares whose full
support remains in the other five rows. Equivalently, one deletes every
complete displayed square touching the chosen row. By
Lemma~\ref{lem:hereditary}, each remaining $5\times4$ restriction is
still irreducible, and is therefore an augmentation of an extremal
$5\times4$ base. Corollary~\ref{cor:z2-54} then bounds every such
restriction by $13$.

Summing over the six deleted rows, each ordinary square is counted five
times. A non-row-degenerate selected pair is counted four times, while a
row-degenerate selected pair is counted five times. Writing $\ell$ for the
number of row-degenerate selected pairs, we obtain
\[
5\cdot 12 + 4(s-\ell)+5\ell \le 6\cdot 13,
\]
so $4s+\ell\le18$. In particular $s\le4$, and therefore
\[
z_2(6,4)\le 12+4=16.
\]
Hence $z_2(6,4)=16=z_{SL}(6,4)=z_{RL}(6,4)$.
\end{proof}

\section{A Computational \texorpdfstring{$(RW3)$}{(RW3)} Benchmark for the 7-by-7 Case}
\label{sec:benchmark77}

Throughout this section, rows and columns are numbered by $\mathbb Z_7$.
Fix the extremal $C_4$-free base
\[
E_1=\{(r,r+d):r\in\mathbb Z_7,\ d\in\{0,1,3\}\},
\]
with column arithmetic modulo seven. It has $21=z(7,7)$ cells.
The benchmark uses the following specified weak-optimal augmentation:
\[
E_2^{\mathrm{wk}}=\left\{\begin{gathered}
(0,2;1,5),\ (0,6;2,1),\ (1,0;6,3),\\
(2,4;5,3),\ (3,2;4,6),\ (3,5;5,0),\\
(4,1;6,4)
\end{gathered}\right\}.
\]

An improved recursive augmentation on the same base is
\[
E_2^{32}=\left\{\begin{gathered}
(0,2;6,4),\ (0,4;4,6),\ (0,5;1,3),\\
(1,0;3,2),\ (1,5;2,6),\ (1,6;5,0),\\
(2,0;3,5),\ (2,1;6,3),\ (2,4;5,3),\\
(3,1;4,3),\ (4,1;6,5)
\end{gathered}\right\}.
\]

\begin{theorem}\label{thm:rw3-77}
For the base and augmentations above:
\begin{enumerate}
    \item $E_1\cup E_2^{\mathrm{wk}}$ is weak admissible and satisfies
    $(RW3)$. Its total size is $28=z_{wL}(7,7)$.
    \item Of the $91$ disjoint extra $2$-edges that can be added to this
    particular weak augmentation, exactly seven pass the filter consisting
    of $(S)$, exclusion of $(W2)$ and $(W2')$, and $(RW3)$. They are
\[
\mathcal E_{\mathrm{extra}}=\left\{\begin{gathered}
(0,4;1,6),\ (0,4;2,0),\ (1,6;6,5),\\
(2,0;5,2),\ (3,1;4,3),\ (3,1;5,2),\\
(4,3;6,5)
\end{gathered}\right\}.
\]

    Each therefore gives a certified total-$29$ augmentation.
    \item $E_1\cup E_2^{32}$ passes the same filter and has $21+11=32$
    selected edges. Consequently,
    \[
    z_2(7,7)\ge z_{RL}(7,7)\ge32>28=z_{wL}(7,7).
    \]
\end{enumerate}
\end{theorem}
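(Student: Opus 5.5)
The theorem has three parts, and the plan verifies each one as a finite certificate on the fixed circulant base. First I will check the base itself. Its column supports are $\{c-d: d\in\{0,1,3\}\}$, and $\{0,1,3\}$ is a perfect difference set modulo $7$. Hence any two distinct columns share exactly one row, so $G_1$ is $C_4$-free and $|E_1|=21=z(7,7)$. For every augmentation I will also check $(S)$ directly: the $28$ free cells are $(r,r+2),(r,r+4),(r,r+5),(r,r+6)$, and I will confirm that the halves of the listed $2$-edges are pairwise distinct cells in this set. This gives $14$ of the $28$ for $E_2^{\mathrm{wk}}$ and $22$ for $E_2^{32}$.

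For part~1, I will verify weak admissibility condition by condition. The seven $2$-edges are all nondegenerate, so the work is computing opposite cells. For $(W2')$, I will check that for each $2$-edge the two opposite cells are not both in $E_1$. For $(W2)$, I will build the dependency digraph on seven vertices by listing which opposite cells are halves of which $2$-edges, and exhibit a topological order. For $(W3)$, I will enumerate the vertex-disjoint pairs involving a $2$-edge and find an empty cell in each $O(e,f)$. Once weak admissibility holds, Proposition~\ref{prop:weak-implies-rw3plus} gives $(RW3^+)$ immediately. For plain $(RW3)$ I will note that the configuration contains no complementary selected pair, so the complementary-pair rule is never invoked and the reverse-topological resolution in that proof already works in the base closure. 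The value $z_{wL}(7,7)=28$ is quoted from \cite{qi3}.

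Part~2 is an exhaustive finite check. The $14$ cells left free by $E_2^{\mathrm{wk}}$ give $\binom{14}{2}=91$ candidate extra pairs. For each candidate I will apply $(S)$, which holds automatically, then $(W2)$ and $(W2')$, and then run the least-fixed-point closure for $(RW3)$ to a fixpoint. The closure is monotone, so iterating the line, saturation and rectangle rules until nothing changes computes it. I will then test the three admissibility conditions and record that exactly the seven listed edges survive. Soundness of $(RW3)$ then makes each survivor an irreducible total-$29$ form. I will cite the archived verifier \cite{reproducibility} for the rejection records rather than reproduce them.

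Part~3 is the substantive step, and I expect it to be the main obstacle. I will run the same closure on $E_1\cup E_2^{32}$. I will first resolve the $2$-edges whose rectangle contains an empty cell (only $6$ holes remain), then propagate through chains as in Proposition~\ref{prop:zrl-m3-chain}. Finally I will certify all $\binom{32}{2}=496$ orthogonality obligations, most by the line rule and the rest by rectangle transfers ordered as in the $5\times4$ table. The difficulty is finding a well-founded order when the dependencies are dense. If a by-hand ordering stalls, I will rely on the mechanical fixpoint computation confirmed by both independent checkers. Passing $(RW3)$ implies passing $(RW3^+)$, so Proposition~\ref{prop:zrl-lower} yields $z_2(7,7)\ge z_{RL}(7,7)\ge32>28$.
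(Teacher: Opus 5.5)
The one genuine gap is the upper bound $z_{wL}(7,7)\le 28$. Your plan proves only $z_{wL}(7,7)\ge 28$, via the weak admissible family, and takes the equality $28=z_{wL}(7,7)$ from \cite{qi3}. The paper attributes no such value to \cite{qi3}; it proves the equality inside this very argument. The strict inequality $32>28=z_{wL}(7,7)$ in part~3 depends on it.

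Two ingredients are missing.
\begin{enumerate}
\item $z_{wL}(7,7)$ is a maximum over all extremal bases, so you need every $21$-edge $C_4$-free $7\times 7$ graph to be isomorphic to the displayed circulant. Your difference-set remark shows only that this base is extremal. Uniqueness is easy: equality in $\sum_r\binom{d_r}{2}\le\binom{7}{2}$ forces all $d_r=3$ and every column pair to be covered exactly once, which is the Fano plane.
\item You need an exhaustive search over weak augmentations of that base. The paper makes it feasible in three steps. It restricts to the $168$ individually weak-admissible candidate $2$-edges. It uses the $168$ base automorphisms, which act transitively on these candidates, to assume the family contains $((0,2),(1,5))$. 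It uses heredity of weak admissibility under deleting $2$-edges, so finding no accepted family of size eight rules out all larger ones. The counts by size are $1,98,2305,12715,10920,681,2,0$.
\end{enumerate}

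The rest of your plan is sound. Your derivation of $(RW3)$ for $E_2^{\mathrm{wk}}$ says more than the paper, which only reports that both checkers accept it. The complementary diagonal of each of its seven nondegenerate $2$-edges contains a cell of $E_1$, so there is no complementary selected pair. The proof of Proposition~\ref{prop:weak-implies-rw3plus} therefore uses only line, saturation and rectangle-transfer steps.

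Parts~2 and~3 coincide with the paper's archived exhaustive checks. One omission: for $E_2^{32}$ the ``same filter'' also requires checking $(W2)$ and $(W2')$, and your part~3 does not. Both do hold. Every selected $2$-edge there has an opposite cell outside $E_1$. The dependency digraph consists only of the acyclic chains
\[
(0,5;1,3)\to(1,5;2,6)\to(1,6;5,0)\to(1,0;3,2),
\]
\[
(0,2;6,4)\to(0,4;4,6),\qquad (3,1;4,3)\to(4,1;6,5).
\]
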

\begin{proof}
The finite classification in \cite{reproducibility} verifies that every
extremal $7\times7$ base is row-column isomorphic to the displayed base.
There are $168$ individually weak-admissible candidate $2$-edges on its
free cells. The $168$ archived automorphisms act transitively on these
candidates, so every nonempty weak augmentation can be normalized to
contain $((0,2),(1,5))$. Exhaustive enumeration with this pair fixed gives
the following numbers of accepted partial families of sizes $1,\ldots,8$:
\[
1,\quad98,\quad2305,\quad12715,\quad10920,\quad681,\quad2,\quad0.
\]
Two independent implementations give the same counts.
Weak admissibility is preserved by deleting selected $2$-edges, so the
absence of a size-eight family excludes every larger weak augmentation.
The displayed size-seven family therefore proves $z_{wL}(7,7)=28$.
Both exact line-and-rectangle checkers also accept this family under
$(RW3)$.

The weak augmentation leaves $14$ cells unused, giving
$\binom{14}{2}=91$ possible extra pairs. Testing all of them against the
stated filter gives exactly the seven displayed extensions. The complete
decision record is archived in \cite{reproducibility}.

For the total-$32$ family, both independent $(RW3)$ implementations resolve
all eleven selected $2$-edges and certify all $\binom{32}{2}=496$ pairwise
orthogonality obligations. Simplicity and the two dependency restrictions
are checked separately. The witness and its verification record are
archived in \cite{reproducibility}.
Soundness of $(RW3)$ gives the lower bound of $32$. This is a lower bound
on $z_{RL}(7,7)$; optimality of $32$ is not asserted.
\end{proof}

\section{The Complete-Graph Incidence Family: A Cubic Separation}\label{sec:incidence}

We now show that the gap between $z_2$ and $z_{wL}$ also occurs in an infinite family and grows cubically.

\subsection{The Incidence Graph}

Let the columns be the vertices of $K_N$ and the rows its $\binom{N}{2}$ edges.
The one-edge set is the incidence graph:
\[
E_1 = \{(ab,a),(ab,b): a < b\}.
\]
Thus
\[
m = \binom{N}{2}, \qquad n = N, \qquad |E_1| = N(N-1).
\]

\begin{theorem}\label{thm:incidence-unique}
For $m = \binom{N}{2}$, $n = N$,
\[
z\left(\binom{N}{2}, N\right) = N(N-1),
\]
and every extremal $C_4$-free graph is the incidence graph of $K_N$ up to row
and column relabeling.
\end{theorem}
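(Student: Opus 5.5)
We prove the equality and the uniqueness together by the standard double count of column pairs, working in the transposed picture: each of the $m=\binom N2$ rows is a subset of the $N$ columns. Let $G_1$ be any $C_4$-free $m\times N$ bipartite graph with row degrees $d_1,\dots,d_m$. Being $C_4$-free means that any two columns have at most one common row neighbour. Counting pairs (row, pair of its columns) therefore gives
\[
\sum_{i=1}^m \binom{d_i}{2}\le \binom{N}{2}=m .
\]
As in the proof of Lemma~\ref{lem:base-ABC}, we use the integer inequality $\binom{d}{2}\ge 2d-3$, or better the exact identity $\binom{d}{2}=(d-1)+\binom{d-1}{2}$. This gives
\[
\sum_i (d_i-1)\le \sum_i\binom{d_i}{2}\le m ,
\]
and hence $|E_1|=\sum_i d_i\le 2m=N(N-1)$. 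The incidence graph of $K_N$ attains this value. It is $C_4$-free, since two distinct vertices $a,b$ of $K_N$ lie together on exactly one edge, namely $ab$. This proves $z(\binom N2,N)=N(N-1)$.

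For uniqueness we examine the equality case. Equality in $\sum_i(d_i-1)\le\sum_i\binom{d_i}{2}$ requires $\binom{d_i-1}{2}=0$ for every $i$, so every $d_i\le 2$. Combined with $\sum_i d_i=2m$, this forces $d_i=2$ for all $i$. Equality in the pair count then says that every pair of columns is covered by exactly one row. So the rows are $2$-subsets of the columns that are pairwise distinct, since $C_4$-freeness forbids two rows sharing the same pair. There are $\binom N2$ of them, so they are exactly all edges of $K_N$. After relabeling, rows correspond to edges $ab$ and row $ab$ is adjacent to columns $a$ and $b$, which is the incidence graph.

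We expect no real obstacle here. The only point needing care is the equality analysis: we must use the sharpened form $\binom{d}{2}-(d-1)=\binom{d-1}{2}$ rather than the cruder $2d-3$ bound, since the latter is not tight for $d=2$ versus $d=3$ in the way needed. With the sharpened form, rows of degree $0$, $1$ or $\ge3$ are excluded cleanly. Degree $0$ or $1$ rows are fine for the inequality but are ruled out by the total $\sum_i d_i=2m$ together with $d_i\le2$. An alternative would be to cite \v{C}ul\'{\i}k's formula, but that applies to $m\ge\binom n2$ in the other orientation and does not give uniqueness directly, so the self-contained count is preferable.
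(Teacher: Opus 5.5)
Your proof is correct and is essentially the paper's argument: the same pair count $\sum_i\binom{d_i}{2}\le\binom{N}{2}=m$ combined with $\binom{d}{2}\ge d-1$, and the same equality analysis forcing every row to have degree $2$ and every column pair to be covered exactly once. You close the degree step with $\sum_i d_i=2m$ where the paper uses $\sum_r\binom{d_r}{2}=m$, and you also check explicitly that the incidence graph is $C_4$-free and attains $N(N-1)$, which the paper leaves implicit. One small correction to your closing aside: the cruder inequality $\binom{d}{2}\ge 2d-3$ would also have sufficed. It likewise gives $|E_1|\le 2m$, and its equality cases $d_i\in\{2,3\}$ are forced down to $d_i=2$ by $\sum_i d_i=2m$.
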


\begin{proof}
{
If the row degrees are $d_r$, then $C_4$-freeness implies
\[
\sum_r \binom{d_r}{2} \le \binom{N}{2} = m.
\]
Since $\binom{d}{2} \ge d-1$ for every $d\ge 0$,
\[
|E_1|-m=\sum_r (d_r-1)\le \sum_r \binom{d_r}{2}\le m,
\]
so $|E_1| \le 2m = N(N-1)$. If equality holds, then equality holds throughout the displayed chain. Hence every row degree lies in $\{1,2\}$, and in particular no row has degree zero. Since there are exactly $m$ rows and
\[
\sum_r \binom{d_r}{2}=m,
\]
every row must in fact have degree $2$. Equality in the pair count then says that every pair of columns occurs in exactly one row, which is precisely the incidence graph of $K_N$ up to row and column relabeling.
}
\end{proof}

\subsection{A Weak Upper Bound for the Incidence Family}

Let $F = \frac{N(N-1)(N-2)}{2}$ be the number of nonincidence cells, and let
$s = |E_2|$. Let $Z = F - 2s$ be the number of unused nonincidence cells.

Every selected 2-edge in the incidence graph has at least
\[
A_N = (N-1)(N-2) - 4
\]
vertex-disjoint incidence 1-edges. If a hole is at $(r,c)$, let $r_r$ be the
number of selected 2-edges using row $r$ and $c_c$ the number using column $c$.
That hole can cover at most
\[
(N-1)r_r + 2c_c
\]
of the counted $(W3)$ pairs. If $z_r$ and $z_c$ denote the number of holes in
row $r$ and column $c$, simplicity gives
\[
r_r \le N-2-z_r, \qquad c_c \le \binom{N-1}{2} - z_c.
\]
{
\[
\begin{aligned}
sA_N
&\le (N-1)\sum_r z_r r_r + 2\sum_c z_c c_c \\
&\le (N-1)\sum_r z_r(N-2-z_r) + 2\sum_c z_c\left(\binom{N-1}{2}-z_c\right) \\
&= 2(N-1)(N-2)Z - (N-1)\sum_r z_r^2 - 2\sum_c z_c^2.
\end{aligned}
\]
Now Cauchy--Schwarz over the $\binom{N}{2}$ rows and the $N$ columns gives
\[
\sum_r z_r^2 \ge \frac{Z^2}{\binom{N}{2}},\qquad \sum_c z_c^2 \ge \frac{Z^2}{N},
\]
so
\[
sA_N \le 2(N-1)(N-2)Z - \frac{4Z^2}{N}.
\]
Since $s = (F-Z)/2$, this factors to give
\[
\frac{Z}{F} \ge \frac{1}{4} - \frac{1}{(N-1)(N-2)}.
\]
Equivalently,
\[
s \le \frac{3}{8}F + \frac{N}{4}.
\]
Degenerate 2-edges cause no difficulty here: the row and column exclusions above are upper estimates and only become more conservative for degenerate supports.
}

\begin{corollary}\label{cor:incidence-weak-upper}
\[
z_{wL}\left(\binom{N}{2}, N\right) \le N(N-1) + \frac{3N(N-1)(N-2)}{16} + \frac{N}{4}.
\]
In particular,
\[
z_{wL}\left(\binom{N}{2}, N\right) \le \left(\frac{3}{16} + o(1)\right)N^3.
\]
\end{corollary}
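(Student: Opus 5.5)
The plan is to combine the incidence classification of Theorem~\ref{thm:incidence-unique} with the counting bound $s\le \frac38F+\frac N4$ derived just above.

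Let $G$ be any weak admissible limited augmented bipartite graph on the $\binom N2\times N$ grid. By definition $|E_1|=z(\binom N2,N)$. Theorem~\ref{thm:incidence-unique} gives $z=N(N-1)$, and it also shows that, after relabeling rows and columns, $E_1$ is exactly the incidence graph of $K_N$. Relabeling preserves weak admissibility and the edge count, so we may assume $E_1$ is the incidence set. Then every selected $2$-edge lives on the $F=\frac{N(N-1)(N-2)}{2}$ nonincidence cells, and the preceding estimate applies with $s=|E_2|$. Substituting gives
\[
|E_1|+|E_2|\le N(N-1)+\frac38\cdot\frac{N(N-1)(N-2)}{2}+\frac N4 .
\]
Maximizing over all such $G$ yields the first inequality. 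The asymptotic form follows because $N(N-1)+N/4=O(N^2)$ and $\frac{3}{16}N(N-1)(N-2)=\frac3{16}N^3+O(N^2)$.

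The main obstacle is making sure the counting bound is justified for every weak admissible augmentation, not just for typical ones. I would recheck three points.

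First, I would confirm the claim that each selected $2$-edge has at least $A_N=(N-1)(N-2)-4$ vertex-disjoint incidence $1$-edges. The $2$-edge touches at most two rows and two columns. Removing two columns removes at most $2(N-2)+1$ incidence edges, and the two rows remove at most $4$ more. A careful count then gives $N(N-1)-|\text{touched}|\ge A_N$, with the degenerate cases only improving the bound.

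Second, I would check that $(W3)$ forces each such vertex-disjoint pair to have a hole in its opposite-cell set. Charging each pair to one such hole, a hole at $(r,c)$ collects at most $(N-1)r_r+2c_c$ pairs. The reason is that a pair charged through row $r$ has its $1$-edge in column $c$, and column $c$ contains $N-1$ incidence cells; a pair charged through column $c$ has its $1$-edge in row $r$, which contains $2$ incidence cells.

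Third, I would verify the simplicity bounds on $r_r$ and $c_c$. After that, the Cauchy--Schwarz step and the algebra converting $Z/F\ge\frac14-\frac1{(N-1)(N-2)}$ into $s\le\frac38F+\frac N4$ are routine. They use $F-Z=2s$ and $\frac{F}{4}\cdot\frac{1}{(N-1)(N-2)}=\frac N8$.
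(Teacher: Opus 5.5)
Your proposal is correct and follows the paper's argument exactly. You normalize $E_1$ to the incidence graph of $K_N$ via Theorem~\ref{thm:incidence-unique} and add $|E_1|=N(N-1)$ to the hole-charging bound $s\le\frac38F+\frac N4$, and that bound's derivation (charging, simplicity bounds, Cauchy--Schwarz, and the factorization $2(N-1)(N-2)Z-\frac{4Z^2}{N}=\frac{4Z}{N}(F-Z)$) is sound.

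There is one arithmetic slip in your recheck. Two columns contain $2(N-1)$ incidence cells, not $2(N-2)+1$; the latter counts the rows meeting them. The $2(N-1)+4$ excluded cells give exactly $A_N$, and this bound is attained when both opposite cells of a nondegenerate $2$-edge are nonincidence.
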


\subsection{The Nested Perfect-One-Factorization Construction}

Let $N = 2p$ with $p$ an odd prime. Choose an outer perfect one-factorization
$\mathcal{F}$ of $K_{2p}$, whose existence is classical \cite{kobayashi1989}.
Inside each outer factor $F$ of size $p$, label its row-edges by $\mathbb{F}_p$.
For an edge $g \in F$ with label $a$, pair the remaining row labels by the
involution
\[
a+x \longleftrightarrow a-x, \qquad x = 1,\ldots,(p-1)/2,
\]
and on every resulting pair of rows use the two endpoints of $g$ as columns
and select both complementary diagonals.

Every nonincidence cell is used exactly once. Hence
\[
|E_2| = p(2p-1)(p-1), \qquad |E_1| + |E_2| = p(2p-1)(p+1).
\]

\begin{theorem}\label{thm:nested-p1f}
For every odd prime $p$,
\[
z_2\left(\binom{2p}{2}, 2p\right) \ge p(2p-1)(p+1).
\]
Moreover, Proposition~\ref{prop:universal-cell-bound} gives equality:
\[
z_2\left(\binom{2p}{2}, 2p\right) = p(2p-1)(p+1).
\]
\end{theorem}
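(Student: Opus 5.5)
The plan has two halves. For the upper bound, I evaluate Proposition~\ref{prop:universal-cell-bound} at $m=\binom{2p}{2}$, $n=2p$, using $z(m,n)=2p(2p-1)$ from Theorem~\ref{thm:incidence-unique}. This gives
\[
\frac{mn+z}{2}=\frac{N^2(N-1)/2+N(N-1)}{2}=\frac{N(N-1)(N+2)}{4}=p(2p-1)(p+1),
\]
so the displayed construction, which fills every cell, meets the cell bound exactly. The whole content is therefore the lower bound, namely that the nested perfect-one-factorization form is irreducible. For that I will verify the three conditions of $(RW3^+)$ and then invoke the soundness proposition for $(RW3^+)$ together with Proposition~\ref{prop:zrl-lower}.

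First I check the combinatorics of the construction. Rows in one outer factor $F$ are disjoint edges of $K_{2p}$. Hence for $g=cd\in F$ and a paired couple of rows $g',g''\in F\setminus\{g\}$, the columns $c,d$ are nonincidence cells in both rows. The involution $a\pm x$ on $\mathbb F_p\setminus\{a\}$ covers each other label exactly once, so each nonincidence cell is used exactly once and $(S)$ holds. Every selected $2$-edge is one diagonal of a rectangle whose other diagonal is also selected. Thus the complementary-pair rule identifies the halves of all $p(2p-1)(p-1)$ selected $2$-edges at once, and no rectangle transfer is needed for resolution. Every $\sim$-identification joins only the two halves of one selected edge, so distinct selected edges keep distinct classes, exactly as in the proof of Proposition~\ref{prop:weak-implies-rw3plus}.

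The real work is the orthogonality condition. Because there are no holes, no rectangle transfer can start from an empty cell. Every certificate must start from the line rule, which orthogonalizes all distinct selected edges meeting a common row or column, and then propagate through rectangles whose companion diagonal is already certified. I would organize the unknown inner products by type: (i) $1$-edge vs.\ $1$-edge, (ii) $1$-edge vs.\ $2$-edge, and (iii) $2$-edge vs.\ $2$-edge. Within each type I would index the unknowns by the outer factor and the $\mathbb F_p$-labels involved.

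For type (i), take two incidence cells $(ab,a)$ and $(cd,c)$ in different rows and columns. Their companion cells $(ab,c)$ and $(cd,a)$ are halves of $2$-edges, and whenever those two $2$-edges share a row or column the line rule settles the companion. Otherwise the rectangle identities chain one unknown to another, as in the $g_{k,i}$ transport of Proposition~\ref{prop:zrl-m3-chain}. In the labeled picture each such chain should move a label by a fixed nonzero step $x\in\mathbb F_p$, either $a\mapsto a+2x$ or $a\mapsto 2a-b$. Since $p$ is prime, the orbit of any nonzero step is all of $\mathbb F_p$, so the chain eventually reaches a pair sharing a line, which is grounded at zero. I would then run the same transport for types (ii) and (iii), handling pairs in the same outer factor separately from pairs in different outer factors.

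I expect the main obstacle to be the last step: the $2$-edge vs.\ $2$-edge inner products across two different outer factors. There the transport steps come from two independent labelings, and the chain could close into a cycle carrying a sign, $g=-g$ versus $g=g$. An unsigned closure would accept only the first, and the second leaves the value undetermined. If a sign obstruction appears, I would first try to show the cycle has odd length, which forces $g=-g$ and hence $g=0$. Failing that, I would fall back on a direct Gram-matrix argument: the rectangle identities form a linear system whose only solution is the zero off-diagonal Gram matrix. I would prove this by showing the system decouples over the orbits of the $\mathbb F_p$-translations, so that each block is a circulant whose eigenvalues are nonzero at primes $p$. The abstract's remark that $p=3$ needs the signed criterion suggests this sign issue is genuine and that small primes may need separate treatment.
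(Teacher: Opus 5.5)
Your upper bound and the opening of the lower bound are correct and match the paper. Indeed $(mn+z)/2=p(2p-1)(p+1)$, the construction fills every cell and satisfies $(S)$, and every selected $2$-edge is resolved by the complementary-pair rule (Appendix~A.1). The gap is that the orthogonality of all $p(2p-1)(p+1)$ resolved vectors, which is the entire content of the theorem, is left as a heuristic.

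First, your split into types (i)--(iii) does not decouple. Take incidence cells $(ab,a)$ and $(cd,c)$ with $\{a,b\}\cap\{c,d\}=\emptyset$ and rows in different outer factors $F\neq G$. Their rectangle sends the inner product to one between $2$-edge halves anchored in $F$ and in $G$. So type (i) already rests on the cross-factor type (iii) problem you leave open.

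Second, the ``fixed nonzero step'' transport is roughly right only inside one factor, and even there it needs the paper's bookkeeping:
\begin{itemize}
\item index the $p+1$ vectors of anchor $a$ uniformly as $q_{a,t}$, $t\in\widehat{\mathbb F}_p$, with the incidence cells being $t=a,\infty$;
\item use that the inner pairings $P_a$ form a perfect one-factorization of $K_{p+1}$ (one place primality enters);
\item shorten arcs of the alternating Hamilton cycle $P_a\cup P_b$ two transfers at a time, treating $\infty$ separately.
\end{itemize}

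Third, across factors you have no mechanism. The paper's key input there is one you never use: perfectness of the \emph{outer} factorization. It makes $F\cup G$ a Hamilton cycle of $K_{2p}$, which gives the cyclic block indexing $X_{ij}(t,u)$ and the four-move transfer equation (A.1). Explicit endpoint-valid words then carry diagonal blocks into the grounded same-column families (A) and (B), using $3(\alpha_i-\alpha_{i-1})\neq0$ and $3(\beta_i-\beta_{i-1})\neq0$, which hold for $p\ge5$. Off-diagonal blocks are shifted $d\mapsto d-1$ until they reach diagonal ones.

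Neither of your fallbacks closes this.
\begin{itemize}
\item \emph{Circulant decomposition.} It needs an $\mathbb F_p$-translation symmetry of the cross-factor system. But the labels $\alpha_i,\beta_j$ are in general arbitrary permutations of $\mathbb F_p$ relative to the Hamilton-cycle positions $i,j$, so no such symmetry is available. The normalization $\alpha_i=\beta_i=i$ is possible only at $p=3$, where every permutation of $\mathbb F_3$ is affine. Nonvanishing of the putative eigenvalues is also unproved.
\item \emph{The case $p=3$.} Your primary route, verifying $(RW3^+)$ and invoking its soundness, cannot work for this construction. Ordinary grounded propagation leaves seven fibres of $X_{00}$ ungrounded. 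They vanish only because explicit odd-length closed transfer words return each one to itself with a sign change, giving $X=-X$. That step must be made directly on coefficient vectors (or through $(RW3^\pm)$), and the odd words must actually be exhibited.
\end{itemize}

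You correctly foresaw the sign phenomenon, but as written the proposal proves the equality only conditionally on the orthogonality statement that Appendix~A establishes.
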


\begin{proof}
The complete coefficient-vector proof is given in Appendix~A. The lower bound there shows that every SOS representation has at least $p(2p-1)(p+1)$ squares, while the displayed construction has exactly that many. Equality then follows from Proposition~\ref{prop:universal-cell-bound}, because here
\[
m=\binom{2p}{2}=p(2p-1),\qquad n=2p,\qquad z(m,n)=2p(2p-1),
\]
so
\[
\left\lfloor \frac{mn+z(m,n)}{2}\right\rfloor
=\left\lfloor \frac{2p^2(2p-1)+2p(2p-1)}{2}\right\rfloor
=p(2p-1)(p+1).
\]
\end{proof}

\begin{corollary}\label{cor:nested-rw3plus}
For every odd prime $p\ge 5$,
\[
z_{RL}\left(\binom{2p}{2},2p\right)=z_2\left(\binom{2p}{2},2p\right)=p(2p-1)(p+1).
\]
\end{corollary}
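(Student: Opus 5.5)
Since $z_2\ge z_{RL}$ by Proposition~\ref{prop:zrl-lower}, and Theorem~\ref{thm:nested-p1f} already gives $z_2\bigl(\binom{2p}{2},2p\bigr)=p(2p-1)(p+1)$, it suffices to show that the nested perfect-one-factorization construction itself satisfies $(S)$, has $C_4$-free $G_1$, and satisfies $(RW3^+)$ when $p\ge5$. The first two properties are immediate. Simplicity holds because every nonincidence cell is used exactly once, and the incidence graph of $K_{2p}$ is $C_4$-free because two columns share at most one row. So the whole content is a closure certificate. The resulting lower bound $z_{RL}\ge p(2p-1)(p+1)$ then meets the universal cell bound of Proposition~\ref{prop:universal-cell-bound}, which is what Theorem~\ref{thm:nested-p1f} uses, and equality holds throughout.

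\textbf{Resolving the $2$-edges.} Every selected $2$-edge belongs to a complementary pair: on rows $a+x,a-x$ with columns the endpoints $u,v$ of $g$, both diagonals are selected. The complementary-pair rule of $(RW3^+)$ therefore identifies the halves of every selected $2$-edge at once, with no recursion. No other identifications arise, since $\delta=1$ only on selected diagonals, and simplicity keeps supports disjoint. Hence distinct selected edges lie in distinct $\sim$-classes, and conditions (1) and (2) of $(RW3^+)$ hold.

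\textbf{Orthogonality.} For condition (3) I would follow the coefficient-vector argument of Appendix~A line by line. That argument proves the lower bound on the SOS rank by deriving pairwise orthogonality of the $p(2p-1)(p+1)$ edge vectors. The plan is to check that each inner-product deduction there is either:
\begin{itemize}
\item a same-line coefficient comparison (line rule),
\item a single rectangle identity whose companion diagonal is already certified (rectangle transfer, with saturation through resolved halves), or
\item a complementary-pair deduction.
\end{itemize}
Deductions of the first two kinds translate verbatim into $(RW3^+)$ certificates, and the third is covered by the complementary-pair rule. I would organize the remaining pairs by type:
\begin{itemize}
\item two $1$-edges, whose companion diagonal lies in nonincidence cells and so belongs to two selected $2$-edges or to one complementary pair;
\item a $1$-edge and a $2$-edge;
\item two $2$-edges, from the same outer factor or from different outer factors.
\end{itemize}
For each type I would exhibit a rectangle whose companion diagonal reduces to an already certified pair. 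The underlying labels are $\mathbb F_p$ arithmetic together with the structure of the one-factorization.

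\textbf{Main obstacle.} The hard step is ruling out any use in Appendix~A of a linear-algebraic argument that is not a single rectangle transfer: solving a coupled linear system, a sign or parity argument around a cycle of rectangles, or a count that needs both signs. Those are exactly what $(RW3^\pm)$ and $z_{SL}$ are meant to capture, and the separate treatment of $p=3$ via $z_{SL}$ suggests that at $p=3$ such a signed cycle is unavoidable. For $p\ge5$ I would show that each orthogonality obligation has a rectangle whose companion is grounded. This can come from a same-line pair, or from a companion diagonal whose cells lie in one outer factor and are paired by the involution $a\pm x$. The idea is to use a shift $x'\ne \pm x$, which exists once $(p-1)/2\ge2$, to reach a companion that is already settled, and to induct on the distance $|x|$ in the labeling. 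If a particular type resists this, I would fall back on an exact finite verification for $p=5$ and seek a uniform transport argument along the $\mathbb F_p$-labels for general $p$.
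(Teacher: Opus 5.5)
Your proposal is essentially the paper's proof. Every selected $2$-edge is resolved by the complementary-pair rule. For $p\ge5$, Appendix~A then derives every remaining orthogonality through finite chains of ordinary rectangle transfers ending at grounded same-line zeros: the arc-shortening in A.2 and the endpoint words in A.4. It treats $1$-edges uniformly as the vectors $q_{a,a},q_{a,\infty}$, so your type-by-type split is not needed, and the chains read backwards form an $(RW3^+)$ certificate. Your fallback is also unnecessary: signed odd cycles are needed only at $p=3$ (A.5), and the hypothesis $p\ge5$ enters only through endpoint-validity conditions such as $3(\alpha_i-\alpha_{i-1})\ne0$.
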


\begin{proof}
Appendix~A resolves every selected complementary pair by the complementary-pair rule and then proves all remaining orthogonality relations for $p\ge 5$ by finite chains of ordinary rectangle transfers terminating at grounded same-row or same-column zeros. Reading those chains backwards gives a valid $(RW3^+)$ certificate for the same construction. Thus the construction is counted by $z_{RL}$, while Theorem~\ref{thm:nested-p1f} gives the matching $z_2$ value.
\end{proof}

\subsection{The Cubic Separation}

Combining Corollary~\ref{cor:incidence-weak-upper} with Theorem~\ref{thm:nested-p1f}:

\begin{theorem}\label{thm:cubic-separation}
Along $N = 2p$ with $p$ an odd prime,
\[
z_2\left(\binom{N}{2}, N\right) = \frac{N(N-1)(N+2)}{4} = \left(\frac{1}{4} + o(1)\right)N^3,
\]
and
\[
{
z_2\left(\binom{N}{2}, N\right) - z_{wL}\left(\binom{N}{2}, N\right)
\ge \frac{N(N-1)(N-2)}{16} - \frac{N}{4}.
\!}
\]
In particular,
\[
z_2\left(\binom{N}{2}, N\right) - z_{wL}\left(\binom{N}{2}, N\right)
\ge \left(\frac{1}{16} - o(1)\right)N^3.
\]
\end{theorem}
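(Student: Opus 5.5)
The first identity comes straight from Theorem~\ref{thm:nested-p1f} after the substitution $N=2p$. That theorem gives $z_2\bigl(\binom{2p}{2},2p\bigr)=p(2p-1)(p+1)$. Since $p=N/2$, we have $p(2p-1)(p+1)=\frac{N}{2}(N-1)\frac{N+2}{2}=\frac{N(N-1)(N+2)}{4}$. This is $\bigl(\tfrac14+o(1)\bigr)N^3$, because $(N-1)(N+2)/N^2\to1$.

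For the gap, I subtract the weak upper bound of Corollary~\ref{cor:incidence-weak-upper} from this exact value:
\[
z_2-z_{wL}\ \ge\ \frac{N(N-1)(N+2)}{4}-N(N-1)-\frac{3N(N-1)(N-2)}{16}-\frac N4 .
\]
The first two terms combine to $\frac{N(N-1)(N+2-4)}{4}=\frac{4N(N-1)(N-2)}{16}$. Removing the $\frac{3}{16}$ term then leaves $\frac{N(N-1)(N-2)}{16}-\frac N4$, which is the stated bound. Dividing by $N^3$ gives $\tfrac1{16}-O(1/N)$, and that is the asymptotic form.

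This step relies on Corollary~\ref{cor:incidence-weak-upper} being valid for every $N$, in particular for $N=2p$. That is the only point needing care. I would recheck the corollary's derivation from the displayed inequality $s\le\frac38F+\frac N4$. Here $F=N(N-1)(N-2)/2$, so $\frac38F=\frac{3N(N-1)(N-2)}{16}$. Also $z_{wL}=|E_1|+s$ with $|E_1|=N(N-1)$, which holds by Theorem~\ref{thm:incidence-unique}: any limited graph has the incidence graph as its base.

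I do not expect a genuine obstacle here, since the argument is bookkeeping on results already established. The one delicate ingredient is that the weak upper bound counts $(W3)$ pairs per hole correctly, but I take that as given from the preceding subsection.
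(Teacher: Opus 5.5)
Your proposal is correct and matches the paper's proof. You substitute $N=2p$ into the exact value from Theorem~\ref{thm:nested-p1f}, then subtract the weak upper bound of Corollary~\ref{cor:incidence-weak-upper}. The arithmetic is the same: $\frac{N(N-1)(N+2)}{4}-N(N-1)=\frac{N(N-1)(N-2)}{4}$, which leaves $\frac{N(N-1)(N-2)}{16}-\frac N4$. Your side check, that the corollary follows from $s\le\frac38F+\frac N4$ with $|E_1|=N(N-1)$ forced by Theorem~\ref{thm:incidence-unique}, agrees with the paper's derivation.
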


\begin{proof}
By Theorem~\ref{thm:nested-p1f}, for $N=2p$,
\[
z_2\left(\binom{N}{2}, N\right) = p(2p-1)(p+1) = \frac{N(N-1)(N+2)}{4}.
\]
Subtracting the weak upper bound from Corollary~\ref{cor:incidence-weak-upper}:
\[
\begin{aligned}
\frac{N(N-1)(N+2)}{4}
&- \left(N(N-1) + \frac{3N(N-1)(N-2)}{16} + \frac{N}{4}\right) \\
&= \frac{N(N-1)(N-2)}{16} - \frac{N}{4} \\
&= \frac{1}{16}N^3 + O(N^2).
\end{aligned}
\]
This is the stated finite inequality, whose leading term is $\frac{1}{16}N^3$.
\end{proof}

\section{A Signed Criterion and the Exceptional Incidence Case \texorpdfstring{$p=3$}{p=3}}

The incidence-family results above show that ordinary \((RW3^+)\) propagation already certifies the nested construction for every odd prime \(p\ge5\). The remaining exceptional case is \(p=3\): there the \(15\times6\) witness is irreducible, but ordinary grounded propagation leaves odd-cycle components unresolved. The following signed criterion extends \((RW3^+)\) precisely to capture that obstruction.

\begin{definition}[Signed recursive criterion \((RW3^\pm)\)]
Assume the simplicity condition \((S)\). Maintain an equivalence relation \(\sim\) on the occupied cells, starting from equality and enlarging it whenever a selected \(2\)-edge is resolved. At any stage collapse the current \(\sim\)-classes. For two distinct classes \(A,B\), define
\[
d_{A,B}=\langle \mathbf v_A,\mathbf v_B\rangle-\delta_{A,B},
\]
where \(\delta_{A,B}=1\) exactly when \(A\) and \(B\) are the two current classes containing the halves of one unresolved selected \(2\)-edge, and \(\delta_{A,B}=0\) otherwise.

The \emph{signed transfer graph} has vertices \(\{A,B\}\) for unordered pairs of distinct current classes. A vertex is grounded if the current coefficient rules already force \(d_{A,B}=0\). A rectangle identity \(d_{A,B}+d_{C,D}=0\) gives a sign-reversing edge between the corresponding vertices. If \(\{A,B\}\) is an unresolved selected \(2\)-edge, attach the valid inequality \(d_{A,B}\le0\).

In each connected component, all deviations are therefore \(\pm t\). Force the whole component to zero if it is grounded, if it contains an odd sign cycle, or if unresolved selected \(2\)-edge inequalities occur in both bipartition classes. Whenever the deviation of an unresolved selected \(2\)-edge is forced to zero, identify its two classes, rebuild the closure and the signed transfer graph, and continue until no new identification or orthogonality certificate is obtained.

We say that \(G\) satisfies \((RW3^\pm)\), or is \emph{signed admissible}, if at the resulting fixed point the following terminal conditions hold:
\begin{enumerate}
    \item every selected \(2\)-edge is resolved;
    \item distinct selected edges determine distinct equivalence classes;
    \item representatives of every two distinct selected edges are certified orthogonal.
\end{enumerate}
\end{definition}

\begin{theorem}[Soundness of \((RW3^\pm)\)]\label{thm:rw3pm-sound}
If \(G=(S,T,E_1\cup E_2)\) satisfies \((S)\) and is signed admissible, then the associated doubly simple biquadratic form \(P_G\) is irreducible. In particular,
\[
\operatorname{SOS}(P_G)=|E_1|+|E_2|.
\]
\end{theorem}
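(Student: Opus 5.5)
We will follow the same template as the soundness proofs for $(RW3)$ and $(RW3^+)$. Fix an arbitrary SOS representation $P_G=\sum_{t=1}^r f_t^2$ with coefficient vectors $\mathbf v_{ij}\in\mathbb R^r$. For occupied cells these are unit vectors, and for holes they vanish. We then prove, by induction on the stages of the $(RW3^\pm)$ fixed-point iteration, the following invariant: at every stage, cells in one current $\sim$-class carry equal vectors, and every certified orthogonality holds. Consequently $\mathbf v_A$ is well defined for each current class, and every vertex of the signed transfer graph that the procedure forces to zero really satisfies $d_{A,B}=0$. Once this invariant holds at the terminal fixed point, the conclusion is the same as before. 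The three terminal conditions give $|E_1|+|E_2|$ distinct nonzero mutually orthogonal vectors, so $r\ge |E_1|+|E_2|$. The displayed decomposition gives the reverse inequality.

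First we check that the edges and inequalities of the signed transfer graph are valid for the true vectors. The line and rectangle coefficient identities used in the proof for $(RW3)$ give, for a genuine rectangle with diagonals $\{p,q\},\{r,s\}$,
\[
\mathbf v_p\cdot\mathbf v_q+\mathbf v_r\cdot\mathbf v_s=\delta(\{p,q\})+\delta(\{r,s\}).
\]
Under the current identifications this reads $d_{A,B}+d_{C,D}=0$. Here we must note the following point. If a diagonal already lies inside a single class, or is a resolved selected $2$-edge, then its contribution is a known constant, and the vertex on the other diagonal is grounded. Simplicity $(S)$ ensures that $\delta$ is exactly the multiplicity of the relevant mixed monomial. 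Grounded vertices are correct by the inductive hypothesis. For an unresolved selected $2$-edge $\{A,B\}$, Cauchy--Schwarz on unit vectors gives $\langle\mathbf v_A,\mathbf v_B\rangle\le1$, that is, $d_{A,B}\le 0$. If equality holds, then $\mathbf v_A=\mathbf v_B$.

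Next comes the key step. In a connected component, each sign-reversing edge relates the two deviations by $d'=-d$. Fixing a base vertex with value $t$, every vertex along a path therefore has value $\pm t$, with the sign given by path parity. We force the component to zero in three cases. If the component contains a grounded vertex, then $t=0$. If it contains an odd sign cycle, the base vertex receives both $t$ and $-t$, so $t=0$. If unresolved selected $2$-edge inequalities lie in both bipartition classes, we get $t\le0$ and $-t\le0$, so $t=0$. In each case every deviation in the component vanishes. For unresolved selected $2$-edges this yields $\langle\mathbf v_A,\mathbf v_B\rangle=1$, hence $\mathbf v_A=\mathbf v_B$, which justifies the identification. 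For ordinary pairs it yields orthogonality. Rebuilding the closure after an identification only applies the already-sound line, saturation, rectangle and complementary-pair rules to true equalities, so the invariant persists. The iteration terminates because there are finitely many cells and the relations only grow.

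We expect the main obstacle to be bookkeeping in the rebuild step rather than any analytic difficulty. After classes merge, a rectangle identity may involve diagonals whose endpoints fall in the same class, or it may relate a pair to itself, which produces a self-loop or forces $2d=0$. We will treat these degenerate cases explicitly: a self-loop with sign reversal is an odd cycle and forces zero, consistent with the rule. We must also confirm that $\delta_{A,B}$ is used consistently. A resolved selected $2$-edge has its halves in one class, so it no longer contributes a deviation vertex; its known inner product $1$ enters as a constant. With these conventions, every forcing step is a valid deduction about the true vectors, and soundness follows.
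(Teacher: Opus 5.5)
Your proposal is correct and follows essentially the same route as the paper. Both arguments induct over the fixed-point rounds to show that every identification and orthogonality certificate holds in every SOS representation, force a component to zero in the same three cases (grounded, odd sign cycle, $(RW3^\pm)$ inequalities in both parity classes), and conclude from the $|E_1|+|E_2|$ mutually orthogonal unit vectors. Your explicit handling of degenerate rectangle identities (self-loops giving $2d=0$) and of the consistency of $\delta_{A,B}$ after classes merge is slightly more careful than the paper, which leaves these points implicit.
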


\begin{proof}
Under \((S)\), every occupied cell occurs in exactly one displayed square, so coefficient comparison gives \(\|\mathbf v_p\|=1\) for every occupied cell \(p\). The line rule and the rectangle transfer rule are direct coefficient identities, hence valid in every SOS representation. For a selected \(2\)-edge \(\{p,q\}\in E_2\), both cell vectors are unit, so
\[
\langle \mathbf v_p,\mathbf v_q\rangle \le 1,
\]
which gives the valid inequality \(d_{\{p,q\}}\le 0\). For a complementary pair of selected \(2\)-edges, the rectangle identity gives \(d_1+d_2=0\), and the two inequalities \(d_1\le0\), \(d_2\le0\) then force \(d_1=d_2=0\). Thus every identification and every forced zero produced by one round of the signed closure is universal.

Inducting over the fixed-point rounds shows that every generated identification and every generated orthogonality certificate is universal. At the fixed point, each connected component of the signed transfer graph has deviations \(\pm t\). If the component is grounded, then \(t=0\). If it contains an odd cycle, repeated sign reversal gives \(t=-t\), hence \(t=0\). If unresolved selected \(2\)-edge inequalities occur in both parity classes, then the two-sided inequalities give \(t\le0\) and \(-t\le0\), hence again \(t=0\). Therefore every residual off-diagonal deviation certified by the fixed-point closure is zero.

Because the three terminal conditions hold, every selected \(2\)-edge is resolved, distinct selected edges remain distinct, and representatives of every two distinct selected edges are orthogonal. Hence every SOS representation contains \(|E_1|+|E_2|\) nonzero mutually orthogonal vectors. Therefore every SOS representation of \(P_G\) requires at least \(|E_1|+|E_2|\) squares, while the displayed decomposition already has exactly \(|E_1|+|E_2|\) squares. Therefore \(P_G\) is irreducible and
\[
\operatorname{SOS}(P_G)=|E_1|+|E_2|.\qedhere
\]
\end{proof}

\begin{definition}[Signed Zarankiewicz number]
Let \(z_{SL}(m,n)\) be the maximum total number of edges \(|E_1|+|E_2|\) over all limited augmented bipartite graphs with \(|E_1|=z(m,n)\) that satisfy \((S)\), whose \(1\)-edge graph \(G_1\) is \(C_4\)-free, and that are signed admissible.
\end{definition}

\begin{proposition}\label{prop:zsl-hierarchy}
For all \(m,n\ge2\),
\[
\operatorname{BSR}(m,n) \ge z_2(m,n) \ge z_{SL}(m,n) \ge z_{RL}(m,n) \ge z_{wL}(m,n) \ge z(m,n).
\]
\end{proposition}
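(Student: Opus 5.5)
The chain has five links, and I would prove them from left to right, reusing earlier results wherever they already apply.

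\emph{First link, $\operatorname{BSR}\ge z_2$.} This is immediate, as in the hierarchy theorem of Section~3. Every form counted by $z_2(m,n)$ is an $m\times n$ SOS biquadratic form. Irreducibility makes its SOS rank equal to $|E_1|+|E_2|$, and $\operatorname{BSR}(m,n)$ is the maximum SOS rank over all such forms.

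\emph{Second link, $z_2\ge z_{SL}$.} Take any graph counted by $z_{SL}(m,n)$. It satisfies $(S)$, has $|E_1|=z(m,n)$ with $G_1$ being $C_4$-free, and is signed admissible. By Theorem~\ref{thm:rw3pm-sound}, $P_G$ is irreducible with $\operatorname{SOS}(P_G)=|E_1|+|E_2|$. This is exactly the membership condition in the set defining $z_2(m,n)$, so taking maxima gives the inequality. The final two links are also already in hand: $z_{RL}\ge z_{wL}$ is Proposition~\ref{prop:zrl-lower}, and $z_{wL}\ge z$ is the hierarchy theorem of Section~3.

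\emph{Main link, $z_{SL}\ge z_{RL}$.} This is the only substantive step. It requires showing that every $(RW3^+)$-admissible graph is signed admissible. I would argue by simulation: each rule of the $(RW3^+)$ closure is reproduced inside the signed fixed-point procedure, so the signed fixed point certifies at least every identification and every orthogonality that $(RW3^+)$ certifies.

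I would establish this by induction on the derivation depth in the $(RW3^+)$ closure, treating each rule in turn.
\begin{itemize}
\item \textbf{Line rule.} A same-line pair with $\delta=0$ is a grounded vertex of the signed transfer graph, hence forced zero. A same-line selected degenerate $2$-edge has $d=0$ directly from coefficient comparison, so it is grounded and its two classes are identified.
\item \textbf{Rectangle transfer rule.} Suppose the companion diagonal is certified, either because it contains a hole, or because it is already orthogonal, or because it is a resolved $2$-edge. Then the companion vertex is grounded, or it collapses after identification. The rectangle identity supplies an edge joining it to the target vertex. The target's component is therefore grounded and forced to zero, which yields orthogonality when $\delta=0$ and identification when $\delta=1$.
\item \textbf{Complementary-pair rule.} The two selected diagonals are joined by a sign-reversing edge, and each carries the inequality $d\le 0$. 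These inequalities sit in opposite bipartition classes, so the component is forced to zero and both $2$-edges are resolved.
\item \textbf{Saturation.} This is built in, because the signed procedure works on collapsed classes.
\end{itemize}

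Once the simulation is in place, the three terminal conditions transfer directly. Every selected $2$-edge is resolved, and all certified orthogonalities persist. Distinctness of classes holds because the only identifications made in either procedure are between the two halves of a single selected $2$-edge, and $(S)$ keeps the supports of distinct edges disjoint.

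The main obstacle is the bookkeeping in this simulation, specifically the fact that the signed graph is rebuilt after each identification. I would check that collapsing classes only merges vertices and never destroys a grounded certificate, so that forced zeros are monotone along the procedure. With that monotonicity, the signed fixed point contains the $(RW3^+)$ least fixed point, and $z_{SL}\ge z_{RL}$ follows.
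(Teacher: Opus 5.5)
Your proposal is correct and follows the paper's proof: the same five-link decomposition, with $z_2\ge z_{SL}$ from the soundness of $(RW3^\pm)$, and $z_{SL}\ge z_{RL}$ from the fact that the signed fixed-point closure contains every $(RW3^+)$ certificate. Your rule-by-rule simulation, including the monotonicity check under class collapse and the observation that a companion diagonal containing a hole grounds the target directly, is just a more detailed version of the paper's one-sentence containment claim.
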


\begin{proof}
The inequality \(\operatorname{BSR}(m,n)\ge z_2(m,n)\) is part of the general hierarchy from Section~3. Theorem~\ref{thm:rw3pm-sound} gives \(z_2(m,n)\ge z_{SL}(m,n)\), since every signed admissible graph counted by \(z_{SL}\) is irreducible. If a graph satisfies \((RW3^+)\), then all selected \(2\)-edges are resolved and all pairwise selected-edge orthogonality obligations are already certified. The signed fixed-point closure contains all of those certificates and may add more, so it also reaches the three signed terminal conditions. Hence every \((RW3^+)\)-admissible graph is signed admissible, giving \(z_{SL}(m,n)\ge z_{RL}(m,n)\). The remaining inequalities \(z_{RL}(m,n)\ge z_{wL}(m,n)\ge z(m,n)\) were already proved earlier.
\end{proof}

\begin{theorem}\label{thm:p3-signed}
For the \(15\times6\) incidence construction of \(K_6\),
\[
z_{SL}(15,6)=z_2(15,6)=60.
\]
\end{theorem}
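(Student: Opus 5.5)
The plan is to squeeze $z_{SL}(15,6)$ and $z_2(15,6)$ between the same two bounds. For the upper bound I would not argue from scratch. With $p=3$, Theorem~\ref{thm:nested-p1f} already gives $z_2(15,6)=60$, and the cell bound of Proposition~\ref{prop:universal-cell-bound} gives
\[
\left\lfloor \tfrac{15\cdot 6+30}{2}\right\rfloor=60.
\]
Here $z(15,6)=30$ by Theorem~\ref{thm:incidence-unique}. Proposition~\ref{prop:zsl-hierarchy} then gives $z_{SL}(15,6)\le z_2(15,6)\le 60$. It therefore suffices to show that the nested construction for $p=3$ is signed admissible. That construction has $|E_1|=30$ and $|E_2|=3\cdot5\cdot2=30$, and it satisfies $(S)$ and $C_4$-freeness of $G_1$.

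\textbf{Step 1.} Run the ordinary rules of $(RW3^+)$ first. The construction consists entirely of complementary pairs: on each pair of rows $\{a+x,a-x\}$ inside an outer factor, with the two endpoint columns of $g$, both diagonals are selected. The complementary-pair rule therefore resolves all $30$ selected $2$-edges at once, and every occupied cell becomes identified with its partner. This gives $60$ classes, one for each selected edge. Conditions 1 and 2 of signed admissibility follow immediately, since value-one identifications only join the halves of a single selected $2$-edge and $(S)$ keeps supports disjoint.

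\textbf{Step 2.} Propagate grounded zeros through the $\binom{60}{2}$ orthogonality obligations. Same-row and same-column pairs are grounded by the line rule. Rectangle transfers starting from holes do not help, because every nonincidence cell is used, so there are no holes. Transfers from already certified pairs are carried out as in Appendix~A. Following the remark preceding the definition of $(RW3^\pm)$, I expect that for $p=3$ some components of the transfer graph remain ungrounded, and that these are exactly the odd-cycle components.

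\textbf{Step 3 (the main obstacle).} Build the signed transfer graph on the residual unordered pairs of the $60$ classes. Each rectangle identity $d_{A,B}+d_{C,D}=0$ gives a sign-reversing edge, and I would show that every ungrounded component contains an odd sign cycle. The natural source is the $\mathbb{F}_3$ structure inside each outer factor. There the involution $a\pm x$ with $x=1$ pairs the three rows cyclically: the edge labelled $a$ pairs rows $a+1$ and $a-1$. Chasing rectangles around the three labels of one factor should produce a cycle of length $3$, or another odd length, in the transfer graph.

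I would first write out one outer factor $\{g_0,g_1,g_2\}$ of a fixed perfect one-factorization of $K_6$ explicitly, then list the residual pairs and their rectangle links. The aim is to exhibit odd cycles that cover every residual component, using the automorphisms of the one-factorization to reduce to a few orbit representatives. If some component is bipartite, I would fall back on the inequality clause: each selected $2$-edge is already resolved, so I would check whether the component is instead grounded through a link to a line-rule pair.

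This case analysis is the point most likely to need machine verification via \cite{reproducibility}. After it, all residual deviations are forced to zero, condition 3 of signed admissibility holds, and Theorem~\ref{thm:rw3pm-sound} together with the upper bound above gives $z_{SL}(15,6)\ge 60$, hence equality throughout.
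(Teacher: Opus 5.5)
Your outer frame matches the paper's. Since $z(15,6)=30$, the cell bound is $60$; the hierarchy then gives $z_{SL}(15,6)\le z_2(15,6)\le 60$, and everything reduces to signed admissibility of the $p=3$ nested construction. Steps~1--2 are also right: all $30$ selected $2$-edges are resolved by the complementary-pair rule, giving $60$ distinct classes, and there are no holes.

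The gap is Step~3, which is the only new content of the theorem, and your proposed route for it looks in the wrong place. Every cell in a row $r$ of an outer factor $F$ carries a vector of an anchor of $F$. The two incidence cells belong to $r$ itself, and every other column is an endpoint of a unique other edge of $F$, whose anchor owns that cell. Both diagonals of a rectangle on rows $r,s$ pair a row-$r$ cell with a row-$s$ cell, so the signed transfer graph never mixes within-factor pairs with cross-factor pairs. Moreover, nothing within one factor is ungrounded when $p=3$. By Appendix~A.2, $P_a\cup P_b$ is a $4$-cycle, so any two matching edges meet. The only pair meeting just at $\infty$ is $\{\infty,a\}$ with $\{\infty,b\}$. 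One transfer on rows $a,b$ sends it to a pair supported on $\{b,c\}\in P_a$ and $\{a,c\}\in P_b$, where $c$ is the third label, and these have representatives in row $c$. So writing out one factor and listing its residual pairs finds nothing, and a $3$-cycle there would in any case lie in a grounded component.

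The ungrounded components are cross-factor. In the notation of Appendix~A.3, they are the entries of the diagonal blocks $X_{ii}$ not covered by family (A), which after normalizing $\alpha_i=\beta_i=i$ are seven fibres, together with the entries transported to them. The ordinary words of Appendix~A.4 fail there exactly because $3(\alpha_i-\alpha_{i-1})=0$ in $\mathbb F_3$. The odd cycles you need are the seven odd closed transfer words of Appendix~A.5, of lengths $9,5,7,5,5,7,9$; each returns its entry to itself with a minus sign. The remaining off-diagonal entries are joined to diagonal blocks by the first three words of the A.4 table and by the single move $01$ when $d=1$, while $d=-1$ is grounded by (B). This is what the paper invokes, alongside the archived signed-closure computation.

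Your bipartite fallback is also empty. After Step~1 no selected $2$-edge is unresolved, so the both-parity-classes clause can never fire. Being linked to a line-rule pair is just groundedness, which Step~2 has already exhausted. As written, condition~3 therefore rests entirely on an unspecified computation; citing the A.5 words is what closes it.
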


\begin{proof}
Appendix~A already proves that the \(p=3\) incidence construction is irreducible and hence that \(z_2(15,6)=60\). The new point here is that the same construction is also certified by the signed fixed-point criterion. In Appendix~A the proof identifies seven exceptional fibres whose entries vanish because an odd number of rectangle transfers returns each entry to itself with a minus sign. These are exactly odd cycles in the signed transfer graph.

For this \(15\times6\) witness, the ordinary \((RW3^+)\) propagation resolves the complementary pairs but leaves precisely the odd-cycle components without a grounded deviation. The signed closure additionally uses those odd cycles to force the remaining deviations to zero. An exact signed closure computation, implemented independently by two checkers, confirms that every connected component is either grounded, contains an odd cycle, or has selected \(2\)-edge variables in both parity classes. The verification records are available in \cite{reproducibility}.

Thus the construction satisfies \((RW3^\pm)\), so it is counted by \(z_{SL}(15,6)\). Hence
\[
z_{SL}(15,6)\ge60.
\]
By the universal cell bound (Proposition~3.2), \(z_2(15,6)\le60\). Since \(z_2\ge z_{SL}\), equality holds:
\[
z_2(15,6)=z_{SL}(15,6)=60.
\]
\end{proof}

\begin{corollary}[Signed four-column consequences]\label{cor:zsl-four-col}
Combining $z_2\ge z_{SL}\ge z_{RL}$ with the known four-column equalities gives
\[
z_2(m,4)=z_{SL}(m,4)=z_{RL}(m,4)
\]
at $m=5,6,13$ and for every $m\ge15$. In particular
\[
z_{SL}(5,4)=13,\qquad z_{SL}(6,4)=16,\qquad z_{SL}(13,4)=35,
\]
and Chen--Chen~\cite{cc26mx4} together with the hierarchy yield
\[
z_{SL}(m,4)=z_{RL}(m,4)=z_2(m,4)=\left\lfloor\frac{5m+6}{2}\right\rfloor
\qquad (m\ge15).
\]
\end{corollary}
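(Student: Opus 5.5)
The plan is to sandwich everything between the hierarchy of Proposition~\ref{prop:zsl-hierarchy} and equalities already available for the two ends $z_{RL}$ and $z_2$. Since $z_2(m,4)\ge z_{SL}(m,4)\ge z_{RL}(m,4)$ for every $m$, it suffices at each listed $m$ to know that $z_{RL}(m,4)=z_2(m,4)$. Both intermediate inequalities are then forced to be equalities, so $z_{SL}(m,4)$ takes the common value. No new signed-closure computation is needed; the signed parameter is pinned purely by monotonicity.

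For $m=5$ and $m=6$ I would invoke Corollary~\ref{cor:z2-54} and Corollary~\ref{cor:z2-64}. These already give $z_2=z_{RL}=13$ and $z_2=z_{RL}=16$, yielding $z_{SL}(5,4)=13$ and $z_{SL}(6,4)=16$. For $m=13$ and for $m\ge15$ I would import from Chen--Chen~\cite{cc26mx4} two things: the exact value $z_{RL}(13,4)=35$, and the eventual formula $z_{RL}(m,4)=\lfloor(5m+6)/2\rfloor$ for $m\ge15$.

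The upper side on $z_2$ is where care is needed. For $m\ge15\ge\binom42$, Corollary~\ref{cor:z2-upper-bound} with $z(m,4)=m+6$ gives
\[
z_2(m,4)\le\left\lfloor\frac{4m+m+6}{2}\right\rfloor=\left\lfloor\frac{5m+6}{2}\right\rfloor .
\]
This matches the Chen--Chen lower bound, so the whole chain collapses for $m\ge15$. At $m=13$ the universal cell bound gives $\lfloor(52+19)/2\rfloor=35$, using $z(13,4)=19$. This again matches $z_{RL}(13,4)=35$.

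The main obstacle is bookkeeping rather than mathematics. I must make sure the values cited from \cite{cc26mx4} are indeed $z_{RL}$ values obtained under the recursive-line definition of this paper, so that they are lower bounds feeding into the hierarchy. I must also check that the universal cell bound is tight at exactly these $m$; at $m=14$ it presumably is not, which would explain why $m=14$ is excluded. Once both ends agree, the three displayed equalities follow immediately.
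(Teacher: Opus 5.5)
Your proposal is correct and matches the paper's proof essentially step for step. Corollaries~\ref{cor:z2-54} and~\ref{cor:z2-64} close the sandwich $z_2\ge z_{SL}\ge z_{RL}$ at $m=5,6$, and at $m=13$ and $m\ge15$ the Chen--Chen values of $z_{RL}$ meet the universal cell bound $\lfloor(5m+6)/2\rfloor$, which equals $35$ at $m=13$. One small correction to your aside: $m=14$ is excluded not because the cell bound $38$ is known to fail there, but because $z_{RL}(14,4)\in\{37,38\}$ is still undetermined, so the sandwich does not close.
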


\begin{proof}
Corollaries~\ref{cor:z2-54} and~\ref{cor:z2-64} give the cases $m=5,6$.
For $m=13$, Chen and Chen~\cite{cc26mx4} prove $z_{RL}(13,4)=35$. The
universal cell bound of Proposition~\ref{prop:universal-cell-bound} is
\[
\left\lfloor\frac{4\cdot13+z(13,4)}{2}\right\rfloor
=\left\lfloor\frac{5\cdot13+6}{2}\right\rfloor=35,
\]
since $z(13,4)=13+6$ by Cul\'ik's formula. Hence
$35=z_{RL}(13,4)\le z_{SL}(13,4)\le z_2(13,4)\le35$. For $m\ge15$, the
same cell bound is $\lfloor(5m+6)/2\rfloor$, and~\cite{cc26mx4} proves
that $z_{RL}$ and $z_2$ both attain it, so $z_{SL}$ does as well.
\end{proof}

\begin{remark}
The equality \(z_2(15,6)=60\) was already known from the odd-prime incidence theorem. What is new in this section is the stronger certificate statement \(z_{SL}(15,6)=z_2(15,6)=60\), together with a conceptual explanation of why the exceptional \(p=3\) argument escapes ordinary grounded propagation. Whether \(z_{SL}(15,6)>z_{RL}(15,6)\) remains open, as \((RW3^+)\) may still certify another construction attaining \(60\).
\end{remark}

\begin{remark}
The signed criterion should be viewed as a lower-bound rigidity mechanism, complementary to the Gram-perturbation certificates that appear in finite reducibility arguments such as the \(5\times4\) case. There, one seeks a nonzero symmetric coefficient-preserving perturbation \(H\) to produce a lower-rank Gram representation and hence a shorter SOS decomposition. Here, by contrast, the signed deviations
\[
d_{A,B}=\langle \mathbf v_A,\mathbf v_B\rangle-\delta_{A,B}
\]
measure the freedom of an arbitrary Gram representation away from the displayed one, and the signed closure attempts to force that freedom to vanish. We do not claim a converse: an ungrounded bipartite signed component only shows that this restricted propagation system has not proved rigidity, not that a genuine PSD rank-reducing perturbation must exist.
\end{remark}

\section{Conclusions}

Recursive orthogonality propagation gives a sound certificate for
irreducibility and yields strict improvements over the weak framework.
The three-column theorem determines $z_2(m,3)=z_{RL}(m,3)=2m$ for every
$m\ge3$, while Corollary~\ref{cor:three-column-gap} quantifies its gap from
the weak value. The finite values requiring separate computations are
collected below.
\begin{center}
\begin{tabular}{c|rr}
\toprule
Grid & $z_{wL}(m,n)$ & $z_{RL}(m,n)$\\
\midrule
$(4,4)$ & 10 & 10\\
$(5,4)$ & 12 & 13\\
$(5,5)$ & 16 & 17\\
$(6,4)$ & 14 & 16\\
$(7,4)$ & 17 & 19\\
$(7,7)$ & 28 & $\ge32$\\
\bottomrule
\end{tabular}
\end{center}

In four columns one should distinguish equality $z_2=z_{RL}$ from
eventual saturation of the universal cell bound. Write
$U_m=\lfloor(5m+6)/2\rfloor$ for $m\ge6$; this is the cell bound of
Proposition~\ref{prop:universal-cell-bound} once $z(m,4)=m+6$. The
present status is as follows.
\begin{center}
\small
\begin{tabular}{crrr}
\toprule
$m$ & $z_{RL}(m,4)$ & $U_m$ & $z_2(m,4)$\\
\midrule
$5$ & $13$ & $15$ & $13$\\
$6$ & $16$ & $18$ & $16$\\
$7$ & $19$ & $20$ & $19\le z_2\le20$\\
$8$ & $21$ & $23$ & $21\le z_2\le23$\\
$9$ & $24$ & $25$ & $24\le z_2\le25$\\
$10$ & $27$ & $28$ & $27\le z_2\le28$\\
$11$ & $29$ & $30$ & $29\le z_2\le30$\\
$12$ & $32$ & $33$ & $32\le z_2\le33$\\
$13$ & $35$ & $35$ & $35$\\
$14$ & $37$ or $38$ & $38$ & $37\le z_2\le38$\\
$\ge15$ & $U_m$ & $U_m$ & $U_m$\\
\bottomrule
\end{tabular}
\end{center}
The values at $m=5,6$ are proved in this paper. The values
$z_{RL}(13,4)=35$ and $z_2(m,4)=z_{RL}(m,4)=U_m$ for $m\ge15$ are due to
Chen and Chen~\cite{cc26mx4}; cell-bound saturation at $m=13$ then forces
$z_{SL}(13,4)=z_2(13,4)=35$ as well. In particular, equality
$z_2=z_{RL}$ may occur well before cell-bound saturation.

The incidence construction gives another infinite exact family. For
$N=2p$ with $p$ an odd prime, the gap from the weak parameter is at least
\[
\frac{N(N-1)(N-2)}{16}-\frac{N}{4}.
\]
For $p\ge5$, the same construction attains the exact value of both $z_2$
and $z_{RL}$, while for $p=3$ the signed criterion gives
$z_{SL}(15,6)=z_2(15,6)=60$. These results leave the general relationship
between the recursive certificate and irreducibility as the central
question below.

\section{Open Problems}

\begingroup
\small
\begin{enumerate}
    \item \textbf{Determine the remaining four-column cases for $z_2=z_{RL}$.}
    This paper proves $z_2(5,4)=z_{RL}(5,4)=13$ and
    $z_2(6,4)=z_{RL}(6,4)=16$. Chen and Chen~\cite{cc26mx4} prove
    $z_{RL}(13,4)=35$, hence $z_2(13,4)=z_{SL}(13,4)=z_{RL}(13,4)=35$,
    and $z_2(m,4)=z_{RL}(m,4)=U_m$ for all $m\ge15$. Thus, among the
    cases $m\ge5$, the unresolved values are
    $m=7,8,9,10,11,12,14$. Separately, this paper proves
    $z_{RL}(4,4)=10$, but the unrestricted equality $z_2(4,4)=10$ is not
    established here. Determine whether $z_2(m,4)=z_{RL}(m,4)$ holds in
    these remaining cases, or whether some such $m$ satisfies
    $z_2(m,4)>z_{RL}(m,4)$.

    \item \textbf{Conjecture: $z_2=z_{RL}$.} The results of this paper support
    the conjecture $z_2(m,n)=z_{RL}(m,n)$ for all $(m,n)$. The two-column
    identity, the exact three-column family, the exact four-column results for
    $m=5,6,13$ and for all $m\ge 15$~\cite{cc26mx4}, and
    Corollary~\ref{cor:nested-rw3plus} on the incidence subsequence
    \[
    n=2p,\qquad m=\binom{2p}{2},\qquad p\ge3\ \text{odd prime},
    \]
    provide substantial evidence. More specifically, together with
    Cul\'ik's formula $z(m,n)=m+\binom{n}{2}$ for
    $m\ge \binom{n}{2}$~\cite{culik56}, these results suggest that for each
    fixed $n\ge4$ there may exist threshold functions $g_2(n)\le g_{SL}(n)\le g_{RL}(n)$
    such that the corresponding parameter equals
    $\left\lfloor \frac{mn+z(m,n)}{2}\right\rfloor$ for all $m\ge g_{\bullet}(n)$.
    Chen--Chen prove that $15$ is a valid threshold for $z_{RL}(\,\cdot\,,4)$,
    and also prove saturation at $m=13$, while $37\le z_{RL}(14,4)\le38$;
    thus the least four-column recursive threshold satisfies
    $g_{RL}(4)\in\{13,15\}$. The remaining question is whether some pair with
    $n\ge4$, outside the cases already covered above, satisfies
    $z_2(m,n)>z_{RL}(m,n)$. More generally, does some pair with $n\ge4$
    satisfy $z_{SL}(m,n)>z_{RL}(m,n)$?

\end{enumerate}
\endgroup

\section*{Appendix A: A Complete Proof of Theorem~\ref{thm:nested-p1f}}

The proof is written directly at the level of coefficient vectors. Its only external combinatorial input is the classical existence of a perfect one-factorization of $K_{2p}$ for odd prime $p$.

\subsection*{A.1 Construction and simultaneous resolution}

Let $N=2p$, where $p$ is an odd prime. The columns are the vertices of $K_{2p}$ and the rows are its edges. Let $E_1$ be the incidence graph. Choose an outer perfect one-factorization $\mathcal F$ of $K_{2p}$. In every outer factor $F\in\mathcal F$, label its $p$ row-edges by $\mathbb F_p$.

For an anchor row $g\in F$ with label $a$, pair the remaining row labels by
\[
a+x \longleftrightarrow a-x,\qquad x=1,\dots,\frac{p-1}{2},
\]
and on each resulting pair of rows use the two endpoints of $g$ as columns and select both complementary diagonals as 2-edges. Every nonincidence cell is used exactly once. Hence
\[
|E_2|=p(2p-1)(p-1),\qquad R:=|E_1|+|E_2|=p(2p-1)(p+1).
\]

Consider one selected complementary rectangle with coefficient vectors
$v_{11},v_{12},v_{21},v_{22}$. Then coefficient comparison gives
\[
\langle v_{11},v_{22}\rangle+\langle v_{12},v_{21}\rangle=2.
\]
All occupied-cell vectors have norm one, so both inner products are at most one. Therefore
\[
v_{11}=v_{22},\qquad v_{12}=v_{21}.
\]
Thus every selected 2-edge resolves to one well-defined unit vector. The selected edges can be partitioned by their outer anchor, and every anchor carries $p+1$ resolved vectors.

\subsection*{A.2 Rigidity inside one outer factor}

Fix an outer factor $F$ and put
\[
\widehat{\mathbb F}_p=\mathbb F_p\cup\{\infty\}.
\]
For $a\in\mathbb F_p$, define the involution
\[
\rho_a(\infty)=a,\qquad \rho_a(a)=\infty,\qquad \rho_a(t)=2a-t\quad (t\notin\{a,\infty\}).
\]
Let the two endpoint columns of anchor $a$ be denoted by $a_0,a_1$. Its $p+1$ resolved vectors are indexed as
\[
q_{a,t},\qquad t\in \widehat{\mathbb F}_p,
\]
so that in the row labelled $r\in\mathbb F_p$, the cells in columns $a_0,a_1$ carry
\[
q_{a,r},\qquad q_{a,\rho_a(r)}.
\]

The underlying unordered pairs
\[
P_a=\bigl\{\{\infty,a\}\bigr\}\cup\bigl\{\{a+x,a-x\}:x=1,\dots,\tfrac{p-1}{2}\bigr\}
\]
form the standard perfect one-factorization of $K_{p+1}$. For completeness, perfectness can be checked directly. For $a\neq b$, an affine change of labels sends $(a,b)$ to $(0,1)$, so it is enough to inspect $P_0\cup P_1$. With $q=(p-1)/2$, the alternating cycle is
\[
\infty,0,2,-2,4,-4,\dots,2q,-2q=1,\infty.
\]
Multiplication by $2$ permutes $\mathbb F_p^\times$, so every finite label occurs exactly once. Hence $P_a\cup P_b$ is a Hamilton cycle for every $a\neq b$.

For fixed $a$, distinct $q_{a,t}$ are orthogonal. If $t,u$ are finite they have representatives in the same endpoint column $a_0$. If one label is $\infty$ and the other is $u\neq a$, they have representatives in column $a_1$. Finally $q_{a,a}$ and $q_{a,\infty}$ are the two incidence cells in the anchor row. In every case a same-row or same-column coefficient is zero.

Now let $a\neq b$. Since $P_a\cup P_b$ is an alternating Hamilton cycle, write its matching edges cyclically as
\[
A_0,B_0,A_1,B_1,\dots,A_{q-1},B_{q-1},\qquad q=\frac{p+1}{2},
\]
with $A_k\in P_a$ and $B_k\in P_b$. If the two underlying matching edges of an oriented pair share a finite vertex, the corresponding vectors have representatives in the same row and are orthogonal.

Otherwise choose one of the two arcs of the alternating Hamilton cycle joining the two matching edges so that the boundary incidences are finite and $\infty$ is not in the interior. For ordinary matching edges either incident vertex is available; for the unique edge containing $\infty$, use its finite endpoint. If the pair consists of the two matching edges meeting at $\infty$, first perform one rectangle transfer using their finite endpoints. This produces two ordinary matching edges, so this preliminary step cannot recur.

After cyclic relabeling, suppose the chosen arc runs
\[
A_k,B_k,A_{k+1},\dots,A_\ell,B_\ell.
\]
Choose the finite endpoint $r$ of $A_k$ incident with $B_k$ and the finite endpoint $s$ of $B_\ell$ incident with $A_\ell$. The rectangle on rows $r,s$ and the two selected outer endpoint columns sends the current inner product, with a minus sign, to one supported on $A_\ell,B_k$. If $\ell=k$, this new pair is adjacent at the boundary finite vertex and is zero. If $\ell=k+1$, the pair $A_{k+1},B_k$ is also adjacent at a finite vertex and is zero. If $\ell\ge k+2$, use the other finite endpoint of $A_\ell$, incident with $B_{\ell-1}$, and the other finite endpoint of $B_k$, incident with $A_{k+1}$. A second rectangle transfer gives a pair supported on $A_{k+1},B_{\ell-1}$. Thus two transfers shorten the chosen arc by four alternating edges. Iteration reaches one of the adjacent zero cases. Therefore all distinct resolved vectors belonging to the same outer factor are mutually orthogonal.

\subsection*{A.3 Cross-factor transfer equations}

Let $F,G$ be distinct outer factors. Since the outer factorization is perfect, $F\cup G$ is a Hamilton cycle. Index its vertices cyclically as
\[
c_0,c_1,\dots,c_{2p-1}
\]
so that
\[
F_i=\{c_{2i},c_{2i+1}\},\qquad G_j=\{c_{2j+1},c_{2j+2}\},\qquad i,j\in\mathbb Z_p.
\]
Let $\alpha_i,\beta_j\in\mathbb F_p$ be the row labels assigned to $F_i,G_j$. Both $\alpha$ and $\beta$ are permutations of $\mathbb F_p$. Write
\[
q^F_{i,t}:=q_{\alpha_i,t},\qquad q^G_{j,u}:=q_{\beta_j,u},
\]
and
\[
X_{ij}(t,u)=\langle q^F_{i,t},q^G_{j,u}\rangle,\qquad t,u\in \widehat{\mathbb F}_p.
\]
A vector $q^F_{i,t}$ has a representative at endpoint $0$ of $F_i$ exactly when $t\neq \infty$, and at endpoint $1$ exactly when $t\neq \alpha_i$. Similarly $q^G_{j,u}$ is available at endpoint $0$ exactly when $u\neq \infty$, and at endpoint $1$ exactly when $u\neq \beta_j$.

Choose available endpoints $\varepsilon,\eta\in\{0,1\}$ of $F_i,G_j$. If the two chosen outer columns are different, the corresponding rectangle identity gives
\begin{equation}\label{eq:cross-transfer}
X_{ij}(t,u)=-X_{j+\eta,i-1+\varepsilon}\bigl(T_{\varepsilon\eta}t,U_{\varepsilon\eta}u\bigr),
\tag{A.1}
\end{equation}
with all block indices in $\mathbb Z_p$, where
\[
\begin{array}{c|cccc}
(\varepsilon,\eta) & (0,0) & (0,1) & (1,0) & (1,1)\\
\hline
T_{\varepsilon\eta} & \rho_{\alpha_j} & I & \rho_{\alpha_j}\circ\rho_{\alpha_i} & \rho_{\alpha_i}\\
U_{\varepsilon\eta} & \rho_{\beta_{i-1}} & \rho_{\beta_{i-1}}\circ\rho_{\beta_j} & I & \rho_{\beta_j}
\end{array}
\]
The move $(1,0)$ is unavailable when $j=i$, and $(0,1)$ is unavailable when $j=i-1$, because in those two cases the selected endpoints are the same outer column.

The same-column equations give two grounded zero families:
\[
X_{ii}(t,u)=0\qquad (t\neq \alpha_i,\ u\neq \infty), \tag{A}
\]
\[
X_{i,i-1}(t,u)=0\qquad (t\neq \infty,\ u\neq \beta_{i-1}). \tag{B}
\]

\subsection*{A.4 Cross-factor rigidity for $p\ge 5$}

First consider a diagonal block $X_{ii}$. Family (A) leaves only the cases in the first column below. A word such as $01,11,10$ means successive applications of \eqref{eq:cross-transfer} with those endpoint choices.
\[
\begin{array}{c|c}
\text{initial fibre} & \text{word and terminal entry}\\
\hline
(\alpha_i,u),\ u\in\mathbb F_p\setminus\{\beta_i\}
& 01,11,10 \leadsto X_{i+1,i}(4\alpha_{i+1}-3\alpha_i,\rho_{\beta_i}(u))\\
(\alpha_i,\beta_i)
& 00,10,00,01 \leadsto X_{i-1,i-1}(3\alpha_i-2\alpha_{i-1},\beta_i)\\
(t,\infty),\ t\in\mathbb F_p\setminus\{\alpha_i\}
& 01,00,10 \leadsto X_{i,i-1}(\rho_{\alpha_i}(t),3\beta_i-2\beta_{i-1})\\
(\infty,\infty)
& 11,10,11,01 \leadsto X_{i+1,i+1}(\rho_{\alpha_{i+1}}(\alpha_i),4\beta_{i+1}-3\beta_i)\\
(\alpha_i,\infty)
& 01,00,00,01 \leadsto X_{i-1,i-1}(3\alpha_i-2\alpha_{i-1},4\beta_{i-1}-3\beta_i)
\end{array}
\]
The first and third terminal entries belong to family (B); the other three belong to family (A). Direct substitution in \eqref{eq:cross-transfer} verifies the terminal formulas and endpoint validity at every intermediate step. The only nontrivial arithmetic reduces to identities such as
\[
3(\alpha_i-\alpha_{i-1})\neq 0,\qquad 3(\beta_i-\beta_{i-1})\neq 0,
\]
which hold for $p\ge 5$ because $\alpha,\beta$ are permutations. Thus
\[
X_{ii}\equiv 0\qquad\text{for every }i.
\]

It remains to reduce the off-diagonal blocks to a diagonal block. Put
\[
d=j-i\in\{1,\dots,p-1\}.
\]
For every nonzero $d$, one of the following endpoint-valid words applies:
\[
\begin{array}{c|c}
\text{fibre condition} & \text{word}\\
\hline
u\in\mathbb F_p,\ t\in\mathbb F_p,\ t\neq \alpha_j & 00,01\\
u\in\mathbb F_p,\ t\in\{\infty,\alpha_j\} & 10,00\\
u=\infty,\ t\neq \alpha_i & 11,01\\
u=\infty,\ t=\alpha_i,\ d\neq -1 & 01,00,10,01
\end{array}
\]
In the remaining case $u=\infty$, $t=\alpha_i$, $d=-1$, family (B) already gives zero. Directly from the block-index part of \eqref{eq:cross-transfer}, every word in the table sends
\[
d\longmapsto d-1 \pmod p.
\]
For the first three rows, endpoint validity follows from distinctness of the relevant $\alpha$- and $\beta$-values. For the last row, the final endpoint condition is
\[
3(\beta_j-\beta_{i-1})\neq 0,
\]
which holds for $p\ge 5$ because $j\neq i-1$. Repetition transports every off-diagonal entry to a diagonal block, where it is zero. Hence
\[
X_{ij}(t,u)=0\qquad\text{for all }i,j,t,u,\ p\ge 5.
\]

\subsection*{A.5 The exceptional prime $p=3$}

Every permutation of $\mathbb F_3$ is affine, and affine relabeling preserves the reflection system $\rho_a$. Thus, independently in the two factors, one may normalize
\[
\alpha_i=\beta_i=i.
\]
By (A), the only entries of $X_{00}$ not already zero are
\[
(t,u)=(0,0),(0,1),(0,2),(0,\infty),(1,\infty),(2,\infty),(\infty,\infty).
\]
For these seven fibres the following endpoint-valid transfer words return to the same entry:
\[
\begin{array}{c|c}
(t,u) & \text{odd closed transfer word}\\
\hline
(0,0) & 00,10,00,01,00,10,00,10,11\\
(0,1) & 00,10,00,00,01\\
(0,2) & 01,00,01,00,10,00,01\\
(0,\infty) & 01,00,00,00,01\\
(1,\infty) & 01,00,00,10,00\\
(2,\infty) & 01,00,10,00,01,00,01\\
(\infty,\infty) & 11,10,00,10,00,01,00,10,00
\end{array}
\]
Substitution in \eqref{eq:cross-transfer} verifies endpoint validity and return to the initial fibre. Each rectangle transfer contributes a minus sign and every word above has odd length. Consequently every exceptional entry satisfies $X=-X$ and is zero. Cyclic symmetry gives $X_{ii}=0$ for all $i$.

For an off-diagonal block, the first three cases of the preceding off-diagonal table remain valid. The only remaining state has $u=\infty$ and $t=\alpha_i=i$. If $d=-1$, it is already zero by (B); if $d=1$, the single move $01$ lands in a diagonal block because in $\mathbb Z_3$
\[
d\longmapsto -d-2 = 0.
\]
Thus all cross-factor inner products vanish for $p=3$ as well.

\subsection*{A.6 Conclusion and equality}

The preceding arguments show that in every SOS representation all
\[
R=p(2p-1)(p+1)
\]
resolved selected-edge vectors are mutually orthogonal. Hence every SOS representation has at least $R$ squares, while the displayed decomposition has exactly $R$ squares. Therefore
\[
z_2\left(\binom{2p}{2},2p\right)\ge p(2p-1)(p+1).
\]
For the reverse inequality, first note that
\[
z\left(\binom{N}{2},N\right)=N(N-1).
\]
With
\[
m=\binom{2p}{2}=p(2p-1),\qquad n=2p,
\]
Proposition~\ref{prop:universal-cell-bound} gives
\[
z_2(m,n)\le \frac{mn+z(m,n)}{2}=p(2p-1)(p+1).
\]
Combining the two inequalities proves
\[
z_2\left(\binom{2p}{2},2p\right)=p(2p-1)(p+1)
\]
for every odd prime $p$.

\clearpage
\phantomsection
\section*{Appendix B: Local certificates for $z_2(5,4)\le13$}
\label{app:z254}
\setcounter{theorem}{0}
\renewcommand{\thetheorem}{B.\arabic{theorem}}
\begingroup\small

This appendix records the local data used in the upper bound of
Corollary~\ref{cor:z2-54}. Throughout, a displayed form is either a
single cell $x_iy_j$ or a selected two-cell form $x_iy_j+x_ky_\ell$,
and irreducibility means that the sum of their squares cannot be
represented by fewer bilinear squares. Write $u_{ij}=x_iy_j$. Replacing
a displayed sub-sum by fewer squares shortens the whole presentation by
Lemma~\ref{lem:hereditary}.

\subsection*{B.1 Reduction rules}

\begin{lemma}[Strip overload]\label{lem:strip-overload}
A displayed sub-sum on two rows and $q$ columns, or on $q$ rows and two
columns, is reducible if it has more than $q+1$ squares.
\end{lemma}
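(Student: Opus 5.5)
The plan is to reduce everything to the exact two-column formula $\operatorname{BSR}(m,2)=m+1$ from \cite{qi1}, together with hereditary irreducibility. Take the case of two rows and $q$ columns; the transposed case is symmetric. The displayed sub-sum is then a sum of squares of bilinear forms in $(x_{i},x_{k})$ and $(y_1,\dots,y_q)$, so it is a $q\times 2$ biquadratic form after swapping the roles of $\mathbf x$ and $\mathbf y$. The symmetry $a_{ijkl}=a_{klij}$ permits this swap, and SOS rank is unchanged by it. The defining quantity $\operatorname{BSR}(q,2)$ is the maximum SOS rank over all $q\times 2$ SOS biquadratic forms.

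First, if the sub-sum has $s>q+1$ displayed squares, its SOS rank is at most $\operatorname{BSR}(q,2)=q+1<s$. So it can be rewritten with fewer squares, and the sub-sum is reducible. Second, I would pass to the whole presentation: if the ambient displayed decomposition contains this sub-sum, replace it by the shorter representation and keep the other squares. This shortens the whole presentation, which is the contrapositive of Lemma~\ref{lem:hereditary}. The lemma as stated only asserts reducibility of the sub-sum itself, so this step is a consequence rather than part of the proof.

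The only point needing care is that the sub-sum really lives on the strip. That is, each included displayed square must have its full support within the two rows (respectively two columns). Cells outside the strip contribute no variables, and a possibly degenerate $q\times 2$ form still has rank at most $\operatorname{BSR}(q,2)$. When $q<2$ I would either invoke the trivial bound, or note that $\operatorname{BSR}(1,2)=2$ is just the Cauchy--Binet/Lagrange-type bound for products of two-variable forms. I expect no real obstacle beyond confirming that the cited formula $\operatorname{BSR}(m,2)=m+1$ holds for all $m\ge 1$.
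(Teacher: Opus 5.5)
Your proposal is correct and is essentially the paper's own proof: the paper applies $\operatorname{BSR}(q,2)\le q+1$ from \cite{qi1} to the strip sub-sum and keeps the remaining squares, exactly as you do. One minor point: the $\mathbf x\leftrightarrow\mathbf y$ swap needs only a renaming of variables, not the coefficient symmetry $a_{ijkl}=a_{klij}$, but this does not affect your argument.
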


\begin{proof}
Apply $\operatorname{BSR}(q,2)\le q+1$ from~\cite{qi1} to that sub-sum,
retaining the other squares.
\end{proof}

\begin{lemma}[Product relation]\label{lem:product-relation}
Suppose displayed forms $f_1,\dots,f_N$ satisfy
\[
\sum_{i<j} c_{ij}f_if_j=0,\qquad (c_{ij})\ne0.
\]
Then their square sum is reducible.
\end{lemma}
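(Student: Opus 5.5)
The plan is to produce an explicit shorter representation of the square sum, using the Gram-matrix description of sums of squares. Set $f=(f_1,\dots,f_N)^\top$, so that $\sum_i f_i^2=f^\top I_N f$. Each $f_i$ is a bilinear form, i.e.\ a linear combination of the monomials $u_{ij}=x_iy_j$. Write $f=Mu$, where $u$ collects the monomials $u_{ij}$ and $M$ is an $N\times mn$ coefficient matrix.

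Symmetrize the coefficients by setting $H=(h_{ij})$ with $h_{ij}=h_{ji}=c_{ij}/2$ for $i<j$ and $h_{ii}=0$. Then $H$ is a nonzero symmetric matrix with zero diagonal, and the hypothesis says exactly that $f^\top H f=\sum_{i<j}c_{ij}f_if_j=0$ identically. Hence for every real $\lambda$,
\[
\sum_i f_i^2=f^\top(I_N+\lambda H)f .
\]

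The key step is the choice of $\lambda$. Since $H\neq0$ and $\operatorname{tr}H=0$, the matrix $H$ has both a strictly positive and a strictly negative eigenvalue. Let $\mu<0$ be its most negative eigenvalue and take $\lambda=-1/\mu>0$. Then $I_N+\lambda H$ is positive semidefinite: on each eigenvector with eigenvalue $\nu$ it acts as $1+\lambda\nu$, and $1+\lambda\nu\ge 1+\lambda\mu=0$. It is also singular, because it kills an eigenvector for $\mu$. So its rank is $k\le N-1$.

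Factor $I_N+\lambda H=L^\top L$ with $L$ a $k\times N$ real matrix. Then
\[
\sum_i f_i^2=\|Lf\|^2=\sum_{s=1}^k g_s^2,
\]
where each $g_s$ is a linear combination of the $f_i$, hence again a bilinear form. This writes the sub-sum with at most $N-1$ squares, which is what reducibility of the square sum means.

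The one point needing care is that the relation $\sum_{i<j}c_{ij}f_if_j=0$ is an identity of polynomials in $\mathbf x,\mathbf y$, not merely a vanishing of coefficient vectors. This is what $f^\top Hf=0$ requires, and it holds as stated. I therefore expect no real obstacle: the only nontrivial step is producing a negative eigenvalue, and that comes from zero trace together with $H\neq0$.

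If the lemma is to be used inside a larger displayed decomposition, Lemma~\ref{lem:hereditary} (in its contrapositive form) then makes the whole decomposition reducible: replacing the $N$ displayed squares by the $k\le N-1$ squares $g_s$ and keeping every other displayed square unchanged shortens the full presentation.
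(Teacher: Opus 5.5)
Your proof is correct and matches the paper's argument: the paper builds the same zero-diagonal symmetric $H$ from the $c_{ij}$ and factors $I-H/\lambda_{\max}(H)$, which is positive semidefinite of rank at most $N-1$, whereas you use the most negative eigenvalue, which is the same construction applied to $-H$. The paper also stresses that $(c_{ij})\ne0$ must be read after collecting equal formal products, and this is precisely the condition $H\neq0$ that your argument relies on.
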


\begin{proof}
Let $H_{ij}=H_{ji}=c_{ij}/2$ and $H_{ii}=0$. Then $H\ne0$,
$\operatorname{tr}H=0$, and $f^\top Hf=0$. Thus
$\lambda=\lambda_{\max}(H)>0$, and
\[
I-H/\lambda\succeq0,\qquad
\operatorname{rank}(I-H/\lambda)\le N-1,\qquad
f^\top(I-H/\lambda)f=f^\top f.
\]
A real symmetric PSD factorization proves the claim. The condition
$(c_{ij})\ne0$ refers to the coefficients after collecting equal formal
products. Every product relation below follows by expanding and using
\begin{equation}\label{eq:cell-product}
(x_iy_j)(x_ky_\ell)=(x_iy_\ell)(x_ky_j).
\end{equation}
\end{proof}

\begin{lemma}[Two local SOS reductions]\label{lem:local-sos}
For distinct rows $x,y,z$ and distinct indicated columns,
\begin{align}
&(xb+za)^2+(ya+zd)^2+(xd)^2+(yb)^2+(yd)^2
\nonumber\\
&\qquad=(xb+yd+za)^2+(xd-yb)^2+(ya)^2+(zd)^2,
\label{eq:sos-five}\\
&(xa)^2+(zc)^2+(ya)^2+(yb)^2+(zb)^2+(xc+zd)^2+(xd+za)^2
\nonumber\\
&\qquad=(xc)^2+(ya+zb)^2+(yb-za)^2+(xd+zc)^2+(xa+zd)^2.
\label{eq:sos-seven}
\end{align}
They shorten five squares to four and seven squares to five, respectively.
\end{lemma}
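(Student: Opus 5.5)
Both identities are formal polynomial identities in the cell monomials $u_{ij}=x_iy_j$. I would prove them by expanding each side and comparing, using the single relation \eqref{eq:cell-product}, $(x_iy_j)(x_ky_\ell)=(x_iy_\ell)(x_ky_j)$. Throughout, a symbol such as $xb$ denotes the monomial $x_xy_b$ for row $x$ and column $b$, and all rows and columns involved are distinct. The plan is to check that the square terms on the two sides agree as multisets, and then that the cross terms agree modulo \eqref{eq:cell-product}.

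For \eqref{eq:sos-five}, the squares on both sides are $(xb)^2,(za)^2,(ya)^2,(zd)^2,(xd)^2,(yb)^2,(yd)^2$, each appearing once. So it remains to compare cross terms.
\begin{itemize}
\item The left side contributes $2\,xb\,za+2\,ya\,zd$.
\item The right side contributes $2\,xb\,yd+2\,xb\,za+2\,yd\,za-2\,xd\,yb$.
\end{itemize}
The difference, right minus left, is
\[
2(xb\,yd-xd\,yb)+2(yd\,za-ya\,zd).
\]
Each bracket vanishes by \eqref{eq:cell-product}: $x_xy_b\,x_yy_d=x_xy_d\,x_yy_b$ and $x_yy_d\,x_zy_a=x_yy_a\,x_zy_d$.

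For \eqref{eq:sos-seven}, both sides again have the same squares: $(xa)^2,(xc)^2,(xd)^2,(ya)^2,(yb)^2,(za)^2,(zb)^2,(zc)^2,(zd)^2$.
\begin{itemize}
\item The left-side cross terms are $2\,xc\,zd+2\,xd\,za$.
\item The right-side cross terms are $2\,ya\,zb-2\,yb\,za+2\,xd\,zc+2\,xa\,zd$.
\end{itemize}
By \eqref{eq:cell-product} we have $ya\,zb=yb\,za$, $xd\,zc=xc\,zd$ and $xa\,zd=xd\,za$. So the right side collapses exactly to the left side.

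Counting squares then gives the claim: five squares are replaced by four in \eqref{eq:sos-five}, and seven by five in \eqref{eq:sos-seven}. The only real care point is bookkeeping the signs and making sure every cross product is paired with its partner under \eqref{eq:cell-product}. There is no genuine obstacle.
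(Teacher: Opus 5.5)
Your proof is correct and matches the paper's argument: you compare the diagonal cell-square terms and then cancel the cross terms using the same product relations, namely $(xb)(yd)=(xd)(yb)$ and $(za)(yd)=(ya)(zd)$ for the five-square identity, and $(xc)(zd)=(xd)(zc)$, $(xd)(za)=(xa)(zd)$, $(ya)(zb)=(yb)(za)$ for the seven-square identity. The only blemish is notational: writing $x_xy_b$ reuses the row letter $x$ as a variable name, so the paper's $u_{xb}$ would be cleaner.
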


\begin{proof}
The diagonal cell-square coefficients agree. For~\eqref{eq:sos-five},
cancel the cross terms using $(xb)(yd)=(xd)(yb)$ and
$(za)(yd)=(ya)(zd)$. For~\eqref{eq:sos-seven}, use
$(xc)(zd)=(xd)(zc)$, $(xd)(za)=(xa)(zd)$, and $(ya)(zb)=(yb)(za)$.
All squares outside the indicated sub-sum are retained.
\end{proof}

Write $S(x,y,z;a,b,d)$ for the substitution of~\eqref{eq:sos-five} and
$E(x,y,z;a,b,c,d)$ for the substitution of~\eqref{eq:sos-seven}. For a
set $R$ of row letters and $C$ of column letters, $T_{R,C}$ means: take
all ordinary squares in $R\times C$ and the one or two selected pairs
stated in that table row. All their halves lie in the strip, and the
square count exceeds the strip bound of Lemma~\ref{lem:strip-overload}.
Every single-pair $T$-certificate below has five squares on a $2\times3$
or $3\times2$ strip; every two-pair $T$-certificate has six squares on a
$2\times4$ or $4\times2$ strip.

\begin{lemma}[Hole pressure]\label{lem:hole-pressure}
For an irreducible simple $m\times4$ presentation with $T$ squares, let
$h(D)$ count the holes in two columns $D$, and $p(D)$ the selected pairs
wholly in $D$. Then
\[
h(D)+p(D)\le 3m+1-T.
\]
For $m=5$ and $T=14$, the right side is $2$.
\end{lemma}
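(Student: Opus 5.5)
The plan is to mimic the proof of Theorem~\ref{thm:m3-upper}: split the displayed squares into those that touch the two columns $D$ and those that do not, and bound each part separately. Write $D=\{c,c'\}$ and let $D'$ be the complementary pair of columns.

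First, bound the squares living on $D'$. Every displayed square whose support lies entirely in $D'$ belongs to an $m\times 2$ sub-sum. By Lemma~\ref{lem:hereditary} this sub-sum is irreducible, since it is obtained by deleting complete displayed squares. Hence it has at most $\operatorname{BSR}(m,2)=m+1$ squares, using the two-column formula from \cite{qi1}.

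Second, count the squares that touch $D$. Each such square has at least one half in the $2m$ cells of $D$. Of these cells, $h(D)$ are holes, so exactly $2m-h(D)$ are occupied, and by simplicity each occupied cell lies in exactly one square. A square touching $D$ uses either one or two cells of $D$. It uses two precisely when it is a selected pair wholly inside $D$, and there are $p(D)$ of these. The number of squares touching $D$ is therefore
\[
(2m-h(D))-p(D).
\]

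Finally, add the two bounds. Every displayed square either touches $D$ or lies wholly in $D'$, so
\[
T\le 2m-h(D)-p(D)+m+1,
\]
which rearranges to $h(D)+p(D)\le 3m+1-T$. Substituting $m=5$ and $T=14$ gives the value $2$.

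The only step needing care is the second one. Without simplicity, a cell could be covered by two squares and the count of squares touching $D$ would not reduce to a count of occupied cells. Under simplicity the argument is just bookkeeping.
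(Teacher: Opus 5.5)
Your proposal is correct and follows the paper's proof. The paper also uses simplicity to count exactly $2m-h(D)-p(D)$ squares touching $D$. It then bounds the squares on the complementary two columns by $m+1$ via Lemma~\ref{lem:strip-overload}, which is precisely your combination of Lemma~\ref{lem:hereditary} with $\operatorname{BSR}(m,2)=m+1$.
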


\begin{proof}
The $2m-h(D)$ occupied cells in $D$ touch exactly
$2m-h(D)-p(D)$ displayed squares. The complementary two columns
therefore support $T-2m+h(D)+p(D)$ squares, which cannot exceed
$m+1$ by Lemma~\ref{lem:strip-overload}.
\end{proof}

\begin{lemma}[Counting cover]\label{lem:counting-cover}
Let $e_1,\dots,e_M$ contain every pair that can occur in an irreducible
presentation over a fixed ordinary base. Suppose each index set $I_\ell$
contains at most one selected pair. If positive integer weights $w_\ell$
satisfy
\[
\sum_{\ell:\,i\in I_\ell} w_\ell\ge d \qquad(1\le i\le M),
\]
then $d|E_2|\le\sum_\ell w_\ell$.
\end{lemma}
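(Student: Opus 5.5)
The statement is a weighted double-counting (fractional covering) bound, and I would prove it by summing a single quantity in two ways. Let $\xi_i\in\{0,1\}$ be the indicator that $e_i$ is selected in the given irreducible presentation, so $|E_2|=\sum_{i=1}^M\xi_i$. The hypothesis that $e_1,\dots,e_M$ contain every pair that can occur means every selected pair is one of the $e_i$, so this sum really counts all of $E_2$.

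The quantity to count is
\[
\Sigma=\sum_\ell w_\ell\sum_{i\in I_\ell}\xi_i .
\]
For the upper bound, the class hypothesis says that each $I_\ell$ contains at most one selected pair, so $\sum_{i\in I_\ell}\xi_i\le 1$. Since the weights are positive, this gives $\Sigma\le\sum_\ell w_\ell$. For the lower bound, exchange the order of summation:
\[
\Sigma=\sum_{i=1}^M\xi_i\sum_{\ell:\,i\in I_\ell}w_\ell .
\]
The covering hypothesis gives $\sum_{\ell:\,i\in I_\ell}w_\ell\ge d$ for every $i$, and $\xi_i\ge0$, so $\Sigma\ge d\sum_i\xi_i=d|E_2|$. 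Combining the two estimates yields $d|E_2|\le\sum_\ell w_\ell$.

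None of these steps is difficult. The only point that needs care is the meaning of ``at most one selected pair'': it has to be read as a constraint on the actual selection $E_2$, which the local certificates of Appendix~B establish class by class, rather than as a property of the index sets themselves. Once it is read this way, the per-class inequality $\sum_{i\in I_\ell}\xi_i\le1$ is exactly the hypothesis. In the applications to bases $B$ and $C$ the coverage is not uniform, and the excess coverage of particular positions then appears as the extra terms $\xi_5+\xi_6+\xi_{19}+\xi_{20}$ and $2\eta_1$. The same computation gives that sharper form if, instead of the lower bound $d$, one keeps the exact coverage $\sum_{\ell\ni i}w_\ell$ of each position.
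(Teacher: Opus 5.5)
Your proof is correct and matches the paper's argument: sum the class inequalities $\sum_{i\in I_\ell}\xi_i\le 1$ with weights $w_\ell$, then exchange the order of summation and apply the coverage bound $\sum_{\ell\ni i}w_\ell\ge d$. The paper gives the same thing in one line; your version simply spells out both directions of the double count.
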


\begin{proof}
For indicators $\xi_i\in\{0,1\}$, sum the valid inequalities
$\sum_{i\in I_\ell}\xi_i\le1$ with weights $w_\ell$.
\end{proof}

\subsection*{B.2 Base $A$}

The free cells of base $A$ are $s{:}bcd$, $p{:}cd$, $q{:}bd$, $r{:}bc$,
and $t{:}a$. If $ta$ were occupied, exactly one selected pair would touch
column $a$. The other three pairs and both holes would then lie in
$b,c,d$. Summing $h(D)+p(D)\le2$ over $D=bc,bd,cd$ gives an upper bound
of six, while the two holes contribute four and those three pairs
contribute at least three. Hence $ta$ is a hole.

Call $p,q,r$ the core rows. A selected pair within a core row is
impossible: together with the ordinary squares in that row and row $t$
it gives six squares on two rows and four columns. Between core rows
$p,q$, the pair $pc+qb$ gives five squares on rows $p,q$ and columns
$a,b,c$; the pair $pd+qd$ gives seven squares on columns $a,d$ and five
rows. Thus the only core pairs, including their cyclic images, are
\begin{equation}\label{eq:core-pairs}
pc+qd,\quad pd+qb,\quad pc+rb,\quad pd+rc,\quad qb+rc,\quad qd+rb.
\end{equation}
All six are equivalent under simultaneous permutations of $(p,b)$,
$(q,c)$, and $(r,d)$.

Normalize one core pair to $F=pc+qd$. Of the other pairs
in~\eqref{eq:core-pairs}, only three are disjoint from $F$. The pair
$pd+qb$ gives six squares on rows $p,q$ and columns $a,b,c,d$. The pair
$pd+rc$ gives six squares on columns $c,d$ and rows $p,q,r,t$. The
remaining possibility $qb+rc$ is excluded by
\begin{equation}\label{eq:prod-A6}
(pa)(qb+rc)-(ra)(pc+qd)-(pb)(qa)+(qa)(rd)=0.
\end{equation}
Hence at most one core pair is selected.

All four pairs then lie in rows $s,p,q,r$. At least three therefore
touch $s$. Since row $s$ has just three free cells, exactly three pairs
use one cell of $s$ each, and the fourth is a core pair. Normalize that
pair to $F=pc+qd$. The unused core cells are $pd,qb,rb,rc$; exactly one
will be the second hole.

The partner of $sb$ cannot be $qb$ or $rb$: a same-column pair in
column $b$, together with the ordinary squares in columns $a,b$, gives
seven squares on five rows. Nor can its partner be $rc$, because
\begin{equation}\label{eq:prod-A7}
(sa)(pb)-(pa)(sb+rc)+(ra)(pc+qd)-(qa)(rd)=0.
\end{equation}
Thus $sb+pd$ is selected. Now $sc$ cannot pair with $rc$, by the
same-column argument on $a,c$, so its partner is $qb$ or $rb$. If $sd$
pairs with $rb$, that pair and $sb+pd$ give six squares on rows
$s,p,r,t$ and columns $b,d$. If $sd$ pairs with $qb$, then $sc$ must
pair with $rb$, and
\begin{equation}\label{eq:prod-A8}
(sa)(qc)-(qa)(sc+rb)+(ra)(sd+qb)-(sa)(rd)=0
\end{equation}
excludes this choice. Hence $sd+rc$ is selected and $sc+ub$ is selected
for $u=q$ or $u=r$. Both final possibilities are covered by
\begin{equation}\label{eq:prod-A9}
(sa)(pc+qd)-(qa)(sd+rc)-(pa)(sc+ub)+(qc)(ra)+(pb)(ua)=0.
\end{equation}
The four rectangle cancellations are on row pairs $s,p$, $s,q$, $q,r$,
and $p,u$. In~\eqref{eq:prod-A6}--\eqref{eq:prod-A9} all factors are
displayed forms, and collecting formal products leaves a nonzero
coefficient vector. Lemma~\ref{lem:product-relation} therefore excludes
base $A$.

\subsection*{B.3 Base $B$}

The free cells are $sb$, $sc$, $sd$, $pc$, $pd$, $qa$, $qd$, $ra$, $rc$, and $tb$. The only base
relabeling used is $\sigma_B:(q,r;c,d)\mapsto(r,q;d,c)$, with the other
rows and columns fixed. The fifteen single-pair orbits below have total
size $25$. Their complement is exactly the following $20$ indices.

{\small
\begin{center}
\begin{tabular}{@{}clcl@{}}
\toprule
$i$ & pair & $i$ & pair\\
\midrule
$1$ & $sb+sc$ & $11$ & $sd+qa$\\
$2$ & $sb+sd$ & $12$ & $sd+qd$\\
$3$ & $sb+pc$ & $13$ & $sd+rc$\\
$4$ & $sb+pd$ & $14$ & $sd+tb$\\
$5$ & $sb+qd$ & $15$ & $pc+qd$\\
$6$ & $sb+rc$ & $16$ & $pc+ra$\\
$7$ & $sc+qd$ & $17$ & $pd+qa$\\
$8$ & $sc+ra$ & $18$ & $pd+rc$\\
$9$ & $sc+rc$ & $19$ & $qa+rc$\\
$10$ & $sc+tb$ & $20$ & $qd+ra$\\
\bottomrule
\end{tabular}
\end{center}}

\noindent
Forbidden single pairs, with orbit size under $\sigma_B$ and certificate:

{\small
\begin{center}
\begin{tabular}{@{}lclclc@{}}
\toprule
pair & size & certificate & pair & size & certificate\\
\midrule
$sb+qa$ & $2$ & $T_{spq,ab}$ & $pc+qa$ & $2$ & $T_{pq,abc}$\\
$sb+tb$ & $1$ & $T_{spt,ab}$ & $pc+rc$ & $2$ & $T_{pqr,bc}$\\
$sc+sd$ & $1$ & $T_{st,acd}$ & $pc+tb$ & $2$ & $T_{pt,abc}$\\
$sc+pc$ & $2$ & $T_{spt,ac}$ & $qa+qd$ & $2$ & $T_{qt,acd}$\\
$sc+pd$ & $2$ & \eqref{eq:cert-B14} & $qa+ra$ & $1$ & $T_{pqr,ab}$\\
$sc+qa$ & $2$ & $T_{sqt,ac}$ & $qa+tb$ & $2$ & $T_{qt,abc}$\\
$pc+pd$ & $1$ & $T_{pt,acd}$ & $qd+rc$ & $1$ & $T_{qr,bcd}$\\
 &  &  & $qd+tb$ & $2$ & $T_{qt,bcd}$\\
\bottomrule
\end{tabular}
\end{center}}

\noindent
Any two selected pairs sharing a cell are incompatible by simplicity.
The following two-pair exclusions, together with their $\sigma_B$-images,
are used for the counting cover. The action on indices is
\[
\sigma_B=(1\ 2)(3\ 4)(5\ 6)(7\ 13)(8\ 11)(9\ 12)(10\ 14)(15\ 18)(16\ 17)(19\ 20).
\]

{\small
\begin{center}
\begin{tabular}{@{}clc@{}}
\toprule
indices & size & certificate\\
\midrule
$(3,8)$ & $2$ & $S(r,p,s;c,a,b)$\\
$(3,9)$ & $2$ & $T_{spqr,bc}$\\
$(3,10)$ & $2$ & $T_{spqt,bc}$\\
$(5,10)$ & $2$ & $S(q,t,s;b,d,c)$\\
$(5,11)$ & $2$ & $E(s,t,q;a,c,b,d)$\\
$(5,13)$ & $2$ & $S(r,q,s;d,c,b)$\\
$(5,14)$ & $2$ & $T_{sqrt,bd}$\\
$(7,11)$ & $2$ & $E(s,p,q;a,b,c,d)$\\
$(7,13)$ & $1$ & $T_{sqrt,cd}$\\
$(7,14)$ & $2$ & $S(t,q,s;d,b,c)$\\
$(7,19)$ & $2$ & \eqref{eq:cert-B15}\\
$(8,11)$ & $1$ & \eqref{eq:cert-B16}\\
$(8,12)$ & $2$ & \eqref{eq:cert-B17}\\
$(8,19)$ & $2$ & $T_{sqrt,ac}$\\
$(9,12)$ & $1$ & $T_{sqrt,cd}$\\
$(9,20)$ & $2$ & \eqref{eq:cert-B18}\\
$(15,17)$ & $2$ & $T_{pq,abcd}$\\
$(15,18)$ & $1$ & $T_{pqrt,cd}$\\
$(15,19)$ & $2$ & \eqref{eq:cert-B19}\\
$(16,17)$ & $1$ & \eqref{eq:cert-B20}\\
$(16,19)$ & $2$ & $T_{pqrt,ac}$\\
$(19,20)$ & $1$ & $T_{qr,abcd}$\\
\bottomrule
\end{tabular}
\end{center}}

The polynomial certificates are
\begin{align}
(sa)(tc)-(ta)(sc+pd)+(pa)(td)&=0,\label{eq:cert-B14}\\
(sa)(tc)-(ta)(sc+qd)+(td)(qa+rc)-(rd)(tc)&=0,\label{eq:cert-B15}\\
(qc)(ta)-(tc)(sd+qa)+(td)(sc+ra)-(rd)(ta)&=0,\label{eq:cert-B16}\\
(qc)(td)-(tc)(sd+qd)+(td)(sc+ra)-(rd)(ta)&=0,\label{eq:cert-B17}\\
(sa)(tc)-(ta)(sc+rc)+(tc)(qd+ra)-(qc)(td)&=0,\label{eq:cert-B18}\\
(pa)(qb)-(pb)(qa+rc)+(rb)(pc+qd)-(qb)(rd)&=0,\label{eq:cert-B19}\\
(pb)(qc)-(qb)(pc+ra)+(rb)(pd+qa)-(pb)(rd)&=0.\label{eq:cert-B20}
\end{align}
Each identity vanishes by~\eqref{eq:cell-product}, and collecting formal
products leaves a nonzero coefficient vector.

In an irreducible presentation each of the following six classes contains
at most one selected index, with the indicated weights.

{\small
\begin{center}
\begin{tabular}{@{}clc@{}}
\toprule
class & weight & indices\\
\midrule
$B_1$ & $1$ & $\{1,2,3,4,5,6\}$\\
$B_2$ & $1$ & $\{2,4,5,11,12,14\}$\\
$B_3$ & $1$ & $\{5,6,7,10,13,14\}$\\
$B_4$ & $1$ & $\{1,3,6,8,9,10\}$\\
$B_5$ & $1$ & $\{7,8,9,11,12,13,19,20\}$\\
$B_6$ & $2$ & $\{15,16,17,18,19,20\}$\\
\bottomrule
\end{tabular}
\end{center}}

Writing $\xi_i$ for the selection indicators, the weighted sum of these
six left sides is exactly
\[
2\sum_{i=1}^{20}\xi_i+\xi_5+\xi_6+\xi_{19}+\xi_{20}.
\]
The sum of the right sides is $7$. Lemma~\ref{lem:counting-cover} therefore
gives $2|E_2|\le7$, contradicting $|E_2|=4$.

The pairs of indices in each class whose cell supports are disjoint are
as follows; every other pair in the class shares a cell. Each listed pair
is one of the two-pair exclusions above, or its image under $\sigma_B$.
\[
\begin{aligned}
B_1&:\ \emptyset,\\
B_2&:\ (4,11),\ (4,12),\ (4,14),\ (5,11),\ (5,14),\\
B_3&:\ (5,10),\ (5,13),\ (5,14),\ (6,7),\ (6,10),\ (6,14),\ (7,13),\ (7,14),\ (10,13),\\
B_4&:\ (3,8),\ (3,9),\ (3,10),\ (6,8),\ (6,10),\\
B_5&:\ (7,11),\ (7,13),\ (7,19),\ (8,11),\ (8,12),\ (8,13),\ (8,19),\ (9,11),\\
&\quad (9,12),\ (9,20),\ (11,20),\ (12,19),\ (13,20),\ (19,20),\\
B_6&:\ (15,17),\ (15,18),\ (15,19),\ (16,17),\ (16,18),\\
&\quad (16,19),\ (17,20),\ (18,20),\ (19,20).
\end{aligned}
\]
The weighted membership multiplicities of indices $1,\dots,20$ are
$2,2,2,2,3,3,2,2,2,2,2,2,2,2,2,2,2,2,3,3$, and the total class weight
is seven.

\subsection*{B.4 Base $C$}

The free cells are $sc$, $sd$, $pb$, $pd$, $qa$, $qd$, $rb$, $rc$, $ta$, and $tc$. Use the two commuting
involutions
\[
\sigma_C:\ (p\ q)(r\ t)\ \text{on rows and}\ (a\ b)\ \text{on columns},
\]
\[
\tau_C:\ (p\ r)(q\ t)\ \text{on rows and}\ (c\ d)\ \text{on columns}.
\]
Their four images suffice. The seven single-pair orbits below have total
size $20$. Their complement is exactly the following $25$ indices.

{\small
\begin{center}
\begin{tabular}{@{}clclcl@{}}
\toprule
$i$ & pair & $i$ & pair & $i$ & pair\\
\midrule
$1$ & $sc+sd$ & $10$ & $pb+pd$ & $19$ & $qd+rc$\\
$2$ & $sc+pd$ & $11$ & $pb+qd$ & $20$ & $qd+ta$\\
$3$ & $sc+qd$ & $12$ & $pb+rc$ & $21$ & $rb+rc$\\
$4$ & $sc+rb$ & $13$ & $pd+qa$ & $22$ & $rb+tc$\\
$5$ & $sc+ta$ & $14$ & $pd+qd$ & $23$ & $rc+ta$\\
$6$ & $sd+pb$ & $15$ & $pd+rb$ & $24$ & $rc+tc$\\
$7$ & $sd+qa$ & $16$ & $pd+tc$ & $25$ & $ta+tc$\\
$8$ & $sd+rc$ & $17$ & $qa+qd$ &  & \\
$9$ & $sd+tc$ & $18$ & $qa+tc$ &  & \\
\bottomrule
\end{tabular}
\end{center}}

\noindent
Forbidden single pairs:

{\small
\begin{center}
\begin{tabular}{@{}clcc@{}}
\toprule
pair & size & certificate\\
\midrule
$sc+pb$ & $4$ & $T_{sp,abc}$\\
$sc+rc$ & $4$ & $T_{spr,ac}$\\
$pb+qa$ & $2$ & $T_{pq,abc}$\\
$pb+rb$ & $2$ & $T_{spr,ab}$\\
$pb+ta$ & $2$ & $T_{spt,ab}$\\
$pb+tc$ & $4$ & $T_{pqt,bc}$\\
$pd+rc$ & $2$ & $T_{pr,acd}$\\
\bottomrule
\end{tabular}
\end{center}}

Together with shared-cell conflicts, the following local two-pair
exclusions are sufficient for every counting class. The generators act
on the indices by
\begin{align*}
\sigma_C&=(2\ 3)(4\ 5)(6\ 7)(8\ 9)(10\ 17)(11\ 13)\\
&\qquad(12\ 18)(15\ 20)(16\ 19)(21\ 25)(22\ 23),\\
\tau_C&=(2\ 8)(3\ 9)(4\ 6)(5\ 7)(10\ 21)(11\ 22)\\
&\qquad(12\ 15)(13\ 23)(14\ 24)(16\ 19)(17\ 25)(18\ 20).
\end{align*}
Fixed indices are omitted.

{\small
\begin{center}
\begin{tabular}{@{}clc@{}}
\toprule
indices & size & certificate\\
\midrule
$(1,10)$ & $4$ & $T_{sp,abcd}$\\
$(1,12)$ & $4$ & \eqref{eq:cert-C21}\\
$(2,6)$ & $4$ & $T_{sp,abcd}$\\
$(2,7)$ & $4$ & $S(q,p,s;d,a,c)$\\
$(2,11)$ & $4$ & $S(s,q,p;d,c,b)$\\
$(2,12)$ & $4$ & \eqref{eq:cert-C22}\\
$(2,21)$ & $4$ & \eqref{eq:cert-C23}\\
$(4,6)$ & $2$ & \eqref{eq:cert-C24}\\
$(4,10)$ & $4$ & \eqref{eq:cert-C25}\\
$(4,12)$ & $4$ & $T_{spqr,bc}$\\
$(4,23)$ & $4$ & $S(t,s,r;c,a,b)$\\
$(4,24)$ & $4$ & $T_{sqrt,bc}$\\
$(10,21)$ & $2$ & $T_{pr,abcd}$\\
$(11,13)$ & $2$ & $T_{pq,abcd}$\\
$(11,15)$ & $4$ & $T_{pqrt,bd}$\\
$(11,16)$ & $4$ & $S(t,q,p;d,c,b)$\\
$(12,15)$ & $2$ & $T_{pr,abcd}$\\
$(16,19)$ & $1$ & $T_{pqrt,cd}$\\
\bottomrule
\end{tabular}
\end{center}}

The polynomial certificates are
\begin{align}
(sa)(rd)-(ra)(sc+sd)+(sa)(pb+rc)-(sb)(pa)&=0,\label{eq:cert-C21}\\
(sa)(pb+rc)-(sb)(pa)-(ra)(sc+pd)+(pa)(rd)&=0,\label{eq:cert-C22}\\
(sa)(rb+rc)-(ra)(sc+pd)-(sb)(ra)+(pa)(rd)&=0,\label{eq:cert-C23}\\
(sa)(pc)-(pa)(sc+rb)+(ra)(sd+pb)-(sa)(rd)&=0,\label{eq:cert-C24}\\
(sa)(pc)-(pa)(sc+rb)+(ra)(pb+pd)-(pa)(rd)&=0.\label{eq:cert-C25}
\end{align}
As before, expansion by~\eqref{eq:cell-product} proves each equation, and
each is a formally nonzero relation between products of distinct displayed
forms.

Each of the following eleven classes contains at most one selected index.

{\small
\begin{center}
\begin{tabular}{@{}clc@{}}
\toprule
class & weight & indices\\
\midrule
$C_1$ & $1$ & $\{1,2,4,6,10,12,15,21\}$\\
$C_2$ & $1$ & $\{1,4,6,8,10,12,15,21\}$\\
$C_3$ & $1$ & $\{1,5,7,9,17,18,20,25\}$\\
$C_4$ & $1$ & $\{1,3,5,7,17,18,20,25\}$\\
$C_5$ & $1$ & $\{1,2,3,6,7\}$\\
$C_6$ & $1$ & $\{3,11,13,14,17,19,20\}$\\
$C_7$ & $1$ & $\{2,10,11,13,14,15,16\}$\\
$C_8$ & $1$ & $\{11,13,14,16,19\}$\\
$C_9$ & $1$ & $\{9,16,18,22,23,24,25\}$\\
$C_{10}$ & $1$ & $\{8,12,19,21,22,23,24\}$\\
$C_{11}$ & $1$ & $\{4,5,8,9,22,23,24\}$\\
\bottomrule
\end{tabular}
\end{center}}

If $\eta_i$ are the selection indicators, the sum of the eleven left sides
is exactly $3\sum_{i=1}^{25}\eta_i+2\eta_1$. Consequently
$3|E_2|\le11$, contradicting four selected pairs. Index $1$ occurs in five
classes and every other index occurs in three.

The pairs of indices in each class whose cell supports are disjoint are
as follows. Each listed pair is one of the two-pair exclusions above, or
its image under $\sigma_C$ or $\tau_C$.
\[
\begin{aligned}
C_1&:\ (1,10),\ (1,12),\ (1,15),\ (1,21),\ (2,6),\ (2,12),\ (2,21),\\
&\quad (4,6),\ (4,10),\ (4,12),\ (6,15),\ (6,21),\ (10,21),\ (12,15),\\
C_2&:\ (1,10),\ (1,12),\ (1,15),\ (1,21),\ (4,6),\ (4,8),\ (4,10),\ (4,12),\\
&\quad (6,15),\ (6,21),\ (8,10),\ (8,15),\ (10,21),\ (12,15),\\
C_3&:\ (1,17),\ (1,18),\ (1,20),\ (1,25),\ (5,7),\ (5,9),\ (5,17),\ (5,18),\\
&\quad (7,20),\ (7,25),\ (9,17),\ (9,20),\ (17,25),\ (18,20),\\
C_4&:\ (1,17),\ (1,18),\ (1,20),\ (1,25),\ (3,7),\ (3,18),\ (3,25),\ (5,7),\\
&\quad (5,17),\ (5,18),\ (7,20),\ (7,25),\ (17,25),\ (18,20),\\
C_5&:\ (2,6),\ (2,7),\ (3,6),\ (3,7),\\
C_6&:\ (3,13),\ (11,13),\ (13,19),\ (13,20),\\
C_7&:\ (2,11),\ (11,13),\ (11,15),\ (11,16),\\
C_8&:\ (11,13),\ (11,16),\ (13,19),\ (16,19),\\
C_9&:\ (9,23),\ (16,23),\ (18,23),\ (22,23),\\
C_{10}&:\ (8,22),\ (12,22),\ (19,22),\ (22,23),\\
C_{11}&:\ (4,8),\ (4,9),\ (4,23),\ (4,24),\ (5,8),\ (5,9),\ (5,22),\ (5,24),\\
&\quad (8,22),\ (9,23),\ (22,23).
\end{aligned}
\]

This completes the three-base analysis: no irreducible total-$14$
augmentation exists over $A$, $B$, or $C$.
\endgroup

\clearpage
\section*{Data and computational reproducibility}
\label{sec:reproducibility}

The code and data supporting the computational results in this paper are
publicly available in the accompanying repository \cite{reproducibility}:
\begin{center}
{\hypersetup{urlcolor=blue}\color{blue}\url{https://github.com/johanlofberg/biquadratics/tree/main/second_order_zarankiewicz_numbers}}.
\end{center}
The repository contains explicit witnesses, exhaustive-search records,
and independent verification procedures, together with documentation
linking the reported numerical claims to their supporting computations.

\section*{Acknowledgments}

This work was supported by the Jiangsu Provincial Scientific Research Center of Applied Mathematics (Grant No. BK20233002).
During the preparation of this work, the authors used OpenAI GPT-5.6 Sol for some
proof exploration, hypothesis generation, and code development for proof verification. The
authors subsequently reviewed, checked, and extended the generated material as needed and
take full responsibility for the content of the publication.

\end{document}